\documentclass[12pt,letterpaper,twoside,english]{amsart}
\usepackage[T1]{fontenc}
\usepackage[latin9]{inputenc}
\usepackage{mathrsfs}
\usepackage[headings]{fullpage}
\usepackage{comment}
\usepackage{amsthm}
\usepackage{amstext}
\usepackage{amssymb}
\usepackage{tikz}
\usepackage[bbgreekl]{mathbbol}
\usepackage{amsfonts}
\DeclareSymbolFontAlphabet{\mathbb}{AMSb}
\DeclareSymbolFontAlphabet{\mathbbl}{bbold}
\DeclareFontFamily{U}{wncyss}{}
\DeclareFontShape{U}{wncyss}{m}{n}{<->wncyss8}{}
\DeclareSymbolFont{mcy}{U}{wncyss}{m}{n}
\DeclareMathSymbol{\ra}{\mathord}{mcy}{"61}
\DeclareMathSymbol{\rb}{\mathord}{mcy}{"62}
\DeclareMathSymbol{\rd}{\mathord}{mcy}{"64}
\DeclareMathSymbol{\rg}{\mathord}{mcy}{"67}
\usepackage{graphicx}
\usepackage{setspace}
\usepackage{esint}
\usepackage{enumerate}
\usepackage{amscd,color}

\makeatletter

\pdfpageheight\paperheight
\pdfpagewidth\paperwidth


\numberwithin{equation}{section}
\numberwithin{figure}{section}
\theoremstyle{plain}
\newtheorem{thm}{\protect\theoremname}[section]
  \theoremstyle{definition}
  \newtheorem{example}[thm]{\protect\examplename}
  \theoremstyle{remark}
  \newtheorem{rem}[thm]{\protect\remarkname}
  \theoremstyle{plain}
  \newtheorem{prop}[thm]{\protect\propositionname}
  \theoremstyle{plain}
  \newtheorem{cor}[thm]{\protect\corollaryname}
  \theoremstyle{definition}
  \newtheorem{defn}[thm]{\protect\definitionname}
  \theoremstyle{plain}
  \newtheorem{lem}[thm]{\protect\lemmaname}
 \theoremstyle{remark}
  \newtheorem{conv}[thm]{\protect\conventionname}

\theoremstyle{plain}
\newtheorem{thmx}{Theorem}

\theoremstyle{plain}
\newtheorem{corx}{Corollary}

\usepackage{enumitem}
\usepackage{amsmath}
\usepackage{amsfonts}
\usepackage{amssymb}
\usepackage[all]{xy}
\usepackage{array}

\newcommand{\longhookrightarrow}{\lhook\joinrel\longrightarrow}

\def\RR{\mathbb{R}}
\def\CC{\mathbb{C}}
\def\QQ{\mathbb{Q}}
\def\PP{\mathbb{P}}
\def\ZZ{\mathbb{Z}}
\def\HH{\mathbb{H}}
\def\VV{\mathbb {V}}

\def\fx{\mathfrak{X}}
\def\cE{\mathcal{E}}
\def\cf{\mathcal{F}}
\def\cx{\mathcal{X}}

\def\ch{\mathcal{H}}

\def\cy{\mathcal{Y}}

\def\cn{\mathcal{N}}
\def\cb{\mathcal{B}}
\def\tcb{\tilde{\mathcal{B}}}
\def\co{\mathcal{O}}
\def\cv{\mathcal{V}}
\def\cu{\mathcal{U}}

\def\ppsi{{}^p \psi}
\def\pphi{{}^p \phi}

\def\gr{\mathrm{Gr}}

\def\can{\mathtt{can}}
\def\gy{\mathtt{gy}}
\def\sp{\mathtt{sp}}

\def\pha{\mathrm{ph}}
\def\van{\mathrm{van}}
\def\lm{\mathrm{lim}}

\def\IH{\mathrm{IH}}
\def\IC{\mathrm{IC}^{\bullet}}

\def\uw{\underline{w}}
\def\um{\underline{m}}
\def\utw{\tilde{\uw}}
\def\uf{\underline{f}}
\def\uo{\underline{0}}
\def\uz{\underline{z}}

\def\ub{\underline{\beta}}
\def\fb{\mathfrak{B}}
\def\fF{\mathfrak{F}}
\def\fy{\mathfrak{Y}}
\def\fz{\mathfrak{Z}}
\def\sfm{\sigma_f^{\mathrm{min}}}
\def\disc{\Delta}
\def\NP{\mathbbl{\Delta}}
\def\h{\mathsf{h}}
\def\NNP{\hat{\NP}}

\let\oldtocsection=\tocsection
 
\let\oldtocsubsection=\tocsubsection
 
\let\oldtocsubsubsection=\tocsubsubsection
 
\renewcommand{\tocsection}[2]{\hspace{0em}\oldtocsection{#1}{#2}}
\renewcommand{\tocsubsection}[2]{\hspace{1em}\oldtocsubsection{#1}{#2}}
\renewcommand{\tocsubsubsection}[2]{\hspace{2em}\oldtocsubsubsection{#1}{#2}}

\setcounter{tocdepth}{2}

\makeatother

  \providecommand{\corollaryname}{Corollary}
  \providecommand{\definitionname}{Definition}
  \providecommand{\examplename}{Example}
  \providecommand{\lemmaname}{Lemma}
  \providecommand{\propositionname}{Proposition}
  \providecommand{\remarkname}{Remark}
\providecommand{\theoremname}{Theorem}

\providecommand{\conventionname}{Convention}

\begin{document}

\title[Hodge theory of degenerations, II]{Hodge theory of degenerations, (II): \\ Vanishing cohomology and geometric applications}

\author{Matt Kerr}
\address{Washington University in St. Louis, Department of Mathematics and Statistics, St. Louis, MO 63130-4899}
\email{matkerr@math.wustl.edu}

 \author{Radu Laza}
 \address{Stony Brook University, Department of Mathematics, Stony Brook, NY 11794-3651}
\email{rlaza@math.stonybrook.edu}

\thanks{Research of the first author is supported in part by NSF grant DMS-2101482. Research of the second author is supported in part by NSF grant DMS-2101640.}

\bibliographystyle{amsalpha}

\maketitle

\begin{abstract}
We study the weighted spectrum and vanishing cohomology for several classes of isolated hypersurface singularities, and how they contribute to the limiting mixed Hodge structure of a smoothing.  Applications are given to several types of singularities arising in KSBA and GIT compactifications and mirror symmetry, including nodes on odd-dimensional hypersurfaces, $k$-log-canonical and $k$-rational singularities, and singularities with Calabi-Yau tail.
\end{abstract}
\setcounter{tocdepth}{1}
\tableofcontents

\section*{Introduction}
When the period map is used to construct or interpret compactifications of a moduli space of some geometric objects, it is the interplay between asymptotic and specialized Hodge-theoretic invariants which allows us to map singular geometry to boundary components.  In this article, we continue the study of this interplay begun in \cite{KL1}, turning our focus from generalizations of the Clemens--Schmid sequence in [op. cit.] to the \emph{weighted spectra of singularities}, with a focus on singularity types appearing in GIT and MMP.  Our goals here are threefold.  First, we want to review and develop the calculus of weighted spectra and their relationship to birational and toric geometry; this is the subject of an absolutely vast literature, and we hope the resulting ``cheat sheet'' is useful to other researchers.  Second, it is through these spectra that the singularities contribute to the cohomology of the vanishing cycles; here we discuss several geometrically motivated examples in the isolated singularities case. The more intricate case of non-isolated singularities will be discussed in \cite{KL3}.  Finally, the influence of the vanishing cohomology on the limiting MHS of a degeneration is also not always straightforward, and we offer several results of a general nature (here and in [op.~cit.]) that resolve ambiguities in the Clemens--Schmid and vanishing-cycle exact sequences. 

\begin{conv}
References to \cite{KL1}, henceforth referred to as \textbf{Part I}, will be written (I.*.*) resp. ``Theorem I.*.*'' etc.
\end{conv}

\subsection*{Set-up and Overview of results} Throughout, we shall consider
\begin{multline}\label{f}
 f\colon \cx\to \Delta\; \textit{ a projective morphism from an irreducible complex analytic space of} \\ \textit{dim. } n+1 \textit{ to the disk, which extends to a projective morphism of quasi-projective varieties.}
\end{multline}

\noindent We write $X_t(=f^{-1}(t))$ and $X_0$ for the general and special fibers respectively. If $\cx$ and $f|_{\cx\setminus X_0}$ are smooth, and $\text{sing}(X_0)=\{x\}$, then we are in the setting of an \emph{isolated hypersurface singularity}, and the Clemens--Schmid and vanishing-cycle exact sequences (cf. \eqref{I1}-\eqref{I2} below) reduce to isomorphisms $H^k(X_0)\cong H^k_{\lm}(X_t)$ for $k\neq n,n+1$ and the exact sequence
\begin{equation}\label{vc}
0\to H^n(X_0)\overset{\sp}{\to}H^n_{\lm}(X_t)\overset{\can}{\to} V_f\to H^{n+1}_{\pha}(X_0)\to 0
\end{equation}
of MHS, where the image of $\sp$ is the monodromy invariants $H^n(X_t)^T$ and the phantom cohomology $H^{n+1}_{\pha}(X_0):=\ker\{H^{n+1}(X_0)\twoheadrightarrow H^{n+1}_{\lm}(X_t)\}$ is pure of weight $n+1$.  Writing $T=T^{\text{ss}}e^N$ for the Jordan decomposition, the mixed Hodge structure and $T^{\text{ss}}$-action (with eigenvalues $e^{2\pi i\lambda}$) decompose the vanishing cohomology $V_f\otimes\CC\cong \oplus_{\lambda\in[0,1)\cap \QQ}\oplus_{p,q\in\ZZ^2}V_{f,\lambda}^{p,q},$ and we define the \emph{weighted spectrum} 
\begin{equation}\label{msp}
\tilde{\sigma}_f\; :=\;\sum_{\lambda,p,q} \dim(V_{f,\lambda}^{p,q})[(p+\lambda,p+q)]\;\in\; \ZZ[\QQ\times\ZZ]
\end{equation}
and \emph{spectral minimum} $\sigma_f^{\text{min}}:=\text{min}\{p+\lambda\mid V_{f,\lambda}^{p,q}\neq \{0\} \text{ for some }q\}$ of $f$ (at $x$) accordingly.  

\medskip

Writing $f$ as a polynomial in suitable local coordinates at $x$, we may compute these invariants by means of the Brieskorn--Steenbrink residue theory (Theorems \ref{T2.2}, \ref{t5.2a}, \ref{t5.2b}), the combinatorics of the Newton polytope $\NP$ of $f$ (equations \eqref{eq2.5.2}-\eqref{eq2.5.6}), and the Sebastiani--Thom formula expressing the weighted spectrum of a join $f\oplus g$ as the convolution $\tilde{\sigma}_f * \tilde{\sigma}_g$ of weighted spectra (Theorem \ref{t6}).  This calculus owes its existence to work of several authors, especially Danilov, M. Saito, Scherk, Steenbrink, and Varchenko \cite{Da,Sa6,Sa3,St1,steenbrinkvan,ScSt,Va4}.  We reformulate and streamline it here in the course of treating several examples, two of which may be summarized as follows.  The first pertains to certain singularities in dimension $n=2$ that are worse than log-canonical (essentially higher-dimensional analogues of the cusp singularity for curves), and the exceptional component or ``tail'' acquired by the singular fiber under a variant of semistable reduction which occurs in the passage from GIT to KSBA compactifications of moduli (e.g.~see \cite{log2} and \cite{GPSZ} for some concrete applications in this direction):  

\begin{thmx}[cf. Theorem \ref{TCY}] \label{t-a}
The isolated quasi-homogeneous surface singularities with pure $K3$ tail (Def. \ref{def2.2}) are precisely the 14 Dolgachev singularities, the 6 quadrilateral singularities, and 2 trimodal singularities \textup{(}$V_{15}$ and $N_{16}$\textup{)}.
\end{thmx}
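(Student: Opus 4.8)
The plan is to characterize the pure-$K3$-tail condition numerically in terms of the mixed spectrum, and then to run this condition against the classification of quasi-homogeneous (semi-quasi-homogeneous) surface singularities by modality, which is finite in each of the relevant ``corank'' regimes. First I would translate Definition \ref{def2.2}: a ``pure $K3$ tail'' should mean that after the relevant (weighted) semistable reduction the exceptional component is an honest $K3$ surface — equivalently, the part of the vanishing cohomology carrying the ``new'' weight-$2$ Hodge structure has the Hodge numbers $(h^{2,0},h^{1,1},h^{0,2})=(1,20,1)$ of a $K3$, and the $\lambda=0$ part (the unipotent/weight-graded piece contributing $H^2_{\lim}$) is exactly that of a $K3$. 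Via the Brieskorn--Steenbrink residue description (Theorems \ref{T2.2}, \ref{t5.2a}, \ref{t5.2b}) this becomes a purely combinatorial condition on $\tilde\sigma_f$: in the quasi-homogeneous case the spectral numbers are $\sum w_i - 1 + (\text{exponents of a monomial basis of the Milnor algebra, reweighted})$, and the $K3$-tail condition forces (i) $\sigma_f^{\min}$ to lie in a specific half-open interval — roughly $\sigma_f^{\min}\in(0,1]$ with the boundary behavior that produces a weight-$2$ (rather than weight-$0$ or $\geq 3$) contribution — and (ii) the spectral multiplicity at $\sigma_f^{\min}$ (and at the symmetric value $n+1-\sigma_f^{\min}$) to be $1$, matching $h^{2,0}(K3)=1$. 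This is essentially the statement that $f$ is ``$K3$-like'' in the Dolgachev--Pinkham--Demazure sense, i.e.\ that the log-canonical threshold is $1$ with a specific jumping behavior.

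Second, I would invoke the coarse structure of Arnol'd's classification: isolated quasi-homogeneous hypersurface singularities in three variables with $\operatorname{lct}=1$ (equivalently with Milnor-algebra socle degree equal to $\sum w_i - 1 \cdot$ something, i.e.\ the ``elliptic/hyperbolic'' borderline) are exactly the parabolic and hyperbolic (and a handful of exceptional) families — the simple-elliptic $\tilde E_6,\tilde E_7,\tilde E_8$ are ruled out because they give an \emph{elliptic} tail, not a $K3$ tail, while the strictly log-canonical-threshold-$<1$ families (the $T_{p,q,r}$ hyperbolic unimodal ones, the quadrilateral/Dolgachev exceptional families, and the low trimodal ones) are the candidates. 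Concretely the $14$ exceptional unimodal (Dolgachev) singularities $E_{12},\dots,U_{12}$, the $6$ quadrilateral bimodal families $J_{3,0}$-type, and the trimodal list all have finite, explicitly tabulated weight systems; for each I would compute $\tilde\sigma_f$ from the weights (equations \eqref{eq2.5.2}--\eqref{eq2.5.6} for the Newton-polytope description, or directly from the monomial basis) and check conditions (i)--(ii). For all families \emph{not} on the list, either the spectral minimum falls outside the required interval (too-mild singularities, lct $=1$ with elliptic tail: the simple-elliptic and cusp/$T_{p,q,r}$ with $\tfrac1p+\tfrac1q+\tfrac1r>1$ cases, and the simple/unimodal ADE-type) or the socle/spectral multiplicity at the minimum exceeds $1$ (tetrahedral and higher-modality families, whose tail has $h^{2,0}>1$, i.e.\ is a surface of general type rather than a $K3$), and this rules them out.

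The main obstacle — and the step deserving the most care — is the two-sided nature of the verification: it is not enough to check the finitely many claimed singularities \emph{do} have $K3$ tails (a finite, if tedious, spectral computation using the already-established calculus); one must also show \emph{no other} quasi-homogeneous surface singularity qualifies. Since there are infinitely many weight systems, this requires an a priori bound reducing the search to finitely many families: here I would use monotonicity of the spectrum / semicontinuity (Varchenko's theorem, or the inclusion of Newton polytopes) to show that the $K3$-tail conditions (i)--(ii) force $\mu$, or equivalently the socle degree, into a bounded range — essentially, the tail being a single $K3$ caps the ``size'' of the vanishing cohomology of new weight $2$ and pins $\sigma_f^{\min}$ between the value for the worst-allowed ($U_{12}$/$N_{16}$-type) case and $1$ — so that only the parabolic, hyperbolic, and the enumerated exceptional/bi-/tri-modal families remain, and one then checks these by hand. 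Getting this bound clean, and correctly handling the borderline families where $\sigma_f^{\min}$ hits an endpoint of the interval (which decides elliptic-tail versus $K3$-tail versus general-type-tail), is where the real work lies; the rest is bookkeeping against Arnol'd's tables.
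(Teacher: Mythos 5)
There is a genuine gap at the very first step, and it is one the paper itself flags. You propose to translate the pure-$K3$-tail condition of Definition \ref{def2.2} into ``a purely combinatorial condition on $\tilde\sigma_f$,'' but that definition is geometric: it requires the exceptional divisor $\hat{\cE}$ of the weighted blow-up (after the $t\mapsto t^d$ base change) to be a quasi-smooth \emph{anticanonical} hypersurface in $\mathbb{WP}^{3}[1:\utw]$, and this is strictly stronger than anything the mixed spectrum can see. The spectral shadow of the definition is exactly the ``numerical CY tail'' condition introduced right after Definition \ref{def2.2} ($\left||\sigma_f|\cap(0,1)\right|=1$, with $|\sigma_f|\cap\ZZ=\emptyset$ in the pure case), and the remark following Theorem \ref{TCY} exhibits $f=x^4+y^4+z^3$ (the singularity $V_{18}'$, no.\ 2 on Yonemura's list) as a quasi-homogeneous surface singularity with numerical pure $K3$ tail whose actual tail is a ``fake $K3$'': $h^{2,0}(\hat{\cE})=1$ but $K_{\hat{\cE}}\neq\co_{\hat{\cE}}$. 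This example passes every test you impose --- unique spectral number in $(0,1)$, occurring with multiplicity one, no integer spectral numbers --- so your sieve would admit it and the classification would come out wrong. A secondary slip: the target Hodge numbers are not $(1,20,1)$, since $V_f\cong H^2(\hat{\cE}\setminus\hat{E})$ has dimension $\mu_f$ (between $10$ and $16$ for the singularities in the theorem), not $22$; insisting on $(1,20,1)$ would instead exclude everything.

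The paper's proof works directly with the geometric definition: triviality of $K_{\hat{\cE}}$ translates via adjunction into the weight identity \eqref{eq2.2d}, quasi-smoothness into \eqref{eq2.2e}, and --- this is the key finiteness input you were trying to manufacture from Arnol'd's tables and spectrum semicontinuity --- the anticanonical requirement forces $\hat{\cE}$ to be one of the $95$ families of $K3$ hypersurfaces in weighted projective $3$-spaces classified by Reid and Yonemura. One then discards from that finite list all weight $4$-tuples not of the form $(w_1,w_2,w_3,\tfrac1d)$ satisfying \eqref{eq2.2d}--\eqref{eq2.2e}, and separates pure from mixed by whether the genus $|\mathrm{int}(\Gamma_{\uw})\cap\ZZ^3|$ of $\hat{E}$ vanishes. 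To repair your argument you would have to replace the spectral criterion by the weight-vector criteria \eqref{eq2.2d}--\eqref{eq2.2e} (equivalently, verify $K_{\hat{\cE}}\cong\co_{\hat{\cE}}$ rather than $h^{2,0}(\hat{\cE})=1$), and you would still need an honest a priori finiteness statement; the bound on $\mu_f$ you gesture at follows once the tail is known to be a genuine $K3$, but extracting it from the numerical condition alone is precisely what the $V_{18}'$ example obstructs.
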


\noindent A second example concerns certain singularities (in arbitrary dimension), {\it $k$-log-canonical} singularities, arising in the work of Mustata and Popa on Hodge ideals \cite{MP}, which are better than log-canonical. For such singularities, we establish a tighter connection between the cohomology of the central fiber and the limit cohomology (compare \cite{KL1} for $k=0$). In fact, the analysis here singles out also the stronger notion of {\it $k$-rational} singularities (Definition \ref{def-krat}) for which even better results hold (compare \cite{KLS}  for $k=0$). 

\begin{thmx}[cf. Corollary \ref{cor2.4a}, Theorem \ref{thm2.5A}, $\S$\ref{sec-ex-klog}] \label{t-b}
\begin{enumerate}[label=\textup{({\roman*})},leftmargin=0.8cm]
\item[ ]
\item \textup{[isolated hypersurface singularity case]} $X_0$ has a $k$-log-canonical singularity at $x$ $\iff$ $\sigma_f^{\text{min}}\geq 1+k$.  In particular, $2k$-fold suspensions of log-canonical singularities are $k$-lc, as are singularities satisfying $\S$\textup{\ref{S2.5}(i)-(iii)} for which $(\tfrac{1}{k+1},\ldots,\tfrac{1}{k+1})$ lies in the Newton polyhedron of $f$.
\item \textup{[arbitrary (not necessarily isolated) case, but with $\cx$ still smooth]} If $X_0$ has $k$-log-canonical singularities, then the specialization maps $H^*(X_0)\overset{\sp}{\to} H^*_{\lim}(X_t)$ induce isomorphisms on the $\gr_F^i$ for $0\leq i\leq k$ \textup{(}in all degrees $*$\textup{)}.
\item \textup{[isolated $k$-rational case]} If $X_0$ has isolated hypersurface singularity with $\sigma_f^{\text{min}}> 1+k$ (in particular, $X_0$ is $k$-log canonical), it holds additionally that $\gr_F^p W_{n-1}H^n(X_0)=0$ for $0\le p\le k$.  \end{enumerate}
\end{thmx}

\begin{rem}\label{rem-non-iso}
Strictly speaking, we establish here Theorem \ref{t-b}(ii) only in the isolated singularity case. An earlier version of our manuscript included a proof of the non-isolated case as well; this material is now part of an expanded treatment \cite{KL3} of the non-isolated case. 
\end{rem}

\begin{rem}[{\it Recent developments on higher Du Bois singularities}]\label{Rem-kDB}
Since the posting of our first version of the manuscript, the theory of higher log-canonical singularities dramatically expanded (see esp. \cite{MOPW} and \cite{JKSY-duBois}). Namely, there is a general notion of {\it $k$-Du Bois singularities} extending naturally the definition of Du Bois singularities (the case $k=0$). In the case of hypersurface singularities, (1) $k$-Du Bois is equivalent to $k$-log canonical and (2) this can be characterized  numerically by a condition extending that of Theorem \ref{t-b}(i) to the possibly non-isolated case (see \cite[Thm. 1]{JKSY-duBois}). Furthermore, the second part of the Theorem \ref{t-b}, regarding the effect of $k$-log canonical (or equivalently $k$-Du Bois) singularities on the Hodge numbers in families  was generalized by second author and Friedman \cite{FL-DuBois} (see esp. \cite[Cor. 1.4]{FL-DuBois}). Finally, a concrete geometric application of this circle of ideas (beyond what is included here) is given in \cite{FL-Def}. 
\end{rem}
\begin{rem}[{\it Higher rational singularities}]\label{Rem-krat} Our analysis of the behavior of Hodge structure in families (esp. Corollary \ref{cor2.4a}) led us to the introduction of $k$-rational singularities (Definition \ref{def-krat}). Our definition here is purely numerical and applies a priori only to the case of isolated hypersurface singularities. Nonetheless, Theorem \ref{t-b}(iii) captures an important phenomenon occurring for this class of singularities, which subsequently led to a much more general theory \cite{FL-isolated,FL-DuBois}.  As in the case of higher Du Bois singularities, the study of higher rational knew rapid growth recently (see esp. \cite{FL-DuBois}, \cite{MP-Rat}, and Saito's appendix to \cite{FL-DuBois}). In particular, we note that the numerical definition given here agrees with the more general definition given subsequently (under the isolated hypersurface assumptions). 
\end{rem}

More generally, in order to deduce from spectral data results about specialization maps, LMHS types, or monodromy (see for example Corollary \ref{cor2.4b}), we need control over the phantom cohomology $H^*_{\pha}(X_0)$, especially in degree $*=n+1$.  In the isolated hypersurface singularity setting, one result in this direction is:

\begin{thmx}[cf. Proposition \ref{prop2.4b}]\label{t-c}
Assume that $\cx$ is smooth, and $X_0$ has a single, log-canonical singularity at $x$, not contained in the base-locus of $|K_{\cx}|$.  Then $h^{1,n}(H^{n+1}_{\pha}(X_0))=0$.
\end{thmx}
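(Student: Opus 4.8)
The plan is to reduce everything, via the vanishing-cycle sequence \eqref{vc}, to a statement about the top of the Hodge filtration on the vanishing cohomology $V_f$, and then to feed in the hypothesis on $|K_\cx|$ through the Brieskorn--Steenbrink residue calculus. Since $H^{n+1}_{\pha}(X_0)$ is pure of weight $n+1$, one has $h^{1,n}(H^{n+1}_{\pha}(X_0))=h^{n,1}(H^{n+1}_{\pha}(X_0))=\dim\gr_F^n H^{n+1}_{\pha}(X_0)$; and because \eqref{vc} is an exact sequence of mixed Hodge structures, applying the exact functor $\gr_F^n$ to it gives $\gr_F^n H^{n+1}_{\pha}(X_0)\cong\mathrm{coker}\bigl(\gr_F^n H^n_{\lm}(X_t)\xrightarrow{\gr_F^n\can}\gr_F^n V_f\bigr)$ (here $\gr_F^n H^n_{\lm}(X_t)=F^n H^n_{\lm}(X_t)$, as $F^{n+1}H^n_{\lm}=0$). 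The same computation with $\gr_F^0$ in place of $\gr_F^n$ already shows $h^{0,n+1}(H^{n+1}_{\pha}(X_0))=0$ with no extra hypothesis, because $\sfm\geq 1$ forces $\gr_F^0 V_f=0$; it is precisely the middle Hodge number that needs $|K_\cx|$. Thus the whole problem is to prove that $\gr_F^n\can$ is surjective.

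Next I would identify the target. By the symmetry $\alpha\mapsto n+1-\alpha$ of the spectrum \eqref{msp} together with the characterization $\sfm\geq 1$ of log-canonicity (Theorem~\ref{t-b}(i)), every spectral exponent of $f$ at $x$ lies in $[1,n]$. Hence $\gr_F^n V_f$---which by \eqref{msp} is spanned by the classes of spectral exponent in $[n,n+1)$---vanishes unless $\sfm=1$, and when $\sfm=1$ it is one-dimensional, spanned by the extreme class of exponent $n$, i.e.\ the socle of the Milnor (Jacobian) algebra of $f$ at $x$ (the extreme exponent occurring with multiplicity one, which is immediate from the monomial basis in the quasi-homogeneous case). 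We are thus reduced to the following: assuming $\sfm=1$, exhibit a single class in $H^n_{\lm}(X_t)$ whose image under $\can$ is nonzero in the one-dimensional space $\gr_F^n V_f$.

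For that I would choose $\omega\in H^0(\co_\cx(K_\cx))$ with $\omega(x)\neq 0$, so that near $x$ we may write $\omega=u\,dx_0\wedge\cdots\wedge dx_n$ with $u$ a local unit. The family of Gelfand--Leray forms $\omega/df\big|_{X_t}$ determines, through the canonical extension of the Hodge bundle $F^n\ch^n$, a class in $F^n H^n_{\lm}(X_t)=\gr_F^n H^n_{\lm}(X_t)$, and the residue theory of Brieskorn and Steenbrink (Theorems~\ref{T2.2}, \ref{t5.2a}, \ref{t5.2b}) expresses $\gr_F^n\can$ of this class in terms of the image of $[u]$ in the top graded piece of $\mathrm{Jac}(f)$; the point is that this image is nonzero since $u(x)=\omega(x)\neq 0$, and that $\sfm=1$ is exactly the condition placing the socle of $\mathrm{Jac}(f)$ at spectral exponent $n$, so that the top Hodge piece of $V_f$ receives a nonzero contribution from $\omega$. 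Granting this, $\gr_F^n\can\neq 0$, hence onto, hence $\gr_F^n H^{n+1}_{\pha}(X_0)=0$. The step I expect to be the real obstacle is exactly this last computation: faithfully tracking the global $(n+1)$-form $\omega$ through the canonical extension and the Brieskorn lattice, and verifying that non-vanishing of $\omega$ at the single point $x$---rather than control of its jets there, or the possibility that the class it defines factors through $\sp$ and is therefore annihilated by $\can$---genuinely suffices to make the pairing with the socle nondegenerate. The log-canonical hypothesis (through the du~Bois property of $X_0$ and the vanishing $\gr_F^0 V_f=0$ already exploited above) should be what excludes that last degeneration; if it does not, the fallback is to run the dual argument, pairing $\gr_F^n H^{n+1}_{\pha}(X_0)$ by Clemens--Schmid duality with a bottom-Hodge-piece in complementary degree whose vanishing is again forced by $\sfm\geq1$, with $|K_\cx|$ used only to realize that pairing by means of a nonzero top form.
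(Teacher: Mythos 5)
Your reduction is sound as far as it goes: purity and level of $H^{n+1}_{\pha}(X_0)$, exactness of $\gr_F^n$ applied to \eqref{vc}, and the observation that for a log-canonical singularity $\gr_F^n V_f$ is concentrated at spectral exponent exactly $n$ (hence vanishes unless $\sfm=1$) are all correct, and the idea of hitting $\gr_F^n V_f$ with the Gelfand--Leray class of a global $\omega\in H^0(K_{\cx})$ not vanishing at $x$ is a genuinely different route from the paper's (which passes to a log resolution, uses adjunction and the non-negativity of the lc discrepancies to identify $K_{\tilde{X}_0}(E)$ with an effective exceptional divisor, and then applies the Hodge Index Theorem on a surface section). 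But the proposal has two genuine gaps, both sitting exactly at the step you yourself flag as ``the real obstacle.'' First, the claim that $\dim \gr_F^n V_f\le 1$: the symmetry $\iota_n$ only gives $m_n=m_1$, and ``the extreme exponent occurs with multiplicity one'' cannot be read off from ``the monomial basis in the quasi-homogeneous case,'' because a log-canonical singularity with $\sfm=1$ need not be quasi-homogeneous (the cusps $T_{p,q,r}$ already are not), and for Newton-degenerate $f$ there is no monomial basis to appeal to. The statement is true --- it amounts to the fact that an isolated Gorenstein lc singularity has geometric genus at most one, i.e.\ $h^0(K_E)\le 1$ for the exceptional divisor of a log resolution --- but that is precisely what Step 3 (Hodge Index) of the paper's proof establishes, so it cannot be waved through.

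Second, the decisive nonvanishing of $\gr_F^n\can$ on your class: via \eqref{eq5.2.7} this is the assertion that the unit $[u\,d\uz]$ has nonzero image in $\gr_{\cv}^0\Omega_f$, equivalently that the maximal spectral exponent $n$ is realized by the form $d\uz$ itself. (Note this is the class of the \emph{unit}, not the socle --- you have the two ends of the $\cv$-filtration reversed: the socle of $\Omega_f$ sits at the minimal exponent $\sfm$, the unit at the maximal exponent $n+1-\sfm$; the ``image of $[u]$ in the socle'' would be zero whenever $\mu_f>1$.) This nonvanishing is M.~Saito's identification of $\sfm$ with the $\cv$-order of $1$ --- essentially the same fact the paper cites as $\sfm=\tilde{\alpha}_f$ in Prop.~\ref{prop2.4a}(i) --- but you neither prove nor cite it, and without it the argument does not close: a priori $[u\,d\uz]$ could lie in $\cv^{>0}\cb$ and contribute nothing to $\gr_F^n V_f$. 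Once that local statement is in hand, your separate worry that the class ``factors through $\sp$'' evaporates (an element with nonzero image under $\can$ is by definition not in $\ker(\can)=\mathrm{im}(\sp)$); the true role of the hypothesis that $x$ avoids the base locus of $|K_{\cx}|$ is only to guarantee that the local Brieskorn class is realized by a \emph{global} element of $F^nH^n_{\lm}(X_t)$, and that part of your plan is correctly conceived.
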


\noindent Via \eqref{vc} and $\S$\ref{S5.1}, this has the following consequence for degenerations of Calabi--Yau varieties:

\begin{corx}[cf. Corollaries \ref{cor2.5B1}-\ref{cor2.5B2}]\label{cor-d}
For $\cx$ smooth with Calabi--Yau $n$-fold fibers $\{X_t\}_{t \neq 0}$, and $X_0$ possessing a single \textup{(}convenient, nondegenerate\textup{)} singularity with Newton polytope $\NP\supset \ZZ_{>0}^{n+1}$, let $j$ be the least integer for which the $(j+1)$-skeleton of $\NP$ contains $(1,\ldots,1)$ \textup{(}except we put $j=0$ instead of $-1$ if this is a vertex of $\NP$\textup{)}. Then $h^{0,k}(H_{\lm}^n(X_t))=\delta_{jk}$.
\end{corx}

\noindent Some further results controlling $H_{\pha}^*$ pertain to the presence of multiple nodes:

\begin{thmx}[cf. Theorem \ref{t:schoen}]\label{t-d}
\begin{enumerate}[label=\textup{({\roman*})},leftmargin=0.8cm]
\item[ ]
\item If $n$ is odd, $\cx$ is a family \textup{(}with smooth total space\textup{)} of ample hypersurfaces in a smooth projective variety $\mathbf{P}$ satisfying Bott vanishing, and $X_0$ has only nodal singularities, then $H^{n+1}_{\pha}(X_0)$ has type $(\tfrac{n+1}{2},\tfrac{n+1}{2})$ and rank equal to the dimension of the cokernel of evaluation $H^0(\mathbf{P},K_{\mathbf{P}}(\tfrac{n+1}{2}X_0))\to\CC^{|\text{sing}(X_0)|}$ at the nodes.
\item If $n$ is even, $\cx$ has only nodal singularities, and $\text{sing}(\cx)\subset X_0$ \textup{(}while $X_0$ has possibly non-isolated singularities\textup{)}, then $\text{coker}\{H^n(X_0)\overset{\sp}{\to} H^n_{\lm}(X_t)^T\}$ and $W_n H^{n+1}_{\pha}(X_0)$ measure the failure of Clemens-Schmid; and the sum of their dimensions is bounded by $|\text{sing}(\cx)|$ \textup{(}with equality if $n=2$\textup{)}.
\end{enumerate}
\end{thmx}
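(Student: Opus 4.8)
\emph{Part (i).} The two assertions use rather different machinery; here is a plan for the first. The starting point is the global vanishing-cycle sequence together with the spectrum of a node. Since $\cx$ is smooth and $X_0$ has only nodes, $\phi_f\QQ_{\cx}$ is supported on $\mathrm{sing}(X_0)$ with one-dimensional stalk cohomology at each node, in a single degree, so one obtains the exact sequence of mixed Hodge structures
\[
0\to H^n(X_0)\overset{\sp}{\to}H^n_{\lm}(X_t)\overset{\can}{\to}\bigoplus_{x}V_{f,x}\to H^{n+1}(X_0)\to H^{n+1}_{\lm}(X_t)\to 0
\]
(sum over $x\in\mathrm{sing}(X_0)$, with $H^k(X_0)\cong H^k_{\lm}(X_t)$ for $k\neq n,n+1$). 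By Thom--Sebastiani applied to $n+1$ copies of $z^2$ (Theorem \ref{t6}), $\tilde\sigma_{A_1}=[(\tfrac{n+1}{2},n+1)]$; since $n$ is odd this has $\lambda=0$, whence each $V_{f,x}$ is one-dimensional, pure of weight $n+1$, and of Hodge type $(\tfrac{n+1}{2},\tfrac{n+1}{2})$. Therefore $H^{n+1}_{\pha}(X_0)=\ker\{H^{n+1}(X_0)\twoheadrightarrow H^{n+1}_{\lm}(X_t)\}=\mathrm{coker}(\can)$ inherits that pure Hodge type, which is the first assertion.

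\emph{Part (i), the rank.} As a morphism of MHS into a pure Hodge--Tate structure of weight $n+1$, $\can$ annihilates $W_nH^n_{\lm}(X_t)$; combined with $\mathrm{im}(\sp)\subseteq W_nH^n_{\lm}(X_t)$ (the weights of $H^n$ of a complete variety being $\leq n$) and $\ker(\can)=\mathrm{im}(\sp)$, this forces $\mathrm{im}(\sp)=W_nH^n_{\lm}(X_t)$, and --- the vanishing cycles at distinct nodes being disjoint, so $N^2=0$ and the weights of $H^n_{\lm}(X_t)$ lie in $\{n-1,n,n+1\}$ --- $\can$ restricts to an embedding $\mathrm{Gr}^W_{n+1}H^n_{\lm}(X_t)\hookrightarrow\CC^{|\mathrm{sing}(X_0)|}$. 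Hence $\dim H^{n+1}_{\pha}(X_0)=|\mathrm{sing}(X_0)|-\dim\mathrm{im}(\can)$. Since the limiting Hodge filtration is the Hodge filtration of $H^n(X_t)$ and the target is of pure type $(\tfrac{n+1}{2},\tfrac{n+1}{2})$, one has $\mathrm{im}(\can)=\can\bigl(\mathrm{Gr}_F^{(n+1)/2}H^n(X_t)\bigr)$; the Griffiths--Green residue presentation --- available on $\mathbf{P}$ precisely because $\mathbf{P}$ is smooth projective with Bott vanishing and $X_0$ is ample (these hypotheses supplying the role of the Euler sequence and Jacobian ideal on $\PP^{n+1}$) --- exhibits $\mathrm{Gr}_F^{(n+1)/2}H^n(X_t)$ as a quotient of $H^0(\mathbf{P},K_{\mathbf{P}}(\tfrac{n+1}{2}X_0))$ along which $\can$ becomes evaluation at the nodes, giving $\dim H^{n+1}_{\pha}(X_0)=\dim\mathrm{coker}(\mathrm{ev})$. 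Making this residue-theoretic identification precise --- so that $\can$ \emph{equals} evaluation at the nodes once Bott vanishing has cleared the auxiliary cohomology --- is the main obstacle in (i).

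\emph{Part (ii), setup and bound.} The plan is to pass to a resolution of $\cx$ supported on $\mathrm{sing}(\cx)$ and localize the discrepancy. Let $p\colon\tilde\cx\to\cx$ resolve the ordinary double points of $\cx$ --- a small resolution (exceptional fibres $\cong\PP^1$) when $n=2$, a blow-up with quadric exceptional divisors in general --- so that $\tilde\cx$ is smooth and $H^*_{\lm}(\tilde X_t)=H^*_{\lm}(X_t)$. Part I's generalized Clemens--Schmid sequence is then exact for $\tilde f=f\circ p$; comparing it with the sought-for Clemens--Schmid statement for $X_0$ along the map $\tilde X_0\to X_0$ (an isomorphism over $X_0\setminus\mathrm{sing}(\cx)$, using $\mathrm{sing}(\cx)\subset X_0$), the deviation is concentrated at the nodes of $\cx$, each contributing a single class: the reduced cohomology of the link of an ordinary double point of an $(n+1)$-fold in the middle degree, i.e.\ $H^n(S^n\times S^{n+1})\cong\QQ(-\tfrac n2)$ (here $n$ even is used). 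Such a one-dimensional contribution can surface only as a class of $H^n_{\lm}(X_t)^T$ missed by $\sp$ or as a weight-$\leq n$ class forced into $H^{n+1}_{\pha}(X_0)$; each node feeds at most one dimension into the pair, so $\dim\mathrm{coker}\{H^n(X_0)\overset{\sp}{\to}H^n_{\lm}(X_t)^T\}+\dim W_nH^{n+1}_{\pha}(X_0)\leq|\mathrm{sing}(\cx)|$.

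\emph{Part (ii), equality and obstacle.} For $n=2$ the exceptional loci are honest $\PP^1$'s, and the contribution of each node is always realized: the class of the exceptional curve is a new $(1,1)$-class in $H^2(\tilde X_0)$ that cannot be absorbed into $H^2(X_0)$ without producing a compensating weight-$2$ class in $H^3_{\pha}(X_0)$, and a weight argument for surfaces shows that $\mathrm{coker}(\sp|_{H^2})$ and $W_2H^3_{\pha}(X_0)$ exactly partition this one-dimensional contribution, so the bound is an equality. The main obstacle in (ii) is the weight-filtration bookkeeping for arbitrary (possibly non-isolated, non-normal) $X_0$: one must check that passing from $\tilde X_0$ down to $X_0$ introduces no further deviation from Clemens--Schmid beyond the terms localized at $\mathrm{sing}(\cx)$, which is exactly where the precise shape of Part I's extended sequence, and the hypothesis $\mathrm{sing}(\cx)\subset X_0$, come into play.
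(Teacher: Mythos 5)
Your preliminary reductions are sound and consistent with the paper: the spectrum of a node gives $V_{f,x}\cong\QQ(-\tfrac{n+1}{2})$, so $H^{n+1}_{\pha}(X_0)=\mathrm{coker}(\can)$ is pure of type $(\tfrac{n+1}{2},\tfrac{n+1}{2})$, and strictness forces $\mathrm{im}(\sp)=W_nH^n_{\lm}(X_t)$ with $\can$ injective on $\gr^W_{n+1}$. But the step you flag as ``the main obstacle'' --- that under a residue presentation of $\gr_F^{(n+1)/2}H^n(X_t)$ as a quotient of $H^0(\mathbf{P},K_{\mathbf{P}}(\tfrac{n+1}{2}X_0))$ the map $\can$ \emph{becomes} evaluation at the nodes --- is where all the content lies, and it is not supplied; it would require an asymptotic analysis of residues as $t\to 0$ near each node, and is essentially equivalent to the theorem. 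The paper's proof of Theorem \ref{t:schoen} takes a genuinely different route that avoids residue calculus on the degenerating family: it blows up $X_0$ and $\mathbf{P}$ at the nodes, uses Mayer--Vietoris and weak Lefschetz to express $W_{n-1}H^n(X_0)$ as the cokernel of a restriction map $\beta$ to the exceptional quadrics, invokes Schoen's result to identify $\mathrm{im}(\beta^{\vee})$ with $H^1(\mathbf{P},\mathcal{I}_{\mathsf{S}}\otimes K_{\mathbf{P}}(\tfrac{n+1}{2}X_0))$, and only then uses Bott vanishing to convert this into $\mathrm{coker}(\mathrm{ev}_{\mathsf{S}})$; the vanishing-cycle sequence then transfers the count to $H^{n+1}_{\pha}(X_0)$. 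Without Schoen's input (or a worked-out substitute) your argument does not close.

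\textbf{Part (ii).} Here the gap is the mechanism. ``Each node feeds at most one dimension into the pair'' is the conclusion, not an argument: nothing in your sketch explains why the two discrepancies cannot both absorb the same node's class, nor why the total is bounded by $\rd=|\mathrm{sing}(\cx)|$ rather than by some multiple of it. The paper's Theorem \ref{th-csd} gets this from the perverse short exact sequence $0\to\oplus_i\imath^{q_i}_*V_i\to\QQ_{\cx}[n+1]\to\IC_{\cx}\to 0$ with $V_i\cong H^n_{\mathrm{pr}}(Q_i)\cong\QQ(-\tfrac n2)$ (obtained from the Decomposition Theorem for the blowup), whose hypercohomology, spliced with the IH Clemens--Schmid sequence of Part I, yields the exact sequence $0\to H^{2m}(X_0)\to\IH^{2m}(\cx)\overset{\alpha}{\to}\QQ(-m)^{\oplus\rd}\overset{\beta}{\to}H^{2m+1}(X_0)\to\IH^{2m+1}(\cx)\to 0$; the bound is then literally $\ra\le\mathrm{rk}(\alpha)$, $\rb=\mathrm{rk}(\beta)$ and $\mathrm{rk}(\alpha)+\mathrm{rk}(\beta)=\rd$ by exactness. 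You would need to produce this (or an equivalent) exact sequence; the generalized Clemens--Schmid sequence for the resolution $\tilde\cx$ alone does not localize the discrepancy without the comparison of $\QQ_{\cx}[n+1]$ with $\IC_{\cx}$. Finally, the equality for $n=2$ is proved in the paper only when $X_0$ is irreducible: one must show $\ra=\mathrm{rk}(\alpha)$, which is controlled by $\mathrm{rk}\{\IH^2_c(\cx)\to\IH^2(\cx)\}=\rg-1$ with $\rg$ the number of components of $X_0$; your ``weight argument for surfaces'' does not address reducibility, and as stated the equality claim is stronger than what is established.
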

\begin{rem}
As in Remark \ref{rem-non-iso}, only Theorem \ref{t-d}(i) is proved here.  We include (ii), which will be proved in \cite{KL3}, to give a more global view of the subject.
\end{rem}

The Hodge theory of singularities has numerous applications to bounding the number and type of singularities on hypersurfaces in a fixed ambient space (see for instance Example \ref{ex2.2d}). Recent accounts and results in this direction may be found in \cite{vS}, \cite{Ca}, and \cite{CDKP}.

\subsection*{Structure of the paper} We now turn to a summary of the contents.  After a brief review of the most relevant results of \textbf{Part I}, we commence with $\S$\ref{S2.1} in the setting of an isolated complete intersection singularity (ICIS), whose weighted spectrum captures the MHS-type and $T^{\text{ss}}$-action on the reduced middle cohomology of the Milnor fiber (or vanishing cohomology) in a smoothing, refining the usual spectrum and Milnor number.  We describe the generic symmetries and bounds on this spectrum in Prop. \ref{prop1.2}, the stricter bounds that accrue to du Bois and rational singularities in Prop. \ref{prop2.1b}, and the relation to the tail (and semistable reduction) in Prop. \ref{prop2.1c}.   A common thread of the paper is the computation of the weighted spectrum, and one case where this is easily done is that of quasi-homogeneous hypersurface singularities.  We review and amplify the residue-theory approach due to Steenbrink \cite{St1} in $\S$\ref{S2.2}, with a view to applications in $\S$\ref{S-wf} (incl. bounds on surface singularities) and $\S$\ref{S2.3}, as well as to eigenspectra in \cite{GKS} (where the smoothing admits a cyclic automorphism).  In $\S$\ref{s-schoen}, we explain how a result in Schoen's thesis \cite{Schoen} allows us to disambiguate the contribution of the spectrum to the LMHS for a degeneration of odd-dimensional varieties with nodal singular fiber.

\medskip

Our series of papers is motivated by the study of the interplay between geometric and Hodge-theoretic compactifications of moduli.  It is well-known, as discussed in {\bf Part I} (cf. Thm. \ref{th-I1} below), that one has relatively good Hodge-theoretic control over this interplay for degenerations where the central fiber has semi-log-canonical (slc) singularities; these are the singularities allowed in KSBA compactifications \cite{Kollar-Book}. 
However, in many geometric situations (such as GIT compactifications), the central fibers can have worse than log-canonical singularities. In $\S$\ref{S2.3}, we focus on essentially the simplest case for which the central fiber has non-log-canonical singularities. This study is inspired by the work of Hassett \cite{hassett} in dimension $1$, and some work of the authors and their collaborators in dimension $2$ (see esp. \cite[\S6]{log2}, \cite{gallardo}, and \cite{GPSZ}). Specifically, we classify singularities with Calabi--Yau tail (Thm. \ref{t-a}); these should be understood as higher-dimensional analogues of the cusp singularity in dimension $1$ (which is replaced by an elliptic curve in a semi-stable replacement of the degeneration).

\medskip

In $\S$\ref{S2.4}, we return to the other extreme: log-canonical singularities, and the even milder {\it $k$-log-canonical} and {\it $k$-rational} singularities with $k\geq 1$ ($k=0$ being the log-canonical and rational case respectively).  This is inspired by the recent work of Mustata--Popa on Hodge ideals \cite{MP,MP18i,MP18ii,MP-Inv,MOP,Po}, and some recent work of M. Saito and his collaborators \cite{Sai16, JKSY} (see also Remarks \ref{Rem-kDB} and \ref{Rem-krat} for subsequent developments). After delineating the relationships between the various birational and Hodge-theoretic invariants of singularities (log-canonical threshold, jumping numbers, generation level, period exponent, etc.) in Prop. \ref{prop2.4a}, we characterize $k$-log-canonical singularities in terms of the spectrum and describe the influence they have on the limiting MHS (Cor. \ref{cor2.4a}, Thm. \ref{t-b}).  Corollary \ref{cor2.4b} and Proposition \ref{prop2.4b} apply Prop. \ref{prop2.4a} to an algebro-geometric question about the equivalence of \emph{canonicity} of an isolated log-canonical singularity and \emph{Picard--Lefschetz monodromy} for the degeneration to it.  An additional application is to the period map for cubic fourfolds (cf. \cite{La10}); see Remark \ref{rem-cubic}. 

\medskip

After establishing the relevance of the spectrum for problem of degeneration, we return to the issue of computing the spectrum for a given singularity. While the quasi-homogeneous case of $\S$\ref{S2.2} is fairly easy to understand, the case of a general isolated hypersurface singularity is more subtle. The beginning of $\S$\ref{S5.1} uses work of Steenbrink and Danilov to derive formulas \eqref{eq2.5.2}-\eqref{eq2.5.6} for the weighted spectrum, which are then used to relate the Newton polytope $\NP$ to log-canonicity and rationality (Cor. \ref{cor2.5B2}), $k$-log-canonicity (Thm. \ref{thm2.5A}), extremal $N$-strings in $H^n_{\lm}(X_t)$ (Thm. \ref{thm2.5B}), and the Calabi--Yau case to the \emph{Kulikov type} of the degeneration (Cor. \ref{cor2.5B1}).  In $\S$\ref{S5.2}, we recall how the interplay between the $\cv$-filtration (arising from the Gauss-Manin system on the Milnor fiber) and the Brieskorn lattice (arising from local asymptotics of periods) leads to a combinatorial, Jacobian-ring type description of $(V_f,F^{\bullet})$ and $T^{\text{ss}},N\in\mathrm{End}(V_f)$ (Thms. \ref{t5.2a} and \ref{t5.2b}). We use this to work out weighted spectra of a generalization of $T_{p,q,r}$-singularities to higher dimension (Ex. \ref{e5.11}). In $\S$\ref{S2.6}, we review and reprove the well-known Sebastiani--Thom formula for the join of singularities, using the occasion to correct an inaccuracy in Scherk--Steenbrink and Kulikov \cite{ScSt,Kul}.  An easy consequence in $\S$\ref{sec-ex-klog} is that the suspension (join with $z^2$) of a singularity [resp. double suspension] amounts to a half-twist on the level of MHSs [resp. Tate-twist], which leads to natural examples of $k$-log-canonical singularities.

\subsection*{Synopsis of Part I (\cite{KL1})}
For ease of reference, we summarize here the results, notations and terminologies that we shall most frequently invoke below.

Let $f$ be as in \eqref{f}. Shrinking $\Delta$ if needed, $X_0$ is a deformation retract of $\cx$, so that $H^*(\cx)\cong H^*(X_0)$.  Assuming that $f^{-1}(\Delta^*)$ is smooth, we define additional MHSs $H^k_{\lm}(X_t):=\HH^k(X_0,\psi_f\QQ_{\cx})$, $H^k_{\van}(X_t):=\HH^k(X_0,\phi_f\QQ_{\cx})$, $H^k_{\pha}(X_0):=\ker\{\sp\colon H^k(X_0)\to H^k_{\lm}(X_t)\}$, $\IH^k(\cx):=\HH^{k-n-1}(X_0,\imath^*_{X_0}\IC_{\cx})$, $\IH_c^k(\cx):=\HH^{k-n-1}(X_0,\imath^!_{X_0}\IC_{\cx})$, and write $T=T^{\text{ss}}T^{\text{un}}=T^{\text{ss}}e^N$ for the action of monodromy on $H^*_{\lm}$ and $H^*_{\van}$.  (Note that $\IH^k(\cx)$ carries a \emph{mixed} Hodge structure, not a pure one:  e.g., if $\cx$ is smooth, then $\IC_{\cx}\simeq \QQ_{\cx}[n+1]$ and this is $H^k(X_0)$.)  Then we have the \emph{vanishing cycle sequences} (of MHS, compatible with $T$)
\begin{equation}\label{I1}
0\to H^k_{\pha}(X_0)\to H^k(X_0)\overset{\sp}{\to}H^k_{\lm}(X_t)\overset{\can}{\to} H^k_{\van}(X_t)\overset{\delta}{\to} H^{k+1}_{\pha}(X_0)\to 0
\end{equation}
and the \emph{IH Clemens-Schmid sequences} (of MHS)
\begin{equation}\label{I2}
0\to H^{k-2}_{\lm}(X_t)_T (-1)\overset{\sp^{\vee}_{\IH}}{\to} \IH^k_c(\cx) \overset{\gy}{\to} \IH^k(\cx)\overset{\sp_{\IH}}{\to} H^k_{\lm}(X_t)^T\to 0
\end{equation}
for each $k$, cf. (I.5.7),\footnote{We have used the assumption that $f^{-1}(\Delta^*)$ is smooth to simplify the end terms of (I.5.7).} (I.5.12), and Remark I.5.6.

Consider a semistable reduction which factors as $\cy\overset{\pi}{\to}\fx$ and $\fx\overset{\rho}{\to}\cx$, where $\rho$ is the base-change by $t\mapsto t^{\kappa}$ (with $(T^{\text{ss}})^{\kappa}=I$), and $\pi$ is a log-resolution of $(\fx,X_0)$; and let $\cy'\overset{\pi'}{\to}\cx$ be a log-resolution of $(\cx,X_0)$. We write $Y_0=\pi^{-1}(X_0)=\tilde{X}_0\cup_E \cE$ and $Y_0'=(\pi')^{-1}(X_0)=\tilde{X}_0\cup_{E'}\cE'$, and refer to $(\cE,E)$ or $\cE\setminus E$ as the \emph{tail}.  This leads to sequences (cf. (I.8.4))
\begin{equation}\label{I3}
\left\{
\begin{split}
0\to \bar{H}^k(X_0)\to H^k_{\lm}(X_t)^T\to \bar{H}^k(\cE')\to 0 \\
0\to \bar{H}^k(X_0)\to H^k_{\lm}(X_t)^{T^{\kappa}}\to \bar{H}^k(\cE)\to 0
\end{split}
\right.
\end{equation}
termed \emph{generalized Clemens-Schmid} in {\bf Part I}.  (The bars denote certain subquotients, for which we refer to $\S$I.8.1.)

Assuming that $\cx$ has reduced special fiber (i.e. $(f)=X_0$), we have the following (Thms. I.9.3, I.9.9, I.9.11)
\begin{thm}\label{th-I1}
\textup{(i)} If $X_0$ is du Bois, then
\begin{equation*}
\gr_F^0 H^k(X_0)\cong \gr_F^0 H_{\lm}^k(X_t)^{T^{\text{ss}}}\cong \gr_F^0 H^k_{\lm}(X_t)\;\;\;\;(\forall k). 
\end{equation*}
\textup{(ii)}  If $X_0$ has rational singularities and $\cx$ is smooth, then
\begin{equation*}
W_{k-1}\gr_F^0 H^k_{\lm}(X_t)=\{0\}\;\;\;\;(\forall k).
\end{equation*}
\textup{(iii)}  If $X_0$ has rational singularities and $\cx$ is smooth, then 
\begin{equation*}
\gr_F^1 H^k(X_0)\cong \gr_F^1 (H^k_{\lm}(X_t))^{T^{\text{ss}}}.
\end{equation*}
\end{thm}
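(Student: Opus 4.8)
The plan is to deduce the statement from Theorem~\ref{th-I1}(i)--(ii) together with the two exact sequences \eqref{I1}, \eqref{I2}, using throughout that morphisms of mixed Hodge structures are strict for $F^\bullet$ (so $\gr_F^\bullet$ is exact on them). Applying $\gr_F^1$ to \eqref{I1} yields an exact sequence
\begin{equation*}
0\to\gr_F^1 H^k_{\pha}(X_0)\to\gr_F^1 H^k(X_0)\overset{\gr_F^1\sp}{\longrightarrow}\gr_F^1 H^k_{\lm}(X_t)\overset{\gr_F^1\can}{\longrightarrow}\cdots,
\end{equation*}
in which $\image(\gr_F^1\sp)=\gr_F^1(\image\sp)=\gr_F^1\bigl(H^k_{\lm}(X_t)^T\bigr)$. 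Hence it is enough to prove, for all $k$: \textup{(a)} $\gr_F^1 H^k_{\pha}(X_0)=0$ (so that $\gr_F^1\sp$ is injective); and \textup{(b)} $\gr_F^1\bigl(H^k_{\lm}(X_t)^T\bigr)=\gr_F^1\bigl(H^k_{\lm}(X_t)^{T^{\mathrm{ss}}}\bigr)$ (so that $\sp$ hits all the invariants). Together these give that $\gr_F^1\sp$ is an isomorphism onto $\gr_F^1 H^k_{\lm}(X_t)^{T^{\mathrm{ss}}}$.

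For \textup{(b)}: since $H^k_{\lm}(X_t)^T=\ker\bigl(N\colon H^k_{\lm}(X_t)^{T^{\mathrm{ss}}}\to H^k_{\lm}(X_t)^{T^{\mathrm{ss}}}\bigr)$ and $N$ is strict for $F$ (up to shift), \textup{(b)} reduces to the vanishing of the induced map $\bar N\colon\gr_F^1 H^k_{\lm}(X_t)\to\gr_F^0 H^k_{\lm}(X_t)$. Now the relative monodromy weight filtration of the limit MHS forces $\gr_F^1 H^k_{\lm}(X_t)$ to have weights $\leq k+1$: for $r\geq0$ the Lefschetz isomorphism $N^r\colon\gr^W_{k+r}\overset{\sim}{\to}\gr^W_{k-r}$ is of Hodge type $(-r,-r)$, so $h^{p,q}(\gr^W_{k+r})=h^{p-r,q-r}(\gr^W_{k-r})$ is nonzero only for $p\geq r$, which for $p=1$ forces $r\leq1$. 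As $\bar N$ lowers weights by $2$, $\bar N(\gr_F^1 H^k_{\lm}(X_t))$ has weight $\leq k-1$. On the other hand, $X_0$ being du Bois, Theorem~\ref{th-I1}(i) identifies $\gr_F^0 H^k_{\lm}(X_t)$ with $\gr_F^0 H^k(X_0)=\HH^k(X_0,\co_{X_0})$, of weights $\leq k$ since $X_0$ is proper, while Theorem~\ref{th-I1}(ii) gives $W_{k-1}\gr_F^0 H^k_{\lm}(X_t)=0$. So $\gr_F^0 H^k_{\lm}(X_t)$ is pure of weight $k$, whence $\bar N=0$ and \textup{(b)} holds.

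Part \textup{(a)} --- the vanishing $\gr_F^1 H^k_{\pha}(X_0)=0$ --- is the crux. Via \eqref{I2}, with $\IC_\cx\simeq\QQ_\cx[n+1]$, one has $H^k_{\pha}(X_0)=\image\{\gy\colon\IH^k_c(\cx)\to H^k(X_0)\}$, so $\gr_F^1 H^k_{\pha}(X_0)=\image(\gr_F^1\gy)$. Here $\IH^k_c(\cx)$ differs from the ``Lefschetz part'' $H^{k-2}(X_0)(-1)$ only by terms supported on $\mathrm{sing}(X_0)$ (the local cohomology sheaves $\mathcal H^{>2}_{X_0}\QQ_\cx$, i.e.\ the reduced cohomology of the local complements of $X_0$), and on the Lefschetz part $\gr_F^1\gy$ is cup product with $c_1(\co_\cx(X_0)|_{X_0})$, which vanishes because $\co_\cx(X_0)\cong f^*\co_\Delta(0)\cong\co_\cx$ ($X_0=(f)$ being a reduced fiber). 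Thus only the singularity correction terms can contribute, and the plan is to handle them by the same filtered-resolution technique that underlies Theorem~\ref{th-I1}(i)--(ii): take a log resolution $\pi'\colon\cy'\to\cx$ of $(\cx,X_0)$ with $D:=(\pi')^{-1}(X_0)_{\mathrm{red}}=\tilde X_0\cup\cE'$ a reduced normal crossing divisor, compute $\gr_F^1 H^k(X_0)$ by the Du Bois complex $R\pi'_*\Omega^1_D[-1]$ and $\gr_F^1 H^k_{\lm}(X_t)^{T^{\mathrm{ss}}}$ by the reduced restriction of the relative log de Rham complex $\Omega^\bullet_{\cy'/\Delta}(\log Y_0')|_D$, and identify the cone of the natural comparison morphism (which realizes $\gr_F^1\sp$) as a complex supported over $\mathrm{sing}(X_0)$, assembled from structure sheaves and top-degree differentials of the strata of the exceptional part $\cE'$ (entering via Poincaré residues of $df/f$). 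The du Bois hypothesis annihilates the $\co$-type contributions; the strictly stronger \emph{rational}-singularities hypothesis --- genuinely needed, as a simple-elliptic surface singularity (log-canonical, not rational, with smooth hypersurface total space) already has $\gr_F^1 H^{n+1}_{\pha}(X_0)\neq0$ --- supplies the Grauert--Riemenschneider vanishing $R^{>0}\mu_*\omega_{\tilde X_0}=0$, $\mu_*\omega_{\tilde X_0}=\omega_{X_0}$ (for $\mu=\pi'|_{\tilde X_0}$) and its analogues along the remaining exceptional strata, annihilating the $\omega$-type contributions.

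The main obstacle is exactly this last step: bookkeeping the cone of the comparison morphism over a \emph{reducible} normal crossing divisor $D$ and verifying that each exceptional-stratum summand is killed by a Grauert--Riemenschneider-type vanishing; this is where the precise strength of ``rational'' (versus ``du Bois'') is used. In the isolated-hypersurface case one can bypass this by extracting $\gr_F^1 H^{n+1}_{\pha}(X_0)=0$ from the residue analysis behind Proposition~\ref{prop2.4b}/Theorem~\ref{t-c}. One should also record that the comparison morphism is compatible with $\sp$, so that the resulting isomorphism $\gr_F^1 H^k(X_0)\cong\gr_F^1 H^k_{\lm}(X_t)^{T^{\mathrm{ss}}}$ is indeed induced by $\sp$, and that \textup{(a)}--\textup{(b)} are deduced uniformly in $k$.
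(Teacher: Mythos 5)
First, a structural point: the paper does not prove Theorem \ref{th-I1} at all. It is imported from Part I (the text cites Theorems I.9.3, I.9.9 and I.9.11 of \cite{KL1}) and used as a black box, so there is no in-text proof to compare your argument against. Your proposal moreover only addresses part (iii), taking (i) and (ii) as inputs; as a proof of the full three-part statement it is therefore circular, though deriving (iii) from (i)--(ii) is a legitimate target. That derivation is sound as far as your step (b) goes: strictness of $N$, the observation that $\gr_F^1 H^k_{\lm}(X_t)$ lives in weights $\leq k+1$ (from the $N^r$-isomorphisms $\gr^W_{k+r}\cong\gr^W_{k-r}$), and the fact that (ii) forces $\gr_F^0 H^k_{\lm}(X_t)$ into weights $\geq k$ do combine to kill the induced map $\gr_F^1\to\gr_F^0$, whence $\gr_F^1(H^k_{\lm}(X_t)^T)=\gr_F^1(H^k_{\lm}(X_t)^{T^{\mathrm{ss}}})$; and the reduction of (iii) to (a) plus (b) via \eqref{I1} and \eqref{I4} is correct.

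The genuine gap is your step (a): the vanishing $\gr_F^1 H^k_{\pha}(X_0)=\{0\}$ for all $k$ when $X_0$ has rational singularities and $\cx$ is smooth. You rightly call it the crux, but what you supply is a program --- identify the cone of a comparison morphism between the Du Bois complex of $X_0$ and the relative log de Rham complex over the reducible NCD $Y_0'$, then kill the exceptional-stratum summands by Grauert--Riemenschneider --- and you yourself flag its bookkeeping as ``the main obstacle.'' Nothing in the proposal actually establishes the vanishing, so the argument is incomplete precisely at the point where the rationality hypothesis (as opposed to du Bois) has to do its work. Note that the present paper does prove this vanishing in the special case of a single isolated rational hypersurface singularity (Proposition \ref{prop2.1b}, where only $k=n+1$ is at issue), by a different mechanism: purity of $H^{n+1}(X_0)$ gives an injection into $H^{n+1}(\tilde{X}_0)$, the weight-monodromy spectral sequence expresses $\gr_F^1\gr^W_{n+1}H^{n+1}_{\lm}$ through $H^{0,n-1}(E^{[0]})$, and rationality (via $R(\pi_0)_*\co_{\tilde{X}_0}=\co_{X_0}$) annihilates that group. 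That argument is a better model for closing your gap than the filtered-cone bookkeeping, but extending it to non-isolated singularities and to all degrees $k$ is exactly the content that Part I supplies and your proposal does not.
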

For instance, (i) [resp. (ii)] holds if $\cx$ is normal and $\QQ$-Gorenstein and $X_0$ is semi-log-canonical [resp. log-terminal].

Finally, assume that $\cx$ is smooth:   then \eqref{I2} reduces to the \emph{Clemens-Schmid sequence} (cf. Thm. I.5.3)
\begin{equation}\label{I4}
0\to H^{k-2}_{\lm}(X_t)_T (-1)\overset{\sp^{\vee}}{\to} H_{2n-k+2}(X_0)(-n-1)\overset{\gy}{\to}H^k(X_0)\overset{\sp}{\to}H^k_{\lm}(X_t)^T\to 0,
\end{equation}
for each $k$, and we have
\begin{thm}\label{th-I2}
\textup{(i)} The phantom cohomology $H^k_{\pha}(X_0)=\ker(\sp)=\delta(H^{k-1}_{\van})=\text{im}(\gy)$ is pure of weight $k$ and level $\leq k-2$.\\
\textup{(ii)} The vanishing cohomology $H^k_{\van}(X_t)$ \textup{(}and hence $H^k_{\pha}$\textup{)} is zero outside the range $|k-n|\leq \dim(\mathrm{sing}(X_0))$. \textup{(}More generally, this holds if $\cx$ has local complete intersection singularities.\textup{)}
\end{thm}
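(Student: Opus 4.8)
The two parts are essentially independent: \textup{(i)} is mixed-Hodge-theoretic bookkeeping around the Clemens--Schmid sequence \eqref{I4}, while \textup{(ii)} is perverse-sheaf theory.

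For \textup{(i)}, the plan is first to record the three descriptions of $H^k_{\pha}(X_0)$ from the given exact sequences: $H^k_{\pha}(X_0)=\ker(\sp)$ is the definition, $=\delta\bigl(H^{k-1}_{\van}(X_t)\bigr)$ follows from exactness of \eqref{I1} at its last term, and $=\image(\gy)$ follows from exactness of \eqref{I4} at $H^k(X_0)$ (using $\image(\sp)\subseteq H^k_{\lm}(X_t)^T$). The bound is then extracted from the last description. As a morphism of MHS, $\gy$ maps into $H^k(X_0)$, which has weights $\le k$ since $X_0$ is proper, and out of $H_{2n-k+2}(X_0)(-n-1)$, which has weights $\ge k$ because $H^{2n-k+2}(X_0)$ has weights $\le 2n-k+2$; an object of the abelian category of MHS that is at once a subobject of a weight-$\le k$ object and a quotient of a weight-$\ge k$ object is pure of weight $k$. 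For the sharper claim (level $\le k-2$) I would invoke strictness of $\gy$ for $W_\bullet$: since the source $S:=H_{2n-k+2}(X_0)(-n-1)$ has weights $\ge k$ while $\image(\gy)$ has weight $k$, one gets $\image(\gy)=\gy(\gr^W_kS)$, so $H^k_{\pha}(X_0)$ is a quotient Hodge structure of $\gr^W_kS\cong\bigl(\gr^W_{2n-k+2}H^{2n-k+2}(X_0)\bigr)^\vee(-n-1)$. The top-weight piece $\gr^W_{2n-k+2}H^{2n-k+2}(X_0)$ injects, via the descent/resolution spectral sequence, into $H^{2n-k+2}$ of a smooth projective $n$-fold, so its Hodge types $(p,q)$ obey $p+q=2n-k+2$ and $0\le p,q\le n$; dualizing and Tate-twisting by $-(n+1)$ carries these to types $(p',q')$ with $p'+q'=k$ and $1\le p',q'\le k-1$, which is exactly "weight $k$, level $\le k-2$." (For $k\ge n+1$ the level bound is immediate from purity together with $F^kH^k(X_0)=0$, since $\gr_F^pH^*(X_0)=0$ for $p>\dim X_0$.)

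For \textup{(ii)}, the key point is that $\phi_f\QQ_{\cx}[n]$ is a perverse sheaf on $X_0$: $\QQ_{\cx}[n+1]$ is perverse because $\cx$ is smooth, or in the lci case because $\mathbb{D}_{\cx}\QQ_{\cx}\cong\QQ_{\cx}[2n+2](n+1)$ (excision for regular embeddings) makes it self-dual up to twist, and $\phi_f[-1]$ preserves perversity. Its support lies in $\mathrm{sing}(X_0)$: the vanishing cycles vanish off the critical locus of $f$, and at a smooth point of $X_0$ the total space $\cx$ is automatically smooth there (a variety carrying a smooth Cartier divisor is smooth along it) with $f$ submersive, so $\phi_f\QQ_{\cx}$ is zero near such a point. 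Put $d:=\dim\mathrm{sing}(X_0)$. A perverse sheaf supported in dimension $d$ has cohomology sheaves concentrated in degrees $[-d,0]$, so $\mathcal{H}^m(\phi_f\QQ_{\cx})=0$ for $m\notin[n-d,n]$; the hypercohomology spectral sequence over the proper $X_0$ then gives $H^k_{\van}(X_t)=0$ for $k<n-d$, and the matching upper bound $k>n+d$ follows either from Artin--Grothendieck vanishing for perverse sheaves on the projective variety $X_0$, or from the self-Verdier-duality (up to twist) of $\phi_f\QQ_{\cx}[n]$, which reflects the lower bound. Since $H^k_{\van}(X_t)=\HH^{k-n}(X_0,\phi_f\QQ_{\cx}[n])$, this is the asserted vanishing for $|k-n|>d$, whereupon $H^k_{\pha}$ vanishes in the same range by \eqref{I1}.

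The main obstacle I anticipate is twofold. In \textup{(i)}, the level bound (beyond mere purity) genuinely uses that the top-weight cohomology of a singular proper variety embeds into that of a resolution — this is what rules out the Hodge types $(k,0)$ and $(0,k)$ — and one must justify $\image(\gy)=\gy(\gr^W_kS)$ cleanly via strictness of morphisms of MHS rather than by hand. In \textup{(ii)}, the delicate part is the shift-and-twist bookkeeping: confirming that it is precisely $\phi_f\QQ_{\cx}[n]$ that is perverse, that its support is exactly $d$-dimensional, and that the Verdier-duality (or Artin-vanishing) input is what upgrades the easy one-sided vanishing to the symmetric range $|k-n|\le d$. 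Everything else is standard mixed-Hodge-module and perverse-sheaf formalism.
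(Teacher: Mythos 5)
This theorem is quoted in the synopsis from Part I, so the present paper contains no proof of it to compare against; judged on its own terms, your argument is the standard one and is essentially correct. For (i), the three descriptions of $H^k_{\pha}(X_0)$ read off from \eqref{I1} and \eqref{I4} (using $\image(\sp)\subseteq H^k_{\lm}(X_t)^T$), the weight sandwich giving purity, and the use of strictness to realize $H^k_{\pha}(X_0)$ as a quotient of $W_k\bigl(H_{2n-k+2}(X_0)(-n-1)\bigr)$, whose Hodge types are constrained by embedding $\gr^W_{2n-k+2}H^{2n-k+2}(X_0)$ into the cohomology of an $n$-dimensional smooth proper (hyper)resolution, are all sound and do yield level $\leq k-2$. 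For (ii), perversity of $\pphi_f\QQ_{\cx}[n+1]$, its support in $\mathrm{sing}(X_0)$, the concentration of cohomology sheaves in degrees $[n-d,n]$, and a duality argument for the upper bound are exactly the right ingredients.

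Three points need repair. First, your justification of perversity in the lci case is wrong: a local complete intersection is not in general a $\QQ$-homology manifold (a nodal curve already fails), so $\mathbb{D}_{\cx}\QQ_{\cx}\not\cong\QQ_{\cx}[2n+2](n+1)$; the correct input is that lci's of dimension $n+1$ have rectified homological depth $n+1$ (Hamm--L\^e), which is what makes $\QQ_{\cx}[n+1]$ perverse. Second, ``Artin--Grothendieck vanishing on the projective variety $X_0$'' is not an available tool (Artin vanishing is for affine varieties, and the naive hypercohomology spectral sequence only gives $k\leq n+2d$); your alternative via self-Verdier-duality of $\pphi_f\QQ_{\cx}[n+1]$ up to twist plus compactness of $X_0$ does give the symmetric bound, as does the observation that proper pushforward to a point of a perverse sheaf supported in dimension $d$ lands in $D^{[-d,d]}$. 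Third, the deduction for $H^k_{\pha}$ is off by one: since $H^k_{\pha}=\delta(H^{k-1}_{\van})$, it vanishes when $|k-1-n|>d$, not $|k-n|>d$; indeed $H^{n+d+1}_{\pha}$ is typically nonzero (already $H^{n+1}_{\pha}\neq 0$ in general for an isolated singularity, cf. \eqref{vc} and Theorem \ref{t:schoen}). That looseness is present in the theorem's own parenthetical, but your proof should record the shifted range rather than claim ``the same range.''
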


When $\cx$ is smooth with isolated singularities, \eqref{I4} reduces to (cf. (I.6.2))
\begin{equation}\label{I5}
\left\{
\begin{split}
0&\to H^{n-1}_{\lm}(X_t)(-1)\to H_{n+1}(X_0)(-n-1)\to H^{n+1}(X_0)\to H^{n+1}_{\lm}(X_t)\to 0\\
&\text{and}\;\;\;\;\; H^n(X_0)\cong H^n_{\lm}(X_t)^T \,,
\end{split}
\right.
\end{equation}
while $H^k(X_0)\cong H^k_{\lm}(X_t)$ in all other degrees.  The only nonzero degree of vanishing cohomology is $k=n$, for which (cf. Prop. I.6.3)
\begin{equation}\label{I6}
H^k_{\van}(X_t)\cong \tilde{H}^k(\cE\setminus E)
\end{equation}
as MHS if $\cE,E$ are irreducible and smooth.  This and the related Prop. I.6.4 will be strengthened and generalized in Prop. \ref{prop2.1c}, Prop. \ref{prop2.2a}, and $\S$\ref{S2.5} below.
 
\subsection*{Acknowledgement} The authors thank the IAS for providing the environment in which, some years ago, this series of papers was first conceived.  We also thank P. Brosnan, M. Green, P. Griffiths, G. Pearlstein, and C. Robles for our fruitful discussions and collaborations on period maps and moduli during the course of the NSF FRG project ``Hodge theory, moduli, and representation theory''.  The first author also thanks B. Castor and P. Gallardo for discussions and correspondence closely related to this work. The second author thanks R. Friedman and M. Mustata on several discussions related to higher Du Bois and higher rational singularities. Finally, we are grateful to M. Saito for numerous detailed comments on our series of papers, and to the referee for their helpful remarks.

An earlier version of our manuscript contained an extra section dealing with the non-isolated case. In course of revising the manuscript, this material underwent a significant expansion, and we decided it that it makes more sense to spin it off as the third installment \cite{KL3} of our study.

\section{Mixed spectra of isolated singularities\label{S2.1}}

We begin our discussion in the setting of a fixed smoothing of an isolated complete-intersection
singularity (ICIS). Let $U\subset\CC^{n+r}$ be a neighborhood of
the origin $\underline{0}$. Given $\uf=(f_{1},\ldots,f_{r}):\,(U,\uo)\to(\CC^{r},\uo)$
holomorphic with critical locus meeting $f_2=\cdots =f_r=0$ only at $\uo$, consider the 1-parameter
family
\[
\cu:=\uf^{-1}(\disc\times\{0\}^{r-1})\overset{f_{1}}{\longrightarrow}\disc
\]
over a disk about $0$ with coordinate $t$. (We shall freely conflate
$t$ with $f_{1}$.) Writing $V_{\uf}:=\tilde{H}^{n}(\mathfrak{F}_{\cu,t_{0}},\QQ)\cong\imath_{\uo}^{*}\,\pphi_{t}\QQ_{\cu}[n+1]$
for the reduced cohomology of the Milnor fiber $\mathfrak{F}_{\cu,t_{0}}=f_{1}^{-1}(t_{0})\cap B_{\epsilon}(\uo)$,
and $T=T^{\text{ss}}e^{N}$ for the monodromy (about $t=0$) on $V_{\uf}$,
we remark that these are unchanged under local analytic isomorphisms.\footnote{By the Jordan decomposition, we may write $T$ uniquely as a product of \emph{commuting} semisimple and unipotent operators (i.e., $T^{\text{ss}}$ and $e^N$, with $N$ nilpotent().}
So we may assume, arguing as in \cite{Bri} for $r=1$, that the $f_{i}$
are polynomials and (adding very general, higher-degree homogeneous
polynomials to the $f_{i}$ if necessary) that the equations $\mathsf{X}_{0}^{d_{1}}f_{1}(\tfrac{\mathsf{X}_{1}}{\mathsf{X}_{0}},\ldots,\tfrac{\mathsf{X}_{n+r}}{\mathsf{X}_{0}})-t\mathsf{X}_{0}^{d_{1}}=0$
and $\mathsf{X}_{0}^{d_{i}}f_{i}(\tfrac{\mathsf{X}_{1}}{\mathsf{X}_{0}},\ldots,\tfrac{\mathsf{X}_{n+r}}{\mathsf{X}_{0}})=0$
define a fiberwise compactification
\[
\PP^{n+r}\times\disc\supseteq\cx\overset{t}{\to}\disc
\]
of $\cu$ with unique singularity (for both\footnote{or just for $X_{0}$ if $r=1$} $\cx$ and $X_{0}$) at $[1:\uo]$. Of course, $V_{\uf}$ is just
the vanishing cohomology $H_{\mathrm{van}}^{n}(X_{t})$.

In this scenario, $\QQ_{\cx}[n+1]$ may fail to be semisimple
and \eqref{I4} need not be exact.  But we do have \eqref{I2} 
(as $\mathrm{IC}_{\cx}^{\bullet}$ is simple), and -- more consequentially at present --
\eqref{I1} in the form \begin{equation} \label{eq2.1a}
0\to H^n(X_0) \overset{\mathrm{sp}}{\to} H^n_{\mathrm{lim}}(X_t) \overset{\mathrm{can}}{\to} V_{\uf} \to H_{\mathrm{ph}}^{n+1} \to 0\; ,
\end{equation}in which $T$ acts trivially on the end terms and compatibly on the
interior ones. Since $T_{\text{ss}}$ respects the Hodge--Deligne decomposition
$V_{\uf,\CC}=\oplus_{p,q=1}^{n}V_{\uf}^{p,q}$, we may further decompose
$V_{\uf}^{p,q}=\oplus_{\lambda\in[0,1)\cap\QQ}V_{\uf,\lambda}^{p,q}$
into $e^{2\pi\sqrt{-1}\lambda}$-eigenspaces, and set
\[
m_{p+\lambda}:=\sum_{q}m_{p+\lambda,p+q}:=\sum_{q}\dim_{\CC}(V_{\uf,\lambda}^{p,q}).
\]
Moreover, the $(-1,-1)$-morphism $N\in\mathrm{End}(V_{\uf})$ commutes
with $T^{\text{ss}}$ (hence maps $V_{\uf,\lambda}^{p,q}\to V_{\uf,\lambda}^{p-1,q-1}$);
its cokernel $P_{\uf}$ is the \emph{primitive vanishing cohomology},
and we write $m_{p+\lambda,p+q}^{0}=\dim_{\CC}(P_{\uf,\lambda}^{p,q})$
etc.
\begin{defn}\label{def-spec}
The \emph{spectrum} resp. \emph{weighted spectrum}\footnote{we prefer this terminology to ``spectral pairs'' for elements of a free abelian group} of $\uf$ is the element $$\sigma_{\uf}:=\sum_{\alpha\in\QQ}m_{\alpha}[\alpha]\in\ZZ[\QQ]\;\;\;\;\text{resp.}\;\;\;\;
\tilde{\sigma}_{\uf}:=\sum_{\alpha\in\QQ}\sum_{w\in\ZZ}m_{\alpha,w}[(\alpha,w)]\in\ZZ[\QQ\times\ZZ].$$
(For \emph{primitive spectra}, replace $m_{\cdots}$ everywhere by
$m_{\cdots}^{0}$.) The sums $\sum_{\alpha\in\QQ}m_{\alpha}=\dim(V_{\uf})=:\mu_{\uf}$
and $\sum_{\alpha\in\QQ}m_{\alpha}^{0}=\dim(P_{\uf})=:\mu^0_{\uf}$
are the \emph{Milnor} resp. \emph{primitive Milnor numbers} of $\uf$. The
\emph{support} of the spectrum is the subset $|\sigma_{\uf}|\subset\QQ$
on which $m_{\alpha}\neq0$.
\end{defn}

\begin{example}
Consider the two hypersurface singularities ($r=1$) defined by $f=x_1^2+x_2^4+x_3^4$ ($\tilde{E}_7$, Ex. \ref{ex2.2a}, $n=3$) and $g=x_1^4 x_2 + x_1 x_2^4 + x_1^2 x_2^2$ (Ex. \ref{ex5.1}, $n=2$).  We have $\tilde{\sigma}_f=\tilde{\sigma}^0_f=[(1,3)]+2[(\tfrac{5}{4},2)]+3[(\tfrac{3}{2},2)]+2[(\tfrac{7}{4},2)]+[(2,3)]$ and $\tilde{\sigma}_g=[(\tfrac{1}{2},0)]+ \tilde{\sigma}_g^0=[(\tfrac{1}{2},0)]+2[(\tfrac{2}{3},1)]+2[(\tfrac{5}{6},1)]+3[(1,2)]+2[(\tfrac{7}{6},1)]+2[(\tfrac{4}{3},1)]+[(\tfrac{3}{2},2)]$; Milnor numbers are $\mu_f=\mu_f^0=9$ and $\mu_g=\mu_g^0 +1 =13$.  (The normalization here differs from Steenbrink's, cf. Remark \ref{rem5.12}.  See also Theorem \ref{t5.2a} and Remark \ref{rem5.9}(c) for the relation between Milnor, primitive Milnor and Tyurina when $r=1$.)
\end{example}

Let $H$ represent $H^n_{\text{lim}}$ or $V_{\uf}$, and $W,Z\in \mathrm{End}(H_{\CC})$ act on $H^{p,q}$ as multiplication by $p+q,p-q$ respectively. Since $T^{\text{ss}}$, $N$, and $Z$ commute, we can pick a Jordan basis $\{v_1,Nv_1,\ldots,N^{\ell_1}v_1\,;\,\ldots\,;\,v_r,Nv_r,\ldots,N^{\ell_r}v_r\}$ for $H$ such that the cyclic vectors $v_i$ each belong to some $H^{p_i,q_i}_{\lambda_i}$ (i.e.~an intersection of eigenspaces for $T^{\text{ss}}$ and $Z$); then we also have $N^kv_i\in H_{\lambda_i}^{p_i-k,q_i-k}$. We call a cyclic sub-basis $\{v_i,Nv_i,\ldots,N^{\ell_i}v_i\}$ (of cardinality $\ell_i+1$) an \emph{$N$-string of length $\ell_i$} starting at $(p_i,q_i)$.  It can be arranged for these to come in conjugate pairs (or be self-conjugate if $p_i=q_i$ and $e^{2\pi\sqrt{-1}\lambda_i}=\pm 1$).

Now define $\iota_{n}:\QQ\times\ZZ\to\QQ\times\ZZ$ be the involution
sending $(\alpha,w)\mapsto(n+1-\alpha,2n-w)$ if $\alpha\notin\ZZ$
and $(n+1-\alpha,2n+2-w)$ if $\alpha\in\ZZ$. 

\begin{prop}\label{prop1.2}
\emph{(i)} The support $|\sigma_{\uf}|$ lies in $[0,n+1)$\emph{;}
and the $N$-strings in $V_{\uf}$ have length $\leq n$, with those
in $V_{\uf,\neq0}$ centered about weight $w=n$.

\emph{(ii)} In the hypersurface singularity case $r=1$, $|\sigma_{\uf}|\subset(0,n+1)$,
$\iota_{n}(\tilde{\sigma}_{f})=\tilde{\sigma}_{f}$, and the $N$-strings
in $V_{f,0}$ are of length $\leq n-1$ \emph{(}and centered about
$w=n+1$\emph{)}.\end{prop}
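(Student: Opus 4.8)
The plan is to deduce the statement from the corresponding properties of the limiting mixed Hodge structure $H^n_{\lm}(X_t)$, which is where the real Hodge theory lives, and then transfer them to $V_{\uf}$ via the exact sequence \eqref{eq2.1a}. Recall the standard facts: $H^n_{\lm}(X_t)$ carries the limit MHS with weights concentrated in $[0,2n]$ (and in the unipotent-monodromy-free parts symmetrically about $n$), the $T^{\text{ss}}$-eigenspace for $\lambda\neq 0$ has weights centered at $w=n$, while the $\lambda=0$ part has weights centered at $w=n$ as well \emph{but} with the $N$-action coming from the nilpotent orbit so that $N$-strings there have length $\le n$; Hodge symmetry $F^p\Leftrightarrow F^{n-p}$ on the $\gr^W$ pieces gives the reflection $p\leftrightarrow n-p$ in the $\lambda=0$ part and $p\leftrightarrow n+1-p$ (for the rational spectral number $p+\lambda$, reflected to $n+1-(p+\lambda)$) in the $\lambda\neq 0$ part. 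These combine to give the involution $\iota_n$ on $H^n_{\lm}(X_t)$, and the claim is that $\iota_n$ survives when we pass to the quotient $V_{\uf}$.

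First I would establish part (i). For general ICIS ($r\ge 1$), the support bound $|\sigma_{\uf}|\subset[0,n+1)$ follows because $V_{\uf}^{p,q}$ is nonzero only for $1\le p,q\le n$ (as noted just before the Proposition — the Milnor fiber is $(n-1)$-connected and $n$-dimensional Stein, so its reduced cohomology sits in the ``Hodge square'' $[1,n]^2$), whence $p+\lambda\in[1,n]+[0,1)=[1,n+1)\subset[0,n+1)$; the lower endpoint $0$ is only attained in a limiting sense when $p=1,\lambda=0$, but I should be careful and just state $[0,n+1)$ as the paper does, or note $[1,n+1)$ if that is what actually holds. The statement about $N$-strings in $V_{\uf}$ is then: $N$ acts on $V_{\uf}$, commuting with $T^{\text{ss}}$, as a $(-1,-1)$-morphism; since the Hodge numbers vanish outside $1\le p,q\le n$, any $N$-string — which steps $(p,q)\mapsto(p-1,q-1)$ — can have length at most $n-1$... but the Proposition claims $\le n$, so the $\mathfrak{sl}_2$-string must be read relative to $W=p+q\in[2,2n]$, giving length $\le 2n-2$ a priori and $\le n$ from the fact that $H^n_{\van}$ is a quotient of $H^n_{\lm}$ whose $N$-strings have length $\le n$ by the monodromy theorem. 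The centering of the $\lambda\neq 0$ strings about $w=n$ is inherited directly because $\can$ is a morphism of MHS and on the $\lambda\neq 0$ eigenspaces it is an isomorphism $H^n_{\lm}(X_t)_{\neq 0}\xrightarrow{\sim}V_{\uf,\neq 0}$ (the invariant/covariant cohomology $H^n(X_0)$ and $H^{n+1}_{\pha}$ are $T$-invariant, so carry no $\lambda\neq 0$ part), so the entire $\lambda\neq 0$ summand of \eqref{eq2.1a} is an isomorphism and the symmetry is verbatim that of the limit MHS.

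Next, part (ii), the hypersurface case $r=1$. Here $\cx$ (or at least $X_0$) has the fiberwise compactification described, and the key new input is that the special fiber $X_0$ is a \emph{hypersurface} (reduced), hence du Bois in codimension... no — rather, the relevant fact is that for $r=1$ one has the precise shape of \eqref{vc}: $\image(\sp)=H^n(X_t)^T$ and $H^{n+1}_{\pha}$ is pure of weight $n+1$. I would argue as follows: on the $\lambda=0$ (unipotent) part, \eqref{eq2.1a} reads $0\to H^n(X_0)_0\to H^n_{\lm}(X_t)_0\xrightarrow{\can} V_{f,0}\to H^{n+1}_{\pha}\to 0$; by Theorem~\ref{th-I2}(i) applied in this isolated setting (or directly from Clemens–Schmid \eqref{I5}), $H^n(X_0)$ injects into $H^n_{\lm}(X_t)$ as a sub-MHS with image the monodromy invariants, and $H^{n+1}_{\pha}$ is pure of weight $n+1$. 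The limit MHS $H^n_{\lm}(X_t)_0$ has a self-duality coming from the polarization, and the invariant part $H^n(X_t)^T=W_n\cap(\ker N)$ pairs with the quotient $H^n_{\lm}/(\im N + \ldots)$; quotienting out $H^n(X_0)$ and then the pure weight-$(n+1)$ phantom piece leaves $V_{f,0}$ with a residual MHS whose weight filtration is, up to the shift built into $\iota_n$'s second coordinate (note the $2n$ vs $2n+2$ distinction is exactly because integral spectral numbers correspond to $\lambda=0$), symmetric about $w=n+1$. Concretely: $|\sigma_f|\subset(0,n+1)$ because the integral points $0$ and $n+1$ would force $V_f^{p,q}$ with $p\in\{?\}$ outside range once we remove $H^n(X_0)$ (which contributes the would-be boundary), and by $\iota_n$-symmetry exclusion of one endpoint excludes the other. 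The $N$-string length bound $\le n-1$ on $V_{f,0}$: the monodromy theorem gives strings of length $\le n$ on $H^n_{\lm}(X_t)_0$, centered at $w=n$; but the outermost string — the one of maximal weight span reaching down to weight... — gets truncated when we pass to the cokernel of $H^n(X_0)\hookrightarrow H^n_{\lm}(X_t)$, because $H^n(X_0)\supseteq (\ker N)\cap W_n$ contains one end of each maximal string, so $V_{f,0}$'s strings have length $\le n-1$ and recenter at $w=n+1$.

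\medskip

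\textbf{Main obstacle.} The delicate point is the bookkeeping in part (ii): showing that quotienting \eqref{eq2.1a} by the sub-object $H^n(X_0)$ and the quotient object $H^{n+1}_{\pha}$ preserves (a shifted copy of) the $\iota$-symmetry, rather than destroying it. This requires knowing not just that $H^n(X_0)\hookrightarrow H^n_{\lm}(X_t)$ and $H^{n+1}_{\pha}$ is pure, but that these two pieces are \emph{dual} to each other under the polarization pairing on the limit MHS — i.e. that the pairing descends to a perfect pairing on $V_{f,0}$ after the shift, which is essentially the content of the self-duality of vanishing cohomology for hypersurface singularities (Milnor's intersection form / its mixed-Hodge refinement, going back to Steenbrink and Varchenko). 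I would cite this duality (it is in \cite{St1}, \cite{Va4}, \cite{ScSt}) rather than reprove it, and then the symmetry $\iota_n(\tilde\sigma_f)=\tilde\sigma_f$ and the string-length statements drop out by comparing weight filtrations on the two sides of the perfect pairing. A secondary, purely bookkeeping nuisance is getting the second coordinate of $\iota_n$ right ($2n$ for $\alpha\notin\ZZ$ vs.\ $2n+2$ for $\alpha\in\ZZ$): this is not deep but must be checked against the convention \eqref{msp}–\eqref{msp} relating the weight $w=p+q$ of $V_{f,\lambda}^{p,q}$ to the spectral number, being careful that the $\lambda\neq 0$ and $\lambda=0$ parts of $V_f$ are centered at different weights ($n$ versus $n+1$) — exactly the phenomenon the two cases of $\iota_n$ encode.
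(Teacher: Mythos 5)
Your proposal follows the same skeleton as the paper's proof: everything is read off from the exact sequence \eqref{eq2.1a}, the weight--monodromy ($\mathrm{SL}_2$-orbit) structure of $H^n_{\lm}$, the Clemens--Schmid identification \eqref{I5} of $H^n(X_0)$ with $H^n_{\lm}(X_t)^T$ (i.e.\ with the bottoms of the unipotent $N$-strings), and the purity of $H^{n+1}_{\pha}$ (Thm.~\ref{th-I2}(i)), which contributes only length-zero strings in weight $n+1$. Your treatment of the $\lambda\neq 0$ part, and of the chop-the-bottom/recenter-at-$n+1$ mechanism for $V_{f,0}$, is exactly the paper's. The one genuinely different ingredient is the symmetry $\iota_n(\tilde{\sigma}_f)=\tilde{\sigma}_f$: you outsource it to the Steenbrink--Varchenko self-duality of the vanishing cohomology and to a claimed duality between $H^n(X_0)$ and $H^{n+1}_{\pha}$, and flag this as the main obstacle. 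The paper needs neither: once the strings in $V_{f,\neq 0}$ [resp.\ $V_{f,0}$] are known to be centered at $w=n$ [resp.\ $n+1$], the symmetry is immediate from composing complex conjugation ($\lambda\mapsto 1-\lambda$, $(p,q)\mapsto(q,p)$) with the hard-Lefschetz reflection of each individual string about its own center. So your route is legitimate but imports an external theorem where a two-line internal argument suffices; the paper's version also makes transparent exactly where the $2n$ versus $2n+2$ in the definition of $\iota_n$ comes from.

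Two local problems should be fixed. First, your claim that $V_{\uf}^{p,q}\neq 0$ forces $1\leq p,q\leq n$ is false for general ICIS: the example immediately following the Proposition ($n=1$, $r=2$) has $\tilde{\sigma}_{\uf}=[(0,1)]+[(1,1)]$, so $V_{\uf}^{0,1}\neq 0$. The correct bound, and the one the paper uses, is $0\leq p,q\leq n$ (forced by $\dim X_0=n$ on $H^n_{\lm}$ and $H^{n+1}_{\pha}$, hence on $V_{\uf}$); this gives $|\sigma_{\uf}|\subset[0,n+1)$ directly and also yields the string-length bound $\leq n$ without the detour through ``length $\leq n-1$ a priori'' that derails your paragraph. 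Second, your exclusion of $0$ from $|\sigma_f|$ in (ii) is left literally unfinished (``$p\in\{?\}$''). The clean argument: $n+1\notin|\sigma_f|$ already by (i) (it would need $p=n+1$), and then $0\notin|\sigma_f|$ by the $\iota_n$-symmetry; or directly, a string in $V_{f,0}$ of length $\ell$ centered at $n+1$ has top $(p,q)$ with $p+q=n+1+\ell$ and $p,q\leq n$, so $p\geq \ell+1$ and the bottom has first index $p-\ell\geq 1$.
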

\begin{proof}
To prove (i), recall that by the $\mathrm{SL}_2$-orbit theorem \cite{schmid}, the action of $Y:=W-nI$ and $N$ on $H^n_{\text{lim},\CC}$ are part of an $\mathfrak{sl}_2$-triple $(N,Y,N^+)$ commuting with $T^{\text{ss}}$ and $Z$. (This is usually \emph{not} true for $V_{\uf}$.) Hence the $N$-strings of $H^n_{\text{lim}}$, and thus of $\mathrm{im}(T^{\text{ss}}-I)\cap H^n_{\text{lim}}\overset{\can}{\underset{\cong}{\to}}V_{\uf,\neq 0}$, are centered about weight $n$. (Here we are using the fact that $T^{\text{ss}}-I$ is a morphism of MHS together with \eqref{eq2.1a} and the compatibility of Jordan decompositions.) The premise that $\dim X_0=n$ forces $0\leq p,q\leq n$ in $H^n_{\text{lim}}$ and $H^{n+1}_{\text{ph}}$, hence (by \eqref{eq2.1a}) in $V_{\uf}$.

For (ii), by \eqref{I5}
the bottom term (only) of each $N$-string in $\ker(T^{\text{ss}}-I)\cap H_{\text{lim}}^{n}$
comes from $H^{n}(X_{0})$; while $H_{\text{ph}}^{n+1}$ contributes
length-zero strings in weight $n+1$ (Thm. \ref{th-I2}(i)). Finally,
the symmetry property comes from swapping conjugate strings (which
sends $\lambda\in(0,1)\mapsto1-\lambda$ resp. $\lambda=0\mapsto0$,
and $(p,q)\mapsto(q,p)$), then reflecting about $p+q=n$ resp. $n+1$
(which sends $(q,p)\mapsto(n-p,n-q)$ resp. $(n+1-p,n+1-q)$).\end{proof}
\begin{rem}
If $r\neq1$ then (ii) is false. (The problem here is the integral
part of the spectrum. Taking $n=1$, $r=2$, $f_{1}=x_{3}$ and $f_{2}=x_{1}^{2}+x_{2}^{3}+x_{3}^{6}$
gives $\tilde{\sigma}_{\uf}=[(0,1)]+[(1,1)]$.) For this reason, a
different definition of the spectrum is preferred when $r>1$. Writing
$\cv=\uf^{-1}(\disc^{2}\times\{0\}^{r-2})\cap B_{\epsilon}(\uo)\overset{(f_{1},f_{2})}{\longrightarrow}\disc^{2}$
(and $t=f_{1}$,$s=f_{2}$), one replaces $V_{\uf}$ by $'V_{\uf}:=\imath_{\uo}^{*}\pphi_{t}\ppsi_{s}\QQ_{\cv}[n+2]\cong H^{n+1}(f_{2}^{-1}(s_{0}),(f_{1}\times f_{2})^{-1}(t_{0},s_{0}))$
(with $T$ remaining the monodromy about $t=0$), and shows that the
resulting weighted spectrum $'\tilde{\sigma}_{\uf}$ satisfies (ii) \cite{ES}.
\end{rem}
Next suppose that $X_{0}\subset\cx\overset{t}{\to}\disc$ contains
a finite set $\Xi$ of ICIS. Then the MHSs $H_{\van}^{n}(X_{t,\xi})$
defined by 
\[
\pphi_{t}\QQ_{\cx}[n+1]=:\bigoplus_{\xi\in\Xi}\imath_{*}^{\xi}H_{\van}^{n}(X_{t,\xi})
\]
are determined by the local analytic isomorphism class of $(\cx,t)$
(or just of $X_{0}$, in the hypersurface case) at each $\xi$.%
\footnote{The algorithms for computing them in the hypersurface case are touched
on in $\S\S$\ref{S2.2}-\ref{S2.3}. In the $r=1$ case, the spectral
pairs are invariant under deformations preserving the Milnor number.%
} The vanishing-cycle sequence becomes 
\begin{equation} \label{eq2.1b}
0\to H^n(X_0)\to H^n_{\lm}(X_t)\to \oplus_{\xi\in \Xi} H^n_{\van}(X_{t,\xi})\to  H^{n+1}(X_0)\to H_{\lm}^{n+1}(X_t)\to 0,
\end{equation}
with $H^{k}(X_{0})\cong H_{\lm}^{k}(X_{t})$ for $k\neq n,n+1$.
\begin{lem}
\label{lem2.1} $H^{k}(X_{0})$ is pure of weight $k$ for all $k\neq n$.\end{lem}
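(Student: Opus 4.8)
The statement asserts purity of $H^k(X_0)$ in weight $k$ for all $k\neq n$, where $X_0\subset\cx\overset{t}{\to}\disc$ is the projective compactification carrying a finite set $\Xi$ of ICIS and is otherwise smooth. The plan is to exploit that $X_0$ is a projective variety whose singular locus is zero-dimensional (a finite set of isolated singular points), together with the structure of the vanishing-cycle sequence \eqref{eq2.1b}. First I would recall that for any complete variety, $\gr^W_j H^k = 0$ for $j > k$ (weights on cohomology of proper varieties are $\leq k$), so the content is the lower bound: $\gr^W_j H^k(X_0) = 0$ for $j < k$ when $k\neq n$.

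**Key steps.** The cleanest route uses the local cohomology/excision sequence comparing $X_0$ with its smooth locus $U := X_0\setminus\Xi$. Since $\Xi$ is a finite set of points, $H^k_{\Xi}(X_0)$ vanishes outside a narrow range, and in fact one has the exact sequence $\cdots \to H^k_{\Xi}(X_0)\to H^k(X_0)\to H^k(U)\to H^{k+1}_{\Xi}(X_0)\to\cdots$. For an $(n)$-dimensional variety with isolated singularities, one knows $H^k_{\Xi}(X_0)$ is concentrated in degrees related to the local cohomology of the Milnor fibers — this is where I would invoke the isolated-singularity structure: away from degrees $n, n+1$ the local contributions vanish, so $H^k(X_0)\cong H^k(U)$ for $k\neq n, n+1$ (and one handles $k = n+1$ separately). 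Then $H^k(U)$ carries the natural MHS on the cohomology of a smooth (non-proper) variety, which has weights $\geq k$; combined with the proper upper bound $\leq k$ on $H^k(X_0)$, purity follows — once one checks that the restriction $H^k(X_0)\to H^k(U)$ is a morphism of MHS and, for the relevant $k$, an isomorphism onto a sub-MHS forcing the weights to match. Alternatively, and perhaps more in the spirit of the paper, one argues directly from \eqref{eq2.1b}: for $k\neq n, n+1$ the map $H^k(X_0)\cong H^k_{\lm}(X_t)$ is an isomorphism of MHS, and one compares with the weight filtration on the limit MHS, using that the monodromy is trivial in these degrees (the vanishing cohomology being concentrated in degree $n$) so that $H^k_{\lm}(X_t) = H^k(X_t)$ is pure of weight $k$ as the cohomology of a smooth projective variety. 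The degree $k = n+1$ case is the one genuinely requiring \eqref{eq2.1b}: there $H^{n+1}(X_0)$ sits in $0\to H^{n+1}_{\pha}(X_0)\to H^{n+1}(X_0)\to H^{n+1}_{\lm}(X_t)$, but $H^{n+1}_{\lm}(X_t)$ need not be pure in general, so this is precisely the degree excluded from the lemma (it is $k = n+1$, not $k = n$, that is subtle here — wait, the lemma excludes $k = n$). Let me restate: for $k = n+1$, purity holds because $H^{n+1}_{\pha}(X_0)$ is pure of weight $n+1$ (this is part of the analogue of Theorem \ref{th-I2}(i) available via \eqref{I2}, since $\IC_{\cx}^{\bullet}$ is simple), and one checks the image of $H^{n+1}(X_0)$ in $H^{n+1}_{\lm}(X_t)$ lands in $W_{n+1}$ and is pure there using that monodromy acts trivially on the end terms of \eqref{eq2.1a}; the extension of two pure weight-$(n+1)$ pieces is pure.

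**Main obstacle.** The genuinely delicate point is the degree $k = n+1$: one must show that the image of $\can$-complementary part, i.e. the quotient $H^{n+1}(X_0)/H^{n+1}_{\pha}(X_0)\hookrightarrow H^{n+1}_{\lm}(X_t)$, is pure of weight $n+1$ despite $H^{n+1}_{\lm}$ itself being mixed. The key input is that this image equals the cokernel of $\can$ on $V_{\uf}$ intersected with... more precisely, from \eqref{eq2.1b} it is $\ker(H^{n+1}(X_0)\to H^{n+1}_{\lm})^{\perp}$, and the cokernel of $\oplus_\xi H^n_{\van}(X_{t,\xi})\to H^{n+1}(X_0)$ maps to $H^{n+1}_{\lm}$; since $T$ acts trivially on $H^{n+1}(X_0)$ and on $H^{n+1}_{\lm}(X_t)$ in the relevant range (monodromy being concentrated on the degree-$n$ vanishing cohomology), the image lies in $H^{n+1}_{\lm}(X_t)^T$, on which the weight filtration is controlled: by the monodromy weight filtration and the triviality of $N$ on the non-vanishing part, $H^{n+1}_{\lm}(X_t)^T \cap (\text{no }N\text{-strings}) $ is pure of weight $n+1$. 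Assembling this requires care with the compatibility of all the maps with MHS and with $T$, but no new ideas beyond what \eqref{eq2.1a}, \eqref{eq2.1b}, Theorem \ref{th-I2}, and the $\SL_2$-orbit theorem (as used in Proposition \ref{prop1.2}) already supply.
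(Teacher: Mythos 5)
Your treatment of the degrees $k\neq n,n+1$ is correct and genuinely different from the paper's: you use \eqref{eq2.1b} to identify $H^k(X_0)\overset{\sim}{\to}H^k_{\lm}(X_t)$, note that $H^k_{\van}(X_t)=0$ forces $T=I$ there (since $T-I=\var\circ\can$ factors through the vanishing cohomology), and conclude purity because the limit weight filtration is the monodromy weight filtration of $N=0$ centered at $k$. The paper instead compares $\QQ_{X_0}[n]$ with $\IC_{X_0}$ via the perverse short exact sequence $0\to\oplus_{\xi}\imath_*^{\xi}W_{\xi}\to\QQ_{X_0}[n]\to\IC_{X_0}\to 0$ and invokes purity of $\IH^*(X_0)$; that single argument covers every $k\neq n$ at once, including $k=n-1$ (where it only gives an injection $H^{n-1}(X_0)\hookrightarrow\IH^{n-1}(X_0)$, which suffices) and $k=n+1$.

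The genuine gap in your proposal is at $k=n+1$, which the Lemma does assert. You correctly reduce to showing that $H^{n+1}_{\pha}(X_0)$ is pure of weight $n+1$ (the quotient $H^{n+1}_{\lm}(X_t)$ is pure since $T=I$ on it, and an extension of pure by pure of the same weight is pure), but the justification you offer --- ``the analogue of Theorem \ref{th-I2}(i) available via \eqref{I2}'' --- does not go through. Theorem \ref{th-I2}(i) assumes $\cx$ smooth, whereas in this section $\cx$ has an isolated singularity at $[1:\uo]$; and \eqref{I2} only controls $\ker\{\IH^{n+1}(\cx)\to H^{n+1}_{\lm}\}$, not $\ker\{H^{n+1}(X_0)\to H^{n+1}_{\lm}\}$. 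Passing from $\IH^{n+1}(\cx)$ to $H^{n+1}(\cx)\cong H^{n+1}(X_0)$ requires the perverse comparison $0\to\imath_*W\to\QQ_{\cx}[n+1]\to\IC_{\cx}\to 0$ on the $(n+1)$-dimensional space $\cx$, whose skyscraper term contributes to hypercohomology exactly in degrees $n$ and $n+1$ --- precisely the degree at issue --- so $H^{n+1}(\cx)$ is a priori an extension involving a quotient of the MHS $W$, which need not be pure. Nor does purity of $H^{n+1}_{\pha}=\mathrm{im}\{\oplus_{\xi}H^n_{\van}(X_{t,\xi})\overset{\delta}{\to}H^{n+1}(X_0)\}$ come for free from the sources: the $H^n_{\van}(X_{t,\xi})$ carry weights all the way down to $0$, and properness of $X_0$ only bounds the weights of the image above by $n+1$. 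The repair is essentially the paper's proof: run the comparison on the $n$-dimensional $X_0$ rather than on $\cx$, so the skyscraper correction only disturbs degrees $n-1$ and $n$, giving $H^{n+1}(X_0)\cong\IH^{n+1}(X_0)$, which is pure by Saito/Gabber. (A smaller inaccuracy in your excision route: $H^k_{\Xi}(X_0)$ does not vanish for all $k\neq n,n+1$; e.g. $H^{2n}_{\Xi}(X_0)\cong\oplus_{\xi}H^{2n-1}(L_{\xi})\neq 0$, though top degree is harmless.)
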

\begin{proof}
Since $\QQ_{X_{0}}[n]$ is perverse (for ICIS), we have
\[
0\to\oplus_{\xi\in\Xi}\imath_{*}^{\xi}W_{\xi}\to\QQ_{X_{0}}[n]\to\IC_{X_{0}}\to0
\]
with $\{W_{\xi}\}$ MHSs in degree $0$. Taking $\HH^{*}$ yields
\[
0\to H^{n-1}(X_{0})\to\IH^{n-1}(X_{0})\to\oplus_{\xi}W_{\xi}\to H^{n}(X_{0})\to\IH^{n}(X_{0})\to0
\]
and $H^{k}(X_{0})\cong\IH^{k}(X_{0})$ ($k\neq n,n-1$), with all
$\IH^{*}(X_{0})$ pure.
\end{proof}
So $H^{n+1}(X_{0})$ --- hence $H_{\lm}^{n+1}$ and $H_{\pha}^{n+1}:=\ker\{H^{n+1}(X_{0})\to H_{\lm}^{n+1}\}$
--- is pure of weight $n+1$ and level $\leq n-1$, hence contained
in $F^{1}$.
\begin{prop}
\label{prop2.1b} The spectrum of a du Bois ICIS \emph{(}resp. isolated
rational hypersurface singularity\emph{)}%
\footnote{Here we mean that $(X_{0},\xi)$ is du Bois or rational. In the hypersurface case, the converse of this proposition is contained in Prop. \ref{prop2.4a}. Note that for a rational singularity, $n>1$.%
} has support $|\sigma|\subset[1,n]$ \emph{(}resp. $(1,n)$\emph{)}.\end{prop}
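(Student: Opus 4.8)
The plan is to combine the general support bound $|\sigma_{\uf}|\subset[0,n+1)$ from Proposition \ref{prop1.2}(i) with the two Hodge-theoretic consequences of du Bois (resp. rational) singularities recorded in Theorem \ref{th-I1}, applied to the local-model degeneration $\cx\overset{t}{\to}\disc$ attached to the ICIS. Concretely, I would reduce the statement to showing that $\sigma_{\uf}$ has no part with $\alpha\in[0,1)$ (this kills the bottom of $[0,n+1)$) and, by the symmetry $\iota_n(\tilde\sigma_f)=\tilde\sigma_f$ of Proposition \ref{prop1.2}(ii) in the hypersurface case — and a separate but parallel argument in the ICIS case — no part with $\alpha\in(n,n+1)$ either; together these give $|\sigma|\subset[1,n]$ for du Bois, and then the stronger open interval $(1,n)$ for rational singularities by upgrading ``$\alpha\geq 1$'' to ``$\alpha>1$''.

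The key steps, in order: \emph{Step 1.} Recall from the set-up that $\alpha = p+\lambda$ where $V_{\uf,\lambda}^{p,q}\neq 0$, so the ``$F$-level'' of a spectral number is governed by $p$; in particular $\alpha\in[0,1)$ forces $p=0$ and $\lambda\in[0,1)$, i.e. a contribution to $\gr_F^0 V_{\uf}$ via \eqref{eq2.1a} (recall $V_{\uf}=\coker\{\can\}$ sits between $H^n_{\lm}(X_t)$ and $H^{n+1}_{\pha}$). \emph{Step 2.} Invoke Theorem \ref{th-I1}(i): since $(X_0,\xi)$ is du Bois, $\gr_F^0 H^n(X_0)\cong \gr_F^0 H^n_{\lm}(X_t)^{T^{\mathrm{ss}}}\cong\gr_F^0 H^n_{\lm}(X_t)$, so $\can$ is an isomorphism on $\gr_F^0$; chasing \eqref{eq2.1a} on $\gr_F^0$ and using that $H^{n+1}_{\pha}$ is pure of weight $n+1$ and contained in $F^1$ (Lemma \ref{lem2.1} and the paragraph after, giving level $\leq n-1$, hence $\gr_F^0 H^{n+1}_{\pha}=0$), one gets $\gr_F^0 V_{\uf}=0$, so no spectral number lies in $[0,1)$, i.e. $|\sigma_{\uf}|\subset[1,n+1)$ (and in the hypersurface case $(1,n+1)$, since $\sigma_f$ is already in the open interval by Prop. \ref{prop1.2}(ii) when $r=1$). \emph{Step 3.} For the upper end, in the hypersurface case apply $\iota_n$: $\alpha<1$ being excluded forces, by $\alpha\mapsto n+1-\alpha$, that $\alpha>n$ is excluded, giving $|\sigma_f|\subset[1,n]$ (resp. $(1,n)$ once Step 4 is done). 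For general ICIS I would instead dualize — the vanishing cohomology of an ICIS carries a perfect pairing $V_{\uf}\times V_{\uf}\to\QQ(-n)$ up to a sign/Tate twist (Milnor fiber has the homotopy type of a wedge of $n$-spheres; cf. the symmetry underlying Prop. \ref{prop1.2}), which interchanges $\gr_F^p$ with $\gr_F^{n-p}$ and $\lambda$ with $-\lambda$; the vanishing of $\gr_F^0$ then forces vanishing of $\gr_F^n$, and tracking eigenvalues shows the only surviving integral contribution at the ends would have $p=n$, $\lambda=0$, which contributes $\alpha=n$, consistent with $|\sigma|\subset[1,n]$. \emph{Step 4.} For the rational (hypersurface) case, strengthen Step 2: Theorem \ref{th-I1}(ii) gives $W_{n-1}\gr_F^0 H^n_{\lm}(X_t)=0$ and (iii) gives $\gr_F^1 H^n(X_0)\cong \gr_F^1 H^n_{\lm}(X_t)^{T^{\mathrm{ss}}}$; combined with the purity/level bound on $H^{n+1}_{\pha}$, a chase of \eqref{eq2.1a} on $\gr_F^1$ shows that the only possible $\gr_F^1 V_{\uf}$ is the non-unipotent part, hence any spectral number with $p=1$ must have $\lambda\neq 0$, i.e. $\alpha>1$ strictly; by symmetry $\alpha<n$ strictly, yielding $|\sigma|\subset(1,n)$.

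The main obstacle I expect is Step 4 (and its ICIS analogue in Step 3): Theorem \ref{th-I1}(ii)–(iii) is exactly the input that distinguishes rational from du Bois, but extracting from it the \emph{strict} inequality requires carefully separating the unipotent ($\lambda=0$) and non-unipotent parts of $V_{\uf}$ and checking that the unipotent part cannot sit in $\gr_F^1$ in weight $\leq n+1$ — here one must use that $H^{n+1}_{\pha}\subset F^1$ is pure of weight $n+1$ so it cannot absorb a class of weight $\leq n$, together with the fact (Prop. \ref{prop1.2}(ii)) that $N$-strings in $V_{f,0}$ are centered about weight $n+1$, so a $\gr_F^1$ class in $V_{f,0}$ of weight $\leq n$ would have to come (via $\can$) from $\gr_F^1 H^n_{\lm}(X_t)$, contradicting the isomorphism with $\gr_F^1 H^n(X_0)$ once one knows $H^n(X_0)$ has no such class — which is the content of rationality. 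Making this last contradiction airtight, rather than merely plausible, is the delicate point; everything else is a diagram chase on the graded pieces of \eqref{eq2.1a}.
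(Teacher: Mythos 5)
Your treatment of the du Bois case is essentially the paper's: Theorem \ref{th-I1}(i) together with $\gr_F^0 H^{n+1}_{\pha}=\{0\}$ (from Lemma \ref{lem2.1} and the level bound) forces $\gr_F^0 V_{\uf}=\{0\}$ in \eqref{eq2.1b}, and the upper end of the interval comes for free from the symmetry of the non-unipotent part of the spectrum under $\alpha\mapsto n+1-\alpha$. Your appeal to a ``perfect pairing'' $V_{\uf}\times V_{\uf}\to\QQ(-n)$ for a general ICIS is not quite right as stated (the intersection form on the Milnor fiber can be degenerate), but the symmetry you actually need is already recorded in Prop.~\ref{prop1.2}(i) (conjugation plus $N$-strings in $V_{\uf,\neq 0}$ centered about weight $n$), so this is cosmetic.

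The genuine gap is in Step 4. Reducing as you do to the $T^{\text{ss}}$-invariant part of \eqref{eq2.1a} in $\gr_F^1$, the isomorphism $\gr_F^1 H^n(X_0)\cong\gr_F^1 H^n_{\lm}(X_t)^{T^{\text{ss}}}$ leaves you with $\gr_F^1 V_{f,0}\cong\gr_F^1 H^{n+1}_{\pha}$; that is, $1\notin|\sigma_f|$ is \emph{equivalent} to $(H^{n+1}_{\pha})^{1,n}=\{0\}$, and none of the inputs you list can rule that class out. Indeed $H^{n+1}_{\pha}$ is pure of weight $n+1$ and contained in $F^1$, so a $(1,n)$-class there is perfectly compatible with purity and the level bound; and the corresponding class in $V_{f,0}$ sits in weight $n+1$, so it need not lie in the image of $\can$ and never interacts with $H^n_{\lm}$ --- your ``would have to come from $\gr_F^1 H^n_{\lm}$'' step only disposes of the weight-$\leq n$ part of $\gr_F^1 V_{f,0}$. (That such $(1,n)$ phantom classes are a real issue is exactly the point of Prop.~\ref{prop2.4b}, which needs extra geometric hypotheses to kill them for merely log-canonical singularities.) The paper's proof spends its entire second half on precisely this: it passes to a log resolution and semistable base change, uses the weight-monodromy spectral sequence to present $\gr_F^1\gr^W_{n+1}H^{n+1}_{\lm}$ as the cokernel \eqref{eq2.1c} with source $H^{0,n-1}(E^{[0]})$, and then invokes rationality in the form $R(\pi_0)_*\co_{\tilde{X}_0}=\co_{X_0}$ to obtain $H^{n-1}(\co_E)=\{0\}$, whence $H^{1,n}(\tilde{X}_0)$ injects into $H^{n+1}_{\lm}$ and $(H^{n+1}_{\pha})^{1,n}=\{0\}$ by purity of $H^{n+1}(X_0)$. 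Some such geometric input from the resolution is unavoidable here; a formal chase of \eqref{eq2.1a} cannot produce it.
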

\begin{proof}
For du Bois, apply Theorem \ref{th-I1}(i) (that $\gr^0_F H^n(X_0)\cong \gr^0_F H^n_{\lm}(X_t)$) and $\gr_{F}^{0}H_{\pha}^{n+1}=\{0\}$
to \eqref{eq2.1b}. 

For rational, Thm. \ref{th-I1}(ii) implies that $\gr_F^1 H^n_{\lm}(X_t) = \gr_F^1 H^n_{\lm}(X_t)^{T^{\text{un}}}$, hence (taking $T^{\text{ss}}$-invariants) $\gr_F^1 H^n_{\lm}(X_t)^{T^{\text{ss}}} = \gr_F^1 H^n_{\lm}(X_t)^{T}$.
Since $\cx$ is smooth ($r=1$), by Clemens-Schmid (cf. \eqref{I5}) $H^n(X_0)\cong H^n_{\lm}(X_t)^T$; and so $\gr_{F}^{1}H^{n}(X_{0})\cong\gr_{F}^{1}H_{\lm}^{n}(X_{t})^{T^{\text{ss}}}$.
Using \eqref{eq2.1b}, to show $1\notin|\sigma|$ it suffices to prove
that $\gr_{F}^{1}H_{\pha}^{n+1}=\{0\}$.

Let $\fx$ be the base-change of $\cx$ by $t\mapsto t^{\kappa}$,
where $(T^{\text{ss}})^{\kappa}=I$; and fix log resolutions $\cy\overset{\pi}{\to}\fx$
of $(\fx,X_{0})$ resp. $\cy'\overset{\pi'}{\to}\cx$ of $(\cx,X_{0})$,
with NCD singular fibers $\pi^{-1}(X_{0})=Y_{0}=\tilde{X}_{0}\cup\cE(\overset{\beta}{\hookleftarrow}\cE)$
resp. $(\pi')^{-1}(X_{0})=Y_{0}'=\tilde{X}_{0}'\cup\cE'$. We may
furthermore take $\cy\to\disc$ to be semistable. Write $\cE^{[0]}$
{[}resp. $\cE^{[1]}${]} for the disjoint union of components of $\cE$
{[}resp. their intersections{]}, and the same for $E$. By the weight
monodromy spectral sequence \cite[Cor. 11.23]{PS}, we have
\begin{equation} \label{eq2.1c}
\gr_F^1 \gr^W_{n+1} H^{n+1}_{\lm}(X_t)\cong  \mathrm{coker}\left\{ H^{0,n-1}(E^{[0]})\overset{\delta_0}{\to} \tfrac{H^{1,n}(\cE^{[0]})}{\delta_1 (H^{0,n-1}(\cE^{[1]}))}\oplus H^{1,n}(\tilde{X}_0)\right\}
\end{equation}where $\delta_{0}$ {[}resp. $\delta_{1}${]} is a sum of pushforward
maps along inclusions of components of $E^{[0]}$ {[}resp. $\cE^{[1]}${]}
into components of $\cE^{[0]}\amalg\tilde{X}_{0}$ {[}resp. $\cE^{[0]}${]}.

Now $X_{0}$ has isolated rational singularities $\Xi$, with preimage
$E$ under $\pi|_{\tilde{X}_{0}}=:\pi_{0}:\,\tilde{X}_{0}\to X_{0}$.
Taking stalks of $(n-1)^{\text{st}}$ cohomology sheaves of $R(\pi_{0})_{*}\mathcal{O}_{\tilde{X}_{0}}=\mathcal{O}_{X_{0}}$
at $\Xi$ gives\begin{equation} \label{eq2.1d}
H^{0,n-1}(E^{[0]}) = H^{n-1}(E)^{0,n-1} \subset \gr_F^0 H^{n-1}(E,\CC)=H^{n-1}(\mathcal{O}_E) = \{0\} \end{equation}since NCDs are du Bois (and $n>1$).

Finally, $H^{n+1}(X_{0})$ is pure hence injects into $H^{n+1}(\tilde{X}_{0})$,
so \begin{flalign*}
(H^{n+1}_{\pha})^{1,n} &= \gr^1_F \gr^W_{n+1} \ker \{ H^{n+1}(X_0)\to H^{n+1}_{\lm}\} \\
&\subseteq \gr^1_F \gr^W_{n+1} \ker \{ H^{n+1}(\tilde{X}_0)\to H^{n+1}_{\lm}\}.
\end{flalign*}This is clearly zero by \eqref{eq2.1c} and \eqref{eq2.1d}.
\end{proof}
Continuing with the notation from the second paragraph of the above
proof, we now discuss the terms of the generalized Clemens-Schmid
sequences \eqref{I3} before and after base-change. Clearly $\bar{H}^{k}(X_{0})=H^{k}(X_{0})$
for $k\neq n+1$, and $\bar{H}^{k}(\cE)=\{0\}$ for $k\neq n$. The
only interesting case is $k=n$, where $\bar{H}^{n}$ of $\cE^{(')}=\amalg_{\xi}\cE_{\xi}^{(')}$
measures the failure of the local invariant cycle property.

Define a ``coprimitive vector'' operator, defined on MHS with $(-1,-1)$-morphism
$N$, by 
\[
\mathscr{P}_{n}:=\sum_{j=0}^{n}W_{j}\cap\text{im}(N^{n-j}).
\]
The following elaborates on $\S$I.8.1, while also making
Theorem I.6.4(i) more precise:
\begin{prop}
\label{prop2.1c} Prior to base-change, we have\begin{equation} \label{eq2.1e}
\bar{H}^n(\cE ')=\frac{\oplus_{\xi} H^n(\cE_{\xi} ')}{\beta^* H^n_{\pha}(Y_0 ')} = \oplus_{\xi\in \Xi} \mathscr{P}_n \left( H^n_{\van}(X_{t,\xi})^{T^{\text{ss}}}\right) ,
\end{equation}which is zero if $r=1$ \emph{(}$\cx$ smooth\emph{)}. For the tail
after semistable base-change, we have\begin{equation} \label{eq2.1f}
\bar{H}^n(\cE) = \frac{\oplus_{\xi} H^n(\cE_{\xi})}{\beta^* H^n_{\pha}(Y_0)} = \oplus_{\xi \in \Xi} \mathscr{P}_n \left( H^n_{\van}(X_{t,\xi})\right) .
\end{equation}\end{prop}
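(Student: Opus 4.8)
The plan is to analyze the generalized Clemens–Schmid sequences \eqref{I3} for $k=n$ directly, identifying $\bar H^n(\cE^{(\prime)})$ as an explicit subquotient of $H^n_{\lm}(X_t)$ and then matching it termwise against the vanishing cohomology using the local-to-global structure coming from the isolated singularities $\Xi$. First I would recall the definitions of the bars from $\S$I.8.1: $\bar H^n(X_0)$ and $\bar H^n(\cE^{(\prime)})$ are the subquotients appearing in the generalized Clemens–Schmid sequence, and for $k\neq n$ the sequence degenerates because $H^k(X_0)$ is pure of weight $k$ (Lemma \ref{lem2.1}) and $\bar H^k(\cE^{(\prime)})=0$. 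So the first equality in each of \eqref{eq2.1e}, \eqref{eq2.1f} — namely $\bar H^n(\cE^{\prime})=\bigl(\oplus_\xi H^n(\cE_\xi^{\prime})\bigr)/\beta^* H^n_{\pha}(Y_0^{\prime})$ and likewise after base change — is essentially unwinding the definition of the bar in the semistable (resp. log-resolution) model, using that the tail $\cE^{(\prime)}$ meets $\tilde X_0^{(\prime)}$ along $E^{(\prime)}$ and that the relevant weight graded piece of $H^n_{\lm}$ decomposes accordingly; I would make this precise by the weight-monodromy spectral sequence on $Y_0^{(\prime)}$, exactly as in \eqref{eq2.1c}.

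The substantive content is the second equality, identifying this subquotient with $\oplus_{\xi}\mathscr P_n\bigl(H^n_{\van}(X_{t,\xi})^{T^{\text{ss}}}\bigr)$ before base change and with $\oplus_\xi\mathscr P_n\bigl(H^n_{\van}(X_{t,\xi})\bigr)$ after. Here I would localize: since the singularities are isolated, the vanishing cycle sheaf is supported on $\Xi$ and $H^n_{\van}(X_t)=\oplus_\xi H^n_{\van}(X_{t,\xi})$, so the whole computation factors through a disjoint union of local Milnor-fiber computations, reducing us to the case $|\Xi|=1$. For a single ICIS, the tail $\cE$ (resp.\ $\cE^{\prime}$) is the exceptional locus of a log-resolution of the (base-changed, resp.\ original) total space, and $\bar H^n$ of it is governed by the local invariant cycle picture: it measures the failure of surjectivity of $H^n(\tilde X_0)\to (H^n_{\lm})^{T}$ (resp.\ $(H^n_{\lm})^{T^\kappa}$) near $\xi$. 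The operator $\mathscr P_n=\sum_{j=0}^n W_j\cap\image(N^{n-j})$ is designed precisely to pick out, from an $\mathfrak{sl}_2\times\langle T^{\text{ss}}\rangle$-representation, the coprimitive vectors at each weight; applying the $\mathrm{SL}_2$-orbit theorem to $H^n_{\lm}(X_{t,\xi})$ and using the $N$-string structure from Proposition \ref{prop1.2} (strings of length $\le n$, centered about $w=n$ on $V_{\uf,\neq 0}$; length $\le n-1$ centered about $w=n+1$ on $V_{\uf,0}$), one sees that the piece of $V_{\uf}$ not captured by $H^n(X_0)$ via $\sp$ and not killed by $\can$ is exactly $\mathscr P_n$ of the appropriate $T^{\text{ss}}$-eigenspaces — invariants only, before base change, since $\cE^{\prime}$ only sees the $T$-invariant part; all eigenvalues after base change by $\kappa$, since then $(T^{\text{ss}})^\kappa=I$ and the semistable model sees all of $V_{\uf}$. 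The case $r=1$ vanishing in \eqref{eq2.1e} is then immediate from $H^n(X_0)\cong H^n_{\lm}(X_t)^T$ (Clemens–Schmid \eqref{I5}), which forces $\mathscr P_n$ of the invariants to be zero.

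I would organize the eigenspace bookkeeping by separating $V_{\uf}=V_{\uf,0}\oplus V_{\uf,\neq 0}$: on $V_{\uf,\neq 0}$ there is no contribution from $H^n(X_0)$ (which is the invariant part), and $\mathscr P_n$ of the full eigenspaces reproduces the tail after base change, while before base change these eigenspaces do not appear in $(H^n_{\lm})^T$ at all, consistent with taking $T^{\text{ss}}$-invariants in \eqref{eq2.1e}; on $V_{\uf,0}$ one uses \eqref{eq2.1a}/\eqref{eq2.1b} together with purity of $H^{n+1}_{\pha}$ (weight $n+1$, level $\le n-1$, Lemma \ref{lem2.1} and the discussion after it) to see that the relevant coprimitive vectors of weight $\le n$ come entirely from the tail and not from phantom cohomology. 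I expect the main obstacle to be the careful reconciliation of the two models $\cy\to\fx\to\cx$ and $\cy^{\prime}\to\cx$ — i.e.\ checking that passing to $\beta^*$-coinvariants in $H^n_{\pha}(Y_0^{(\prime)})$ really does correspond, on the Milnor-fiber side, to applying $\mathscr P_n$ to $T^{\text{ss}}$-invariants (resp.\ all of $V_{\uf}$), rather than to some other canonical subquotient. This amounts to tracking weights through the Mayer–Vietoris/weight spectral sequence for $\tilde X_0^{(\prime)}\cup_{E^{(\prime)}}\cE^{(\prime)}$ and matching the coprimitive filtration there with the monodromy-weight filtration on $V_{\uf}$; the $\mathrm{SL}_2$-orbit theorem and the string bounds of Proposition \ref{prop1.2} are the tools that make this matching forced rather than merely plausible.
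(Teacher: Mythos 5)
Your proposal is correct and follows essentially the same route as the paper: the first equalities come from unwinding the definition of the bars via the generalized Clemens--Schmid sequences \eqref{I3} (i.e.\ (I.8.3)) together with the purity statement of Lemma \ref{lem2.1}, the second equalities come from observing that $\mathscr{P}_n$ extracts exactly the part of each $H^n_{\van}(X_{t,\xi})$ (resp.\ its $T^{\text{ss}}$-invariants) hit by $\can$ from the $T^{\text{un}}$-invariants of $H^n_{\lm}$ but not accounted for by $H^n(X_0)$, using the $N$-string structure of Proposition \ref{prop1.2}, and the $r=1$ vanishing is the invariant cycle theorem from \eqref{I5}. Your localization to a single ICIS and the eigenspace bookkeeping separating $V_{\uf,0}$ from $V_{\uf,\neq 0}$ are just a more explicit rendering of the same argument.
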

\begin{proof}
That the invariant cycle theorem holds for $r=1$ is just the second
line of \eqref{I5}. The rest follows from (I.8.3), Lemma
\ref{lem2.1}, and the fact that $\mathscr{P}_{n}$ takes the lowest-weight
vector of each $\mathfrak{sl}_{2}$-string centered about $n$, but
not $n+1$ (the only two possibilities). This ensures that it picks
out precisely the images under $\can$ of $T^{\text{un}}$-invariant
vectors in $H_{\lm}^{n}$.\end{proof}
\begin{rem}
By purity of $H_{\pha}^{n}(Y_{0})$ (Thm. \ref{th-I2}(i)), Remark
I.8.1, and (I.7.1),
\[
H_{\pha}^{n}(Y_{0})\cong H_{n+2}(\cE)(-n-1)\oplus\IH_{\pha}^{n}(\fx)\cong\frac{\oplus_{\xi}H^{n-2}(\cE_{\xi})(-1)}{\text{im}(\delta_{1})}\oplus\IH_{\pha}^{n}(\fx),
\]
where the first term arises via ``$\imath^{*}\imath_{*}$'' (pushing
forward along $\cE^{[0]}\to\cy$, then pulling back to $Y_{0}$).
It would be interesting to know when there is a ``minimal'' choice
of log resolution for which $H_{\pha}^{n}(Y_{0})=\IH_{\pha}^{n}(\fx)$,
and whether this sharpens \eqref{eq2.1f}.
\end{rem}

\section{The quasi-homogeneous case\label{S2.2}}
We now specialize to isolated hypersurface singularities.  In this section, we discuss the general class of hypersurface singularities for which understanding the vanishing cohomology and the spectrum is easiest, namely the quasi-homogeneous singularities. We review the original computation due to Steenbrink \cite{St1}, and relate it to cohomology of the tail $\cE\backslash E$. The main point here is that for a general smoothing $\fx$ of a quasi-homogeneous hypersurface singularity, a single weighted blow-up will produce a partial resolution and an almost normal-crossings degeneration. The remaining singularities (for the central fiber and total space) will be finite quotient singularities, and Hodge theory and Griffiths's residue calculus work well in this type of situation, leading to explicit formulas in terms of the weights (e.g. see Theorem \ref{T2.2}). In the later subsections $\S\S$\ref{S-wf}-\ref{s-schoen}, we consider various examples and amplifications (deformations, period maps, nodes).

Let
$\uw\in\QQ_{>0}^{n+1}$ be a weight vector. Writing $\uz=(z_{1},\ldots,z_{n+1})$
and $\mathfrak{M}(\uw)=\{\underline{m}\in\ZZ_{\geq0}^{n+1}\mid\underline{m}\cdot\uw=1\}$,
consider \begin{equation} \label{eq2.2a}
f:=\sum_{\underline{m}\in\mathfrak{M}(\uw)}a_{\underline{m}}\uz^{\underline{m}}:\;\CC^{n+1}\to\CC,
\end{equation}where $\{a_{\um}\}\subset\CC$ are chosen to make $\{\uo\}$ an isolated (hence the unique) singularity of $\{f(\uz)=0\}$.
\footnote{Equivalently, since $f$ is quasi-homogeneous, $\hat{E}\subset\mathbf{H}$ (see below) is a quasi-smooth hypersurface, cf. \cite[3.1.5]{DoWP}}
Let $\fb\subset\ZZ_{\geq0}^{n+1}$ be such
that $\{\uz^{\fb}\}$ is a basis of 
\[
R:=\frac{\CC\{\uz\}}{J(f)}\cong\frac{\CC[\uz]}{J(f)},
\]
where $J(f)=\left(\tfrac{\partial f}{\partial z_{1}},\ldots,\tfrac{\partial f}{\partial z_{n+1}}\right).$
Corresponding to this isomorphism (and special to the quasi-homogeneous
case) is a homeomorphism between the Milnor fiber $\fF_{f,t_{0}}$
and $Z_{f}:=\{f(\uz)=1\}$.

More precisely, writing $w_{i}=\tfrac{u_{i}}{v_{i}}$ ($u_{i},v_{i}\in\mathbb{N}$
relatively prime), $d=\mathrm{lcm}\{v_{i}\}$, and $\tilde{w}_{i}=w_{i}d$,
let $\fz=\{f(\uz)=t^{d}\}\subset\CC^{n+2}$ be the base-change (of
$\CC^{n+1}$), and \[
\xymatrix{**[l] \mathrm{Bl}_{\utw}(\CC^{n+2}) \supset \fy  \ar @{->>} [r]_{\pi} \ar @/^1pc/ [rr]^{g} & \fz \ar [r]_t & \CC }
\]its weighted blow-up at $\uo$ (cf.~Remark I.8.4). The fiber $g^{-1}(0)=:\hat{Y}_{0}$
is the union of the proper transform $\hat{Z}_{0}$ of $Z_{0}\cong\{f(\uz)=0\}\subset\CC^{n+1}$
and the exceptional divisor\begin{equation} \label{eq2.2b}
\hat{\cE}:=\{f(\underline{\mathsf{Z}})=\mathsf{T}^{d}\}\subset\mathbb{WP}^{n+1}[1:\utw]=:\mathbf{P},
\end{equation}with\begin{equation} \label{eq2.2c}
\hat{\cE}\cap\hat{Z}_{0}=:\hat{E}=\{f(\underline{\mathsf{Z}})=0\}\subset\mathbb{WP}^{n}[\utw]=:\mathbf{H},
\end{equation}and obvious isomorphisms
\[
\mathbf{P}\backslash\hat{\cE}\cup\mathbf{H}\cong\CC^{n+1}\backslash Z_{f}\;\;\;\text{and}\;\;\;\hat{\cE}\backslash\hat{E}\cong Z_{f}.
\]
The hats are there to remind the reader of the possible presence of
singularities, which are however insignificant for the Hodge-theoretic
analysis. Indeed, by Prop. I.8.3 and Remark I.8.4, the
proof of Prop. I.6.3 (i.e. \eqref{I6}) extends verbatim to identify the vanishing
cohomology:
\begin{prop}
\label{prop2.2a} We have $\pphi_{f}\QQ_{\CC^{n+1}}[n+1]\cong(\imath_{\uo})_{*}V_{f}$,
where
\[
V_{f}\cong H^{n}(\hat{\cE}\backslash\hat{E})\cong H^{n}(Z_{f}).
\]

\end{prop}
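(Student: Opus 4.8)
The plan is to dispatch the three isomorphisms in turn. The first is essentially formal: since $f$ has an isolated critical point at $\uo$, the complex $\phi_f\QQ_{\CC^{n+1}}$ is supported on $\{\uo\}$, so $\pphi_f\QQ_{\CC^{n+1}}[n+1]$ is a perverse sheaf supported at a point and hence a skyscraper $(\imath_{\uo})_*V$; and $V=\imath_{\uo}^*\pphi_f\QQ_{\CC^{n+1}}[n+1]$ is $V_f$ by the very definition given at the start of $\S$\ref{S2.1}. The last isomorphism is tautological: the weighted blow-up construction exhibits a biholomorphism $\hat\cE\setminus\hat E\cong Z_f$ of quasi-projective varieties, whence $H^n(\hat\cE\setminus\hat E)\cong H^n(Z_f)$ as mixed Hodge structures. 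So everything is in the middle isomorphism $V_f\cong H^n(\hat\cE\setminus\hat E)$.

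On the level of $\QQ$-vector spaces I would argue by weighted homogeneity: the $\utw$-weighted $\CC^*$-action on $\CC^{n+1}$ satisfies $f(\lambda\cdot\uz)=\lambda^d f(\uz)$, so $f\colon\CC^{n+1}\setminus Z_0\to\CC^*$ is a topologically locally trivial fibration with fiber $Z_f$, and its restriction to a small punctured disk about $0$ is equivalent to the Milnor fibration of $f$ at $\uo$ (the $\CC^*$-flow deformation-retracts the affine fibers onto the local Milnor fibers). Hence $\fF_{f,t_0}\simeq Z_f$, and as $Z_f$ is a connected smooth affine $n$-fold this gives $V_f=\tilde H^n(\fF_{f,t_0})\cong\tilde H^n(Z_f)=H^n(Z_f)$ (for $n\geq1$, the only case of interest here).

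To promote this to an isomorphism of mixed Hodge structures --- which is where the real content lies --- I would proceed exactly as indicated in the paragraph preceding the Proposition, passing to the geometric model: the $d$-th root base-change $\fz=\{f(\uz)=t^d\}$ followed by the weighted blow-up $\pi\colon\fy\to\fz$ at $\uo$ yields a projective degeneration $g=t\circ\pi\colon\fy\to\Delta$ whose central fiber is the reduced divisor $\hat Y_0=\hat Z_0\cup_{\hat E}\hat\cE$, normal-crossing up to finite-quotient singularities, and with $(T^{\text{ss}})^d=I$. This is precisely the situation of Prop. I.6.3 (i.e. \eqref{I6}), which identifies $H^n_{\van}$ as a mixed Hodge structure with the reduced cohomology of the tail complement, \emph{except} that $\fy$, $\hat\cE$, and $\hat E$ carry finite-quotient singularities rather than being smooth. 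The one real obstacle is to see that this is harmless, and that is exactly the content of Prop. I.8.3 and Remark I.8.4: over $\QQ$ a $V$-manifold behaves like a manifold (its constant sheaf equals its intersection complex up to shift, its cohomology carries a functorial Hodge structure satisfying purity and Poincar\'e duality, and the weight--monodromy spectral sequence applies), so the proof of \eqref{I6} goes through verbatim with $(\cE,E)$ replaced by $(\hat\cE,\hat E)$. This yields $V_f=H^n_{\van}(X_t)\cong\tilde H^n(\hat\cE\setminus\hat E)=H^n(\hat\cE\setminus\hat E)$ as mixed Hodge structures, and combined with the first and last isomorphisms completes the proof.
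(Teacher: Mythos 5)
Your proposal is correct and follows essentially the same route as the paper, whose entire argument is the one-line observation that Prop.~I.8.3 and Remark~I.8.4 let the proof of Prop.~I.6.3 (i.e.\ \eqref{I6}) run verbatim in the quasi-smooth setting of the base-changed, weighted-blown-up family $\fy\to\disc$. Your additional remarks (the skyscraper reduction for the first isomorphism, and the global $\CC^*$-fibration argument identifying the Milnor fiber with $Z_f$ topologically) are correct supplements already implicit in the surrounding text, not a different method.
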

For each $\ub\in\fb$, set
\[
\ell(\ub):=\frac{1}{d}\sum_{i=1}^{n+1}(\beta_{i}+1)\tilde{w}_{i}\;,\;\;\;\;\alpha(\ub):=n+1-\ell(\ub)\,,
\]
and define a rational form by
\[
\omega_{\ub}:=\frac{\uz^{\ub}dz_{1}\wedge\cdots\wedge dz_{n+1}}{(f(\uz)-1)^{\left\lceil \ell(\ub)\right\rceil }}\in\Omega^{n+1}(\CC^{n+1}\backslash Z_{f})\,,
\]
with class $[\omega_{\ub}]\in H^{n+1}(\CC^{n+1}\backslash Z_{f})$
and image
\[
\eta_{\ub}:=\mathrm{Res}_{Z_{f}}([\omega_{\ub}])\in V_{f}
\]
under the residue map $H^{n+1}(\CC^{n+1}\backslash Z_{f})\overset{\cong}{\underset{\mathrm{Res}}{\to}}H^{n}(Z_{f}).$

For what follows, it will be convenient to write $\{c\}\in [0,1)\cap\QQ$ for the
fractional part of $c\in\QQ$, and $\langle c\rangle:=1+\lfloor c\rfloor+\lfloor-c\rfloor$
($1$ for $c\in\ZZ$, $0$ otherwise).
\begin{thm}\label{T2.2}
The $\{\eta_{\ub}\}_{\ub\in\fb}$ give a basis of $V_{f}$, with $\eta_{\ub}\in V_{f,\{\alpha(\ub)\}}^{\left\lfloor \alpha(\ub)\right\rfloor ,\left\lfloor \ell(\ub)\right\rfloor }$\emph{;}
and $N$ acts trivially%
\footnote{$N$ may still act nontrivially on $H_{\lm}$ in \eqref{eq2.1a} (though
with $N^{2}=0$).%
} on $V_{f}$.\end{thm}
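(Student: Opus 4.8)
The plan is to transport everything to the affine hypersurface $Z_{f}=\{f=1\}$, where quasi-homogeneity provides both a large symmetry and an explicit Griffiths-type residue calculus. By Proposition \ref{prop2.2a} we already have $V_{f}\cong H^{n}(Z_{f})$, so it suffices to exhibit $\{\eta_{\ub}\}$ as a basis of $H^{n}(Z_{f})$ and to read off, for each $\eta_{\ub}$, the $T^{\mathrm{ss}}$-eigenvalue, the Hodge--Deligne bidegree, and the action of $N$. The triviality of $N$ is the quickest point: since $f$ is quasi-homogeneous, its geometric monodromy on $Z_{f}$ is represented by the diffeomorphism $h\colon\uz\mapsto(e^{2\pi i w_{1}}z_{1},\dots,e^{2\pi i w_{n+1}}z_{n+1})$, whose order divides $d$; hence $T$ is of finite order, a fortiori semisimple, so $N=0$ on $V_{f}$ (while, as the footnote warns, $N$ may still be nonzero on $H^{n}_{\lm}$ in \eqref{eq2.1a}).

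For the basis, combine the residue (tube) isomorphism $H^{n+1}(\CC^{n+1}\setminus Z_{f})\cong H^{n}(Z_{f})$ with Grothendieck's algebraic--de Rham theorem: every class on the affine variety $\CC^{n+1}\setminus Z_{f}$ is represented by a rational form $\tfrac{P(\uz)\,dz_{1}\wedge\cdots\wedge dz_{n+1}}{(f-1)^{k}}$. Writing $P=\sum_{i}(\partial_{i}f)Q_{i}+P_{\mathrm{red}}$ with $P_{\mathrm{red}}\in\bigoplus_{\ub\in\fb}\CC\,\uz^{\ub}$ and integrating by parts (subtracting $d\bigl(\tfrac{1}{(f-1)^{k-1}}\sum_{i}\pm Q_{i}\,dz_{1}\wedge\cdots\widehat{dz_{i}}\cdots\wedge dz_{n+1}\bigr)$) lowers the pole order at the cost of replacing the numerator by $\sum_{i}\pm\partial_{i}Q_{i}$; iterating, one reduces to numerators in $\bigoplus_{\ub\in\fb}\CC\uz^{\ub}$, and the weighted-homogeneous degree bookkeeping pins the surviving pole order on $\uz^{\ub}$ to be $\lceil\ell(\ub)\rceil$. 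This yields a surjection $R\twoheadrightarrow H^{n}(Z_{f})$, $\uz^{\ub}\mapsto\eta_{\ub}$; since $\dim_{\CC}R=\mu_{f}=\dim_{\CC}H^{n}(Z_{f})$ it is an isomorphism, so $\{\eta_{\ub}\}_{\ub\in\fb}$ is a basis.

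The $T^{\mathrm{ss}}$-eigenvalue is a one-line computation: $h^{*}(\uz^{\ub}\,dz_{1}\wedge\cdots\wedge dz_{n+1})=e^{2\pi i\sum_{i}(\beta_{i}+1)w_{i}}(\cdots)=e^{2\pi i\ell(\ub)}(\cdots)$ while $f-1$ is $h$-invariant, so $\eta_{\ub}$ is an eigenvector with eigenvalue $e^{\pm2\pi i\ell(\ub)}$; with the paper's orientation conventions the sign is the one making $\eta_{\ub}\in V_{f,\{\alpha(\ub)\}}$ (note $\alpha(\ub)=n+1-\ell(\ub)$, so $e^{2\pi i\alpha(\ub)}=e^{-2\pi i\ell(\ub)}$). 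For the Hodge bidegree, the crucial input is that on $H^{n+1}(\CC^{n+1}\setminus Z_{f})$ — equivalently, by Proposition \ref{prop2.2a}, on $H^{n}(\hat{\cE}\setminus\hat{E})$ for the quasi-smooth weighted-projective hypersurface $\hat{\cE}\subset\mathbf{P}$ — the Hodge filtration coincides with the filtration by order of pole along $\hat{\cE}$, a feature special to the quasi-homogeneous case. Granting this, the pole order $\lceil\ell(\ub)\rceil$ gives $\eta_{\ub}\in F^{\,n+1-\lceil\ell(\ub)\rceil}=F^{\lfloor\alpha(\ub)\rfloor}$, and since the reduction of pole order respects the $\CC^{\times}$-grading, the $\eta_{\ub}$ with prescribed $\lfloor\alpha(\ub)\rfloor$ span $\gr_{F}^{\lfloor\alpha(\ub)\rfloor}$ exactly, forcing $\eta_{\ub}\in\gr_{F}^{\lfloor\alpha(\ub)\rfloor}$. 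Finally $N=0$ together with Proposition \ref{prop1.2} fixes the weight — namely $n$ on $V_{f,\neq0}$ and $n+1$ on $V_{f,0}$, which in either case equals $\lfloor\alpha(\ub)\rfloor+\lfloor\ell(\ub)\rfloor$ — so $\eta_{\ub}\in V_{f,\{\alpha(\ub)\}}^{\lfloor\alpha(\ub)\rfloor,\lfloor\ell(\ub)\rfloor}$.

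The main obstacle is precisely the identification of the Hodge filtration with the pole-order filtration on $H^{n}(\hat{\cE}\setminus\hat{E})$, which is false for open varieties in general. One establishes it either by Steenbrink's direct contracting-$\CC^{\times}$-homotopy argument on $Z_{f}$, or by importing Griffiths--Dolgachev residue theory to the quasi-smooth weighted-projective hypersurface $\hat{\cE}$ (the finite quotient singularities of $\mathbf{P}$ being invisible to $\QQ$-coefficient Hodge theory) and separating the pole of $\omega_{\ub}$ along $\mathbf{H}=\{\mathsf{T}=0\}$ from that along $\hat{\cE}$, so that the order along $\hat{\cE}$ is exactly $\lceil\ell(\ub)\rceil$. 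Matching this normalization against the $(p,q)$-indexing of \eqref{msp} and Definition \ref{def-spec} — and keeping straight the reflection $\ell\leftrightarrow\alpha=n+1-\ell$ relative to Steenbrink's convention — is where the genuine care is required; cf. \cite{St1}.
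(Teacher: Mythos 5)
Your argument is correct, and it rests on the same engine as the paper's sketch -- Proposition \ref{prop2.2a} plus Steenbrink's identification of the Hodge filtration on $H^{n}(Z_{f})\cong H^{n}(\hat{\cE}\setminus\hat{E})$ with the pole-order filtration \cite[Thm.~1]{St1}, which you rightly isolate as the one genuinely nontrivial input. The differences are in how you pin down the remaining data, and they are worth recording. For the basis you run the Griffiths pole-reduction and the count $|\fb|=\dim_{\CC}R=\mu_{f}=\dim H^{n}(Z_{f})$ explicitly, where the paper simply cites Steenbrink; this is a fair unwinding of that citation. For $N=0$ you use that the geometric monodromy of a quasi-homogeneous polynomial is the finite-order map $\uz\mapsto(e^{2\pi i w_{1}}z_{1},\ldots)$, whose $d$-th power is the identity since $dw_{i}=\tilde{w}_{i}\in\ZZ$ -- the classical Milnor argument, logically independent of any Hodge-theoretic computation -- whereas the paper deduces $N=0$ only after the residue analysis shows the weight filtration has length $\leq 1$. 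For the weight index $\lfloor\ell(\ub)\rfloor$ you then invoke $N=0$ together with the string-centering of Proposition \ref{prop1.2} (weight $n$ on $V_{f,\neq 0}$, weight $n+1$ on $V_{f,0}$), which requires first knowing the $T^{\text{ss}}$-eigenvalue of $\eta_{\ub}$; the paper instead reads the weight off directly from whether $\omega_{\ub}$ acquires a pole along $\mathbf{H}$, i.e.\ whether the class arrives via $\mathrm{Res}_{\hat{E}}\mathrm{Res}_{\mathbf{H}}$ in $H^{n-1}(\hat{E})(-1)$ (the $\ell(\ub)\in\ZZ$ case) or via $\mathrm{Res}_{\hat{\cE}}$ in $H^{n}_{\text{prim}}(\hat{\cE})$. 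The paper's route has the advantage of producing the weight and the $N=0$ statement simultaneously and geometrically; yours buys a shorter path to $N=0$ at the cost of leaning on Proposition \ref{prop1.2}(ii), which is available here since we are in the $r=1$ hypersurface case and its proof (via \eqref{I5}) does not use the present theorem. The sign ambiguity you flag in the eigenvalue $e^{\pm 2\pi i\ell(\ub)}$ is resolved exactly as in the paper's computation $\zeta_{d}^{-d\ell(\ub)}=e^{2\pi i\{\alpha(\ub)\}}$, so nothing is missing there.
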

\begin{proof}[Sketch]
 The ``basis'' assertion is Steenbrink's weighted-projective extension
of Griffiths's residue theory \cite[Thm.~1]{St1}.  For the remaining assertions, rewrite
\begin{equation}\label{th2.2eq}
\omega_{\ub}=\frac{\mathsf{T}^{d\{-\ell(\ub)\}-1}\underline{\mathsf{Z}}^{\ub}\Omega_{\mathbf{P}}}{(f(\underline{\mathsf{Z}})-\mathsf{T}^d)^{\lceil \ell(\ub)\rceil}}
\end{equation}
in weighted projective coordinates, where $$\Omega_{\mathbf{P}}=\mathsf{T}d\mathsf{Z}_{1}\wedge\cdots\wedge d\mathsf{Z}_{n+1}-\mathsf{Z}_{1}d\mathsf{T}\wedge\cdots\wedge d\mathsf{Z}_{n+1}+\cdots+(-1)^{n+1}\mathsf{Z}_{n+1}d\mathsf{T}\wedge\cdots\wedge d\mathsf{Z}_{n}.$$
(The power of $\mathsf{T}$ is the one that makes $\omega_{\ub}$ a well-defined section of $\Omega_{\mathbf{P}}^{n+1}(*\hat{\cE})$ over $\mathbf{P}\setminus\mathbf{H}$.) This blows up at $\infty$ (i.e.~along $\mathbf{H}=\{\mathsf{T}=0\}$) iff $\ell(\ub)\in\ZZ$, in which case it has a \emph{simple} pole.
Conclude that for $\ell(\ub)\in\ZZ$, 
\begin{equation*}
\mathrm{Res}_{\mathbf{H}}\omega_{\ub}=\frac{\underline{\mathsf{Z}}^{\ub}\Omega_{\mathbf{H}}}{f(\underline{\mathsf{Z}})^{\ell(\ub)}}\;\;\;\implies\;\;\;
\mathrm{Res}_{\hat{E}}\mathrm{Res}_{\mathbf{H}}\omega_{\ub}\in\gr_{F}^{n-\ell(\ub)}H_{\text{prim}}^{n-1}(\hat{E})=H^{n}(\hat{\cE}\backslash\hat{E})^{n+1-\ell(\ub),\ell(\ub)} ,
\end{equation*} while if $\ell(\ub)\notin\ZZ$, 
\[
\mathrm{Res}_{\hat{E}}\omega_{\ub}\in\gr_{F}^{n+1-\lceil\ell(\ub)\rceil}H_{\text{prim}}^{n}(\hat{\cE})=H^{n}(\hat{\cE}\backslash\hat{E})^{n-\lfloor\ell(\ub)\rfloor,\lfloor\ell(\ub)\rfloor}.
\]
The weight filtration therefore has length $\leq1$, and so $N$ acts
trivially. On the other hand, the action of $T^{\text{ss}}$ on $V_{f}\cong H^{n}(\hat{\cE}\backslash\hat{E})$
is induced by $\mathsf{T}\mapsto\zeta_{d}\mathsf{T}$, or equivalently
\[
[\mathsf{T}:\mathsf{Z}_{1}:\cdots:\mathsf{Z}_{n+1}]\longmapsto[\mathsf{T}:\zeta_{d}^{-\tilde{w}_{1}}\mathsf{Z}_{1}:\cdots:\zeta_{d}^{-\tilde{w}_{n+1}}\mathsf{Z}_{n+1}].
\]
This affects only the numerator of \eqref{th2.2eq}, so its eigenvalue on $\omega_{\ub}$ is $\zeta_{d}^{d\{-\ell(\ub)\}}=\zeta_{d}^{d\{\alpha(\ub)\}}=e^{2\pi\sqrt{-1}\{\alpha(\ub)\}}$.
\end{proof}
\begin{rem}
Notice that by the triviality of $N$, we have $\mathscr{P}_{n}(V_{f})=W_{n}V_{f}$
($\cong\bar{H}^{n}(\cE)$ in \eqref{eq2.1d}).\end{rem}
\begin{cor}
\label{cor2.1a} We have $\mu_{f}=\mu^0_{f}=|\fb|$, and
\[
\tilde{\sigma}_{f}=\sum_{\ub\in\fb}\left[(\alpha(\ub),n+\langle\alpha(\ub)\rangle)\right].
\]

\end{cor}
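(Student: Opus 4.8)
The plan is to derive the corollary purely by unwinding definitions against Theorem \ref{T2.2}; no new geometric input is needed.

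\textbf{Numerical identities.} Since $\{\eta_{\ub}\}_{\ub\in\fb}$ is a basis of $V_f$ by Theorem \ref{T2.2}, we get $\mu_f=\dim_{\CC}V_f=|\fb|$. Since the same theorem shows $N$ acts trivially on $V_f$, the primitive vanishing cohomology is $P_f=\mathrm{coker}(N\colon V_f\to V_f)=V_f$, whence $\mu^0_f=\dim P_f=\mu_f=|\fb|$. (This also squares with Proposition \ref{prop1.2}, the $N$-strings all having length $0$.)

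\textbf{The mixed spectrum.} By Theorem \ref{T2.2} each basis vector $\eta_{\ub}$ lies in the single Hodge--Deligne, $T^{\mathrm{ss}}$-eigenpiece $V_{f,\{\alpha(\ub)\}}^{\lfloor\alpha(\ub)\rfloor,\lfloor\ell(\ub)\rfloor}$. As the $\{\eta_{\ub}\}$ form a basis, for each $(\lambda,p,q)$ the dimension $\dim V_{f,\lambda}^{p,q}$ equals $\#\{\ub\in\fb\mid \{\alpha(\ub)\}=\lambda,\ \lfloor\alpha(\ub)\rfloor=p,\ \lfloor\ell(\ub)\rfloor=q\}$. Substituting this into the formula $\tilde{\sigma}_f=\sum_{\lambda,p,q}\dim(V_{f,\lambda}^{p,q})[(p+\lambda,p+q)]$ of Definition \ref{def-spec} collapses the triple sum into $\tilde{\sigma}_f=\sum_{\ub\in\fb}\bigl[\bigl(\lfloor\alpha(\ub)\rfloor+\{\alpha(\ub)\},\ \lfloor\alpha(\ub)\rfloor+\lfloor\ell(\ub)\rfloor\bigr)\bigr]$.

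\textbf{Identifying the coordinates.} It remains to match the two coordinates with $\alpha(\ub)$ and $n+\langle\alpha(\ub)\rangle$. The first is immediate: $\lfloor\alpha(\ub)\rfloor+\{\alpha(\ub)\}=\alpha(\ub)$. For the second, use $\ell(\ub)=(n+1)-\alpha(\ub)$ to write $\lfloor\alpha(\ub)\rfloor+\lfloor\ell(\ub)\rfloor=\lfloor\alpha(\ub)\rfloor+(n+1)+\lfloor-\alpha(\ub)\rfloor=(n+1)+(\langle\alpha(\ub)\rangle-1)=n+\langle\alpha(\ub)\rangle$, invoking the defining identity $\langle c\rangle=1+\lfloor c\rfloor+\lfloor-c\rfloor$. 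This gives the asserted formula. The only thing requiring a moment's care is checking that this uniform floor bookkeeping is compatible with the two cases in the proof of Theorem \ref{T2.2} — in the integral case $\eta_{\ub}$ sits in bidegree $(\alpha(\ub),\ell(\ub))$ with weight $\alpha(\ub)+\ell(\ub)=n+1$, and in the non-integral case in $(n-\lfloor\ell(\ub)\rfloor,\lfloor\ell(\ub)\rfloor)$ with weight $n$ — but both are subsumed by the single expression above. There is no substantial obstacle here; all the real content sits in Theorem \ref{T2.2}.
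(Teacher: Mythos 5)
Your proof is correct and is exactly the intended argument: the paper gives no separate proof of Corollary \ref{cor2.1a}, treating it as an immediate unwinding of Theorem \ref{T2.2} together with the definitions of $\tilde{\sigma}_f$, $\mu_f^0$, and $\langle\cdot\rangle$, which is precisely what you do. The floor/fractional-part bookkeeping via $\langle c\rangle=1+\lfloor c\rfloor+\lfloor -c\rfloor$ and the check against the two cases of the residue computation are both right.
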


\subsection{Weighted Fermat singularities and deformations}\label{S-wf}

We now turn to some examples of quasi-homogeneous singularities.  To begin, suppose that $f=\sum_{i=1}^{n+1}z_{i}^{d_{i}}$, with $d=\mathrm{lcm}\{d_{i}\}$;
that is, $f$ defines a \emph{Fermat quasi-homogeneous} (FQH) singularity.  (These are also called \emph{Brieskorn-Pham} singularities in the literature.)
In follows at once from Cor. \ref{cor2.1a} that 
\[
\fb=\left(\times_{i=1}^{n+1}[0,d_{i}-2]\right)\cap\ZZ^{n+1},
\]
so that $\mu_{f}=\prod_{i=1}^{n+1}(d_{i}-1)$ and (using the symmetry $\imath_n$) $\sigma_{f}=\sum_{\ub\in\fb}\left[\sum_{i=1}^{n+1}\tfrac{\beta_{i}+1}{d_{i}}\right]$.
In particular,
\[
\sigma_{f}^{\text{min}}:=\text{min}\{\alpha\mid\alpha\in|\sigma_{f}|\}=\sum_{i=1}^{n+1}\frac{1}{d_{i}}.
\]
As we shall see in $\S$\ref{S2.4} in more generality,
\footnote{The forward implication already follows from Prop. \ref{prop2.1b}.%
} 
\[
(Z_{f},\uo)\;\text{is }\left\{ \begin{array}{c}
\text{du Bois}\\
\text{rational}
\end{array}\right.\iff\;\sigma_{f}^{\text{min}}\;\text{is }\left\{ \begin{array}{c}
\geq1\\
>1.
\end{array}\right.
\]
To obtain further examples from this one, we need an observation 
about deformation of isolated singularities more generally.

First let $f$ be any isolated hypersurface singularity, and
$f_{s}(\uz)=F(\uz,s):\,\disc^{n+1}\times\disc\to\CC$
a 1-parameter holomorphic deformation thereof (with $f=f_{0}$, and $f_s(0)=0$ $(\forall s\in \Delta)$).  
We assume that the Milnor number $\mu_{f_{s}}=\mu_{f}$ is constant; 
equivalently (cf. \cite{Gr86}), the polar curve
$\{\partial_{z_{i}}F=0\,(\forall i)\}\cap(\disc^{n+1}\times\disc)$
is simply $\{\uo\}\times\disc\underset{\imath}{\hookrightarrow}\disc^{n+2}$,
along which $\imath^{*}\pphi_{F}\CC_{\disc^{n+2}}[n+1]\simeq\mathcal{V}_{f_{s}}$
then yields a VMHS. Evidently, the action of $T^{\text{ss}}$ --- 
and thus $\tilde{\sigma}_{f_{s}}(=\tilde{\sigma}_{f})$ --- must be constant in $s$.  
That is, \emph{the weighted spectrum is invariant under $\mu$-constant deformations} 
(as stated for spectra in \cite{Va3}). 

By a result of Varchenko \cite{Va5}, 
the $\mu$-constant deformations of quasi-homogeneous (QH) isolated singularities
are the \emph{semi-quasihomogeneous} (SQH) ones.  
More precisely, given a weight vector $\uw\in\QQ_{>0}^{n+1}$
as above and an isolated singularity of the form 
$\tilde{f}=\sum_{\underline{m}\cdot\uw\geq1}a_{\underline{m}}\uz^{\underline{m}}$,
one has equality in $\mu_{\tilde{f}}\geq\prod_{i}(w_{i}^{-1}-1)$
if and only if the weight-1 part $f:=\sum_{\um\cdot\uw=1}a_{\um}\uz^{\um}$
defines an isolated singularity (which is precisely the SQH condition) \cite{FT}. 
Informally, given a QH $f$ (of ``weight 1'') defining an isolated singularity, 
nearby QH deformations (by adding small weight-$1$ terms) are $\mu$-constant,
as are (SQH) deformations by adding arbitrary terms of weight $>1$.

The upshot is that for those isolated QH and SQH singularities which are
$\mu$-constant deformations of FQH singularities, we can easily compute
the weighted spectrum.
\begin{example}
\label{ex2.2a} If $f$ is one of the simple elliptic singularities
$\tilde{E}_{r}$ ($f_{\tilde{E}_{6}}=x^{3}+y^{3}+z^{3}+\lambda xyz$,
$f_{\tilde{E}_{7}}=x^{2}+y^{4}+z^{4}+\lambda xyz$, or $f_{\tilde{E}_{8}}=x^{2}+y^{3}+z^{6}+\lambda xyz$,
with $\lambda$ avoiding the discriminant locus), then it is a QH-deformation
of an FQH-singularity, hence has $\tilde{\sigma}_{f}=[(1,3)]+[(2,3)]+\sum_{j=1}^{r}[(1+\lambda_{j},2)]$
with $\{\lambda_{j}\}=\{\tfrac{1}{3},\tfrac{1}{3},\tfrac{1}{3},\tfrac{2}{3},\tfrac{2}{3},\tfrac{2}{3}\}$,
$\{\tfrac{1}{4},\tfrac{1}{4},\tfrac{1}{2},\tfrac{1}{2},\tfrac{1}{2},\tfrac{3}{4},\tfrac{3}{4}\}$
resp. $\{\tfrac{1}{6},\tfrac{1}{3},\tfrac{1}{3},\tfrac{1}{2},\tfrac{1}{2},\tfrac{2}{3},\tfrac{2}{3},\tfrac{5}{6}\}$.
Since this puts the weight-2 and 3 parts of $V_{f}=H^{2}(\cE_{\lambda}\backslash E_{\lambda})$
in distinct $T^{\text{ss}}$-eigenspaces, the extension class of the MHS
is trivial and its only moduli come from $H^{1}(E_{\lambda})(-1)$.
That is, all $\mathit{AJ}$-classes of divisors in the image of $\mathrm{Pic}^{0}(\cE_{\lambda})\to\mathrm{Pic}^{0}(E_{\lambda})$
are torsion. (This justifies Example I.6.2.)
\end{example}

\begin{example}
\label{ex2.2b} If $f=z^{2}+g_{M}(x,y)$, where $g_{M}$ is a product
of $M\geq2$ linear forms, the form of the MHS on $V_{f}$ bifurcates
into \[\inputencoding{latin1}{\includegraphics[scale=0.7]{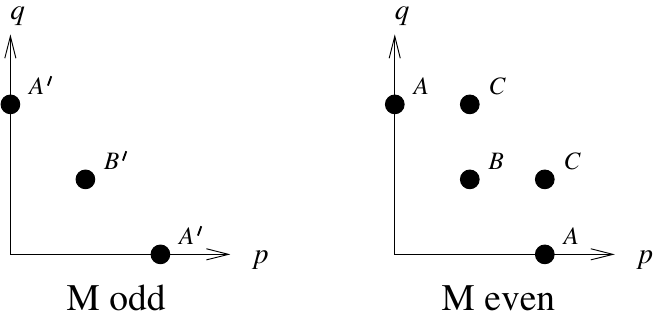}}\inputencoding{latin9}\]with
$A'=\tfrac{(M-1)(M-3)}{8}$, $B'=\tfrac{(3M-1)(M-1)}{4}$, $A=\tfrac{(M-2)(M-4)}{8}$,
$B=\tfrac{3M^{2}-6M+4}{4}$, and $C=\tfrac{M}{2}-1$. Decomposing
into $T^{\text{ss}}$-eigenspaces according to the spectrum shows that the
period map for deformations of such singularities takes values in
a product of Hermitian symmetric domains: ${\rm I}_{1,M-3}\times{\rm I}_{2,M-4}\times\cdots\times{\rm I}_{\frac{M-3}{2},\frac{M-1}{2}}$
($M$ odd) or ${\rm I}_{1,M-3}\times{\rm I}_{2,M-4}\times\cdots\times{\rm I}_{\frac{M-4}{2},\frac{M}{2}}\times{\rm III_{\frac{M}{2}-1}}$
($M$ even). Extension classes are again zero.

Note that $M=5$ is the $N_{16}$ singularity studied in \cite{La1},
whose quasi-homogeneous deformations are captured (up to analytic
isomorphism) by the 2-ball ${\rm I}_{1,2}$. On the other hand, the
period map for $V_{f}$ cannot capture strictly SQH-deformations (which
are not ``seen'' by the tail $\hat{\cE}\backslash\hat{E}$). For
$z^{2}+x^{5}+y^{5}$, such a deformation is given by adding $sx^{3}y^{3}$,
which changes the analytic equivalence class; it can be detected by
the more refined ``period map'' given by the Brieskorn lattice \cite{Sa3}.
\end{example}
The reader will notice that in Examples \ref{ex2.2a} and \ref{ex2.2b},
some or all of the $\gr_{3}^{W}$ part of $V_{f}$ could be absorbed
into $H_{\pha}^{3}$ in \eqref{eq2.1a}, rather than contributing
to $\mathrm{coim}(N)$ in $H_{\lm}^{2}(X_{t})$. In general, addressing
``where the $T^{\text{ss}}$-invariant part of $\gr_{n+1}^{W}H_{\van}^{n}$
goes'' is difficult without further assumptions (cf. Prop. \ref{prop2.4b})
in $\S$\ref{S2.4}).
\begin{example}
\label{ex2.2c} Given $\cx\overset{t}{\to}\disc$, $\cx$ smooth
(of dimension $n+1$), $p_{g}:=h^{0,n}(X_{t})$ ($t\neq0$), and $P\in X_{0}$
an ordinary $k$-tuple point%
\footnote{$X_{0}$ may have other isolated hypersurface singularities%
} ($t$ locally a SQH-deformation of $f_{k}=x_{1}^{k}+\cdots+x_{n+1}^{k}$)
we consider the relationship between $p_{g}$ and $k$. Writing $v^{p,q}:=\dim(V_{f_{k}}^{p,q})$,
we compute $v^{0,n}={k-1 \choose n+1}$ and $v^{1,n}={k-1 \choose n}$.
Since $H_{\lm}^{n}(X_{t})^{0,n}\twoheadrightarrow V_{f_{k}}^{0,n}$
we must have at least $p_{g}\geq{k-1 \choose n+1}$. If we know that
$H_{\pha}^{1,n}=\{0\}$ (e.g., if $n=1$ and $X_{0}$ is irreducible;
or in the scenario of Prop. \ref{prop2.4b}), then $H_{\lm}^{n}(X_{t})^{0,n-1}\overset{\cong}{\underset{N}{\leftarrow}}H_{\lm}^{n}(X_{t})^{1,n}\twoheadrightarrow V_{f_{k}}^{1,n}$
as well, so that $p_{g}\geq v^{0,n}+v^{1,n}={k \choose n+1}$. (If
the $\{X_{t}\}$ are hypersurfaces in $\PP^{n+1}$ of degree $d$,
this merely says that $d\geq k+1$.) Note that the case $k=3=n$ is
the $O_{16}$ singularity studied in \cite{LPZ}, with only $v^{2,1}=v^{1,2}=5$
and $v^{2,2}=6$ nonzero.
\end{example}

\begin{example}
\label{ex2.2d} One can ask more generally which isolated singularity
types (on $X_{0}$) are ruled out for $\{X_{t}\}$ of a fixed nature.
For instance, if the $\{X_{t}\}$ are cubic threefolds, and $P\in X_{0}$
has type $A_{k}$ ($f$ locally analytically equivalent to $x^{2}+y^{2}+z^{2}+w^{k+1}$),
then $\lfloor\tfrac{k}{2}\rfloor=v^{1,2}\leq h_{\lm}^{1,2}\leq5$
$\implies$ $k\leq11$. (Moreover, there would have to be nontrivial
$H_{\pha}^{2,2}$ to allow $k=11$, since $v^{2,2}=1$ for $k$ odd.)
In fact this is sharp, since $y^{3}+z^{3}+x^{2}w-yw-2xyz+w^{3}$ is
type $A_{11}$ \cite{Al} in analytically equivalent disguise.
\end{example}

\subsection{Nodes on odd-dimensional hypersurfaces}\label{s-schoen}

When the spectrum of an isolated hypersurface singularity has some integer support, this may produce an ambiguity in the effect on the limiting MHS.  More precisely, in the setting of \eqref{I1} (or \eqref{eq2.1b}) with $\cx$ smooth, the $\gr^W_{n+1}$ part of $\ker(T-I)\subseteq H^n_{\van}(X_t)$ may contribute either to $H^{n+1}_{\pha}(X_0)$ or to $H^n_{\lm}(X_t)$, or split its contributions between the two.  We shall touch further on this phenomenon in $\S$\ref{S2.4} (Prop. \ref{prop2.4b}) below, but focus here on how one may disambiguate these contributions in the first interesting case, where $\text{sing}(X_0)$ consists of finitely many nodes and $n$ is odd.  

A well-known example is the Fermat quintic 3-fold family in $\PP^4$, defined by $X_t:=\{(t+5)\sum_{i=0}^4 \mathsf{X}_i^5 = \prod_{i=0}^4 \mathsf{X}_i\}$, which acquires $125$ nodes at $t=0$ (each with spectrum $\tilde{\sigma}=[(2,4)]$).  Denoting the $125$ vanishing cycles by $\{\varphi_i\}$, the monodromy logarithm $N=\sum_{i=1}^{125}\langle \,\cdot\, ,\varphi_i\rangle\varphi_i$ only has rank $101$.  That is, some of $H_{\van}^3 = \QQ(-2)^{\oplus 125}$ contributes to $H^4_{\pha}(X_0)\cong \QQ(-2)^{\oplus 24}$, and some to $(H^3_{\lm})^{2,2}\cong \QQ(-2)^{\oplus 101}$ (which in fact is the maximum possible, since $h^{2,1}(X_t)=101$).  We now show how a simple method for quantifying this ``splitting of $H^n_{\van}$ in general can be deduced from a result of Schoen \cite{Schoen}.

For this subsection (only), $\mathbf{P}$ is any smooth, irreducible, projective variety of dimension $n+1=2m$ satisfying Bott vanishing:  that is, for every ample line bundle $\mathcal{L}$, we have $H^i(\mathbf{P},\Omega^j_{\mathbf{P}}(\mathcal{L}))=\{0\}$ for all $i>0$ and $j\geq 0$.  Let $\cx\to\Delta$ be a family of ample hypersurfaces in $\mathbf{P}$, with smooth total space, and $\text{sing}(X_0)=:\mathsf{S}=\{\mathsf{S}_i\}_{i=1}^{\rd}$ consisting of $\rd$ nodes.  Write $\tilde{X}_0$ [resp. $\tilde{\mathbf{P}}$] for the blowup of $X_0$ [resp. $\mathbf{P}$] along $\mathsf{S}$, with exceptional divisors $\mathsf{Q}=\{\mathsf{Q}_i\}_{i=1}^{\rd}$ [resp. $\mathsf{P}=\{\mathsf{P}_i\}_{i=1}^{\rd}$].  Let $r$ denote the rank of the map $$\mathrm{ev}_{\mathsf{S}}\colon H^0(\mathbf{P},K_{\mathbf{P}}(m X_0))\to \CC^{\rd}$$ evaluating sections at $\mathsf{S}$.\footnote{Warning: this map incorporates a noncanonical identification of each fiber of the line bundle $K_{\mathbf{P}}(mX_0)$ with $\CC$ (e.g., by dividing by a fixed section vanishing at none of the $\mathsf{S}_i$, if one exists).}

\begin{thm}\label{t:schoen}
The monodromy logarithm $N = T-I \in \mathrm{End}(H_{\lm}^n(X_t))$ has rank $r$.  Consequently $W_{n-1}H^n(X_0)=H^n(X_0)^{m-1,m-1}$ has dimension $r$ and $H_{\pha}^{n+1}(X_0)=H_{\pha}^{n+1}(X_0)^{m,m}$ has dimension $\rd-r$.
\end{thm}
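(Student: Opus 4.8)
The plan is to reduce Theorem~\ref{t:schoen} to Schoen's computation of the rank of the period map at a node-acquiring degeneration, and then to extract the two cohomological statements from the shape of the monodromy weight filtration.

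\emph{Step 1: a soft reduction.} The $\rd$ vanishing cycles $\varphi_1,\dots,\varphi_{\rd}\in H^n_{\mathrm{prim}}(X_t)$ lie in pairwise disjoint Milnor balls, so the local monodromies $T_i$ commute and (using $n$ odd, whence $\langle\varphi_i,\varphi_i\rangle=0$, and $\langle\varphi_i,\varphi_j\rangle=0$ for $i\neq j$) satisfy $(T_i-I)(T_j-I)=0$ for all $i,j$; hence $N=T-I=\sum_i\langle\,\cdot\,,\varphi_i\rangle\varphi_i$ with $N^2=0$. Factoring $N=\var\circ\can$ through $H^n_{\van}(X_t)=\QQ(-m)^{\oplus\rd}$, one has (up to nonzero scalars on each summand) $\can\colon H^n_{\lm}(X_t)\to\QQ^{\rd}$, $x\mapsto(\langle x,\varphi_i\rangle)_i$, and $\var\colon\QQ^{\rd}\to H^n_{\lm}(X_t)(-1)$, $e_i\mapsto\varphi_i$. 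Writing $\rho:=\dim_{\QQ}\mathrm{span}\{\varphi_i\}$, nondegeneracy of the intersection form on $H^n_{\mathrm{prim}}(X_t)$ gives $\mathrm{rank}(\can)=\rho$, while the identity $\langle\can(x),(c_i)\rangle_{\mathrm{std}}=\langle x,\sum_i c_i\varphi_i\rangle$ shows $\mathrm{im}(\can)$ is the orthogonal complement of $\ker(\var)$ for the positive-definite standard form on $\QQ^{\rd}$, so $\mathrm{im}(\can)\cap\ker(\var)=\{0\}$ and $\mathrm{rank}(N)=\mathrm{rank}\bigl(\var|_{\mathrm{im}(\can)}\bigr)=\rho$. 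By \eqref{eq2.1b}, $H^{n+1}_{\pha}(X_0)=\mathrm{im}(\delta)$ is a quotient of $H^n_{\van}(X_t)$, hence $\cong\QQ(-m)^{\oplus(\rd-\rho)}$, of type $(m,m)$ and dimension $\rd-\rho$; and the first map in \eqref{eq2.1b} is injective (so $H^n_{\pha}(X_0)=0$), whence by \eqref{I5} $H^n(X_0)\cong H^n_{\lm}(X_t)^T=\ker(N)=W_nH^n_{\lm}(X_t)$. Since $N^2=0$ and the monodromy weight filtration is centered at $n$, $\can$ embeds $\gr^W_{n+1}H^n_{\lm}(X_t)$ into $H^n_{\van}(X_t)$ as $\QQ(-m)^{\oplus\rho}$, so $\gr^W_{n-1}H^n_{\lm}(X_t)\cong\gr^W_{n+1}H^n_{\lm}(X_t)(1)\cong\QQ(-(m-1))^{\oplus\rho}$; therefore $W_{n-1}H^n(X_0)=\gr^W_{n-1}H^n_{\lm}(X_t)=H^n(X_0)^{m-1,m-1}$ has dimension $\rho$. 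Thus the whole theorem follows once $\rho=r$.

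\emph{Step 2: $\rho=r$ via Schoen.} Bott vanishing for $\mathbf{P}$ is exactly the input needed to run the Carlson--Griffiths residue calculus on the ample hypersurface $X_t\subset\mathbf{P}$: it forces $H^i(\mathbf{P},\Omega^j_{\mathbf{P}}(\ell X_0))=\{0\}$ for $i>0$, $\ell\geq1$, after which residues $\mathrm{Res}(P\,\Omega_{\mathbf{P}}/s^{m})$, for $P\in H^0(\mathbf{P},K_{\mathbf{P}}(mX_0))$ and $s$ the defining section, span $\gr_F^mH^n_{\mathrm{prim}}(X_t)$ modulo the derivative ideal (recall $n=2m-1$). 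Griffiths' local residue formula then identifies the pairing of such a residue class with the vanishing cycle $\varphi_i$ — which is concentrated near the node $\mathsf{S}_i$ — with a nonzero multiple of the value $P(\mathsf{S}_i)$ (Hessian factors and the noncanonical trivialization of $K_{\mathbf{P}}(mX_0)$ contribute only scalars, hence do not affect ranks). So any relation $\sum_i c_i\varphi_i=0$ forces $(c_i)\in\ker(\mathrm{ev}_{\mathsf{S}})$; the converse, namely that the relations among the $\varphi_i$ are already detected on $\gr_F^m$, is the content of Schoen's theorem \cite{Schoen}, whose proof for $\mathbf{P}=\PP^{n+1}$ goes through once the above residue description is in place. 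Hence the space of relations among the $\varphi_i$ coincides with $\ker(\mathrm{ev}_{\mathsf{S}})$, and $\rho=\rd-\dim\ker(\mathrm{ev}_{\mathsf{S}})=r$.

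\emph{The main obstacle} is Step 2: one must check that Bott vanishing (rather than the explicit projective-space computations available for $\PP^{n+1}$) suffices to obtain the Carlson--Griffiths description of $\gr_F^\bullet H^n_{\mathrm{prim}}(X_t)$ for hypersurfaces in $\mathbf{P}$, and that Schoen's node-by-node local analysis — pairing a vanishing cycle against an adjoint residue class — transfers verbatim. Step~1, and the weight-filtration bookkeeping inside it, are routine given Lemma~\ref{lem2.1}, the exact sequences \eqref{eq2.1b} and \eqref{I5}, and the standard structure of the monodromy weight filtration for a two-step ($N^2=0$) degeneration.
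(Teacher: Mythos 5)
Your Step 1 is correct and self-contained: the Picard--Lefschetz description $N=\sum_i\langle\,\cdot\,,\varphi_i\rangle\varphi_i$, the observation that $\ker(\var)=\mathrm{im}(\can)^{\perp}$ for the standard form (hence $\mathrm{rk}(N)=\dim\mathrm{span}\{\varphi_i\}=:\rho$), and the weight-filtration bookkeeping all check out, and they correctly reduce the theorem to the single equality $\rho=r$. (One slip of notation: a relation $\sum_ic_i\varphi_i=0$ produces a vector annihilating $\mathrm{im}(\mathrm{ev}_{\mathsf{S}})$ after rescaling by the local constants, not an element of $\ker(\mathrm{ev}_{\mathsf{S}})$, which lives in $H^0(\mathbf{P},K_{\mathbf{P}}(mX_0))$.) This half runs the paper's argument in reverse: the paper first computes $\dim W_{n-1}H^n(X_0)=r$ on the special fiber and then reads off $\mathrm{rk}(\can)$ from the vanishing-cycle sequence, whereas you compute $\mathrm{rk}(N)$ directly and propagate outward. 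Either direction is fine.

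The gap is in Step 2, exactly where you flag it, and it is not merely a matter of ``transferring Schoen verbatim.'' Your residue-pairing mechanism, as written, yields only one inequality: granting $\langle\mathrm{Res}(P\,\Omega_{\mathbf{P}}/s^m),\varphi_i\rangle=c_iP(\mathsf{S}_i)$ with $c_i\neq0$, the $r$-dimensional image of $\mathrm{ev}_{\mathsf{S}}$ injects (after rescaling) into $\mathrm{im}(\can)$, so $\rho\geq r$. The substantive half is $\rho\leq r$, i.e.\ that \emph{no other} classes in $H^n(X_t)$ detect the span of the $\varphi_i$, and you defer this to ``Schoen's theorem.'' But the result the paper actually uses, [Schoen, Prop.\ 1.3], is a statement about the Gysin map $\oplus_iH_{\mathrm{pr}}^{n-1}(\mathsf{Q}_i)\to H^{n+1}(\tilde{X}_0)$ on the blown-up \emph{special} fiber, not about vanishing cycles on the nearby fiber; the bridge between the two is precisely what the paper's proof supplies (Mayer--Vietoris plus Clemens--Schmid), and your proposal never builds it. The missing chain can be assembled on your route: since $\can$ is a morphism of MHS onto a subobject of $\QQ(-m)^{\oplus\rd}$, which is pure of type $(m,m)$, strictness gives $\mathrm{im}(\can)_{\CC}=\can(F^mH^n_{\lm})=\can(\gr_F^mH^n_{\lm})$; Carlson--Griffiths (which is what Bott vanishing buys you) says $\gr_F^mH^n_{\mathrm{prim}}(X_t)$ is spanned by the pole-order-$m$ residues; and the limiting period computation at each node (using that the node's spectrum is $[(m,2m)]$, so $\lim_{t\to0}\int_{\varphi_i}\mathrm{Res}(P\,\Omega_{\mathbf{P}}/s_t^m)$ is a nonzero multiple of $P(\mathsf{S}_i)$) then identifies $\mathrm{im}(\can)_{\CC}$ with the rescaled image of $\mathrm{ev}_{\mathsf{S}}$. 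Until that argument -- or the paper's special-fiber route -- is written out, the central equality $\rho=r$ is asserted rather than proved.
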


\begin{proof}
Mayer--Vietoris and weak Lefschetz yield the exact sequence of MHS $$0\to H^{n-1}(\tilde{\mathbf{P}})\overset{\alpha}{\to} H^{n-1}(\tilde{X}_0)\overset{\beta}{\to} \oplus_{i=1}^{\rd}\tfrac{H^{n-1}(\mathsf{Q}_i)}{H^{n-1}(\mathsf{P}_i)}\overset{\gamma}{\to} W_{n-1}H^n(X_0)\to 0.$$ By Schoen's result \cite[Prop. 1.3]{Schoen}, $$\mathrm{im}\{\beta^{\vee}=\gy\colon \oplus_{i=1}^{\rd} H_{\mathrm{pr}}^{n-1}(\mathsf{Q}_i)\to H^{n+1}(\tilde{X}_0)\} \cong H^1 (\mathbf{P},\mathcal{I}_{\mathsf{S}}\otimes K_{\mathbf{P}}(mX_0))\, ,$$ the right-hand side of which is isomorphic to $\mathrm{coker}(\mathrm{ev}_{\mathsf{S}})$ by Bott vanishing.  So $\mathrm{rk}(\beta)=\dim(\mathrm{coker}(\mathrm{ev}_{\mathsf{S}}))=\rd -r$, hence $\dim W_{n-1}H^n(X_0)=r$.  Now each node has weighted spectrum $[(m,2m)]$.  Applying the resulting vanishing-cycle sequence $$0\to H^n(X_0) \overset{\sp}{\to} H^n_{\lm}(X_t) \overset{\can}{\to} \QQ(-m)^{\oplus \rd}\overset{\delta}{\to} H^{n+1}_{\pha}(X_0)\to 0$$ gives $\mathrm{rk}(\can)=\dim\{(H^n_{\lm})^{m,m}\}=\dim\{(H^n_{\lm})^{m-1,m-1}\}=\dim\{H^n(X_0)^{m-1,m-1}\}=r$ hence $\dim H^{n+1}_{\pha}(X_0)=\rd-r$.
\end{proof}

\begin{rem}
The following (somewhat) heuristic interpretation may be of use.  The image of $\mathrm{ev}_{\mathsf{S}}$ represents type-$(m,m-1)$ forms on $\tilde{X}_0$ with residues giving primitive $(m-1,m-1)$-classes on the $\{\mathsf{Q}_i\}$, i.e. \emph{killing} the image of $\gy$.  So $\mathrm{rk}(\beta^{\vee})=\rd -r$.  These same relations may also be viewed topologically as chains on $X_0$:  first, one has retraction maps $\bar{\mathfrak{F}}_{\mathsf{S}_i} \overset{\eta_i}{\twoheadrightarrow} \bar{\mathcal{N}}_{\mathsf{S}_i} \subset X_0$ from the Milnor fibers to a neighborhood of each node, which restrict to isomorphisms along the boundaries.  Representing $H^n_{\van}(X_t)=\oplus_{i=1}^{\rd} H^n(\mathfrak{F}_{\mathsf{S}_i})\cong \oplus_{i=1}^{\rd} H_n(\bar{\mathfrak{F}}_{\mathsf{S}_i},\partial\bar{\mathfrak{F}}_{\mathsf{S}_i})$ by $\rd$ $n$-cycles $\{\Gamma_i\}$, the inclusions $\eta_i(\partial\Gamma_i)=:\gamma_i$ of their boundaries into $X_0$ (as elements of $H^{n+1}(X_0)$!) compute the image of $\delta$. One then has $r$ ``relations-$n$-chains'' $\{\mathsf{R}_j\}_{j=1}^r$ whose boundaries $\partial \mathsf{R}_j = \sum_{i=1}^{\rd}c_i^{(j)}\gamma_i$ with $\{\underline{c}^{(j)}\}_{j=1}^r \subset \QQ^{\rd}$ linearly independent vectors; and the $r$ $n$-cycles $\mathsf{R}_j - \sum_{i=1}^{\rd} c_i^{(j)} \eta_i(\Gamma_i)$ are independent in $H_n(X_0)$, by pairing with $r$ of the vanishing spheres (which thereby span $H^n(X_0)$).
\end{rem}

\begin{cor}\label{c:schoen}
If $\cx$ is a family of degree $d\geq 3$ hypersurfaces in $\PP^{2m}$, and $X_0$ is nodal with at least one node, then $\mathrm{rk}(N)\geq 1$.  In particular, if $X_0$ has exactly one node, then $H^{n+1}_{\pha}(X_0)=\{0\}$.
\end{cor}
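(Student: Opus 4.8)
The plan is to apply Theorem \ref{t:schoen} directly, taking $\mathbf{P} = \PP^{2m}$, so that all the work reduces to (i) checking that the hypotheses of that theorem hold here, and (ii) showing the rank $r$ of the relevant evaluation map is at least $1$.

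For (i): $\PP^N$ satisfies Bott vanishing. Indeed, by Bott's formula the only higher cohomology $H^i(\PP^N,\Omega^j_{\PP^N}(\ell))$ with $i>0$ that can be nonzero occurs either for $0<i=j<N$ with $\ell=0$, or for $i=N$ with $\ell<j-N\leq 0$; both possibilities are excluded as soon as $\ell\geq 1$, i.e. as soon as $\co_{\PP^N}(\ell)$ is ample. Since degree-$d$ hypersurfaces in $\PP^{2m}$ lie in the ample (indeed very ample) linear system $|\co(d)|$, and --- under the standing assumptions of this subsection --- $\cx$ is smooth with $\mathrm{sing}(X_0)=\mathsf{S}$ a set of $\rd$ nodes, Theorem \ref{t:schoen} applies and yields $\mathrm{rk}(N)=r$ and $\dim H^{n+1}_{\pha}(X_0)=\rd-r$, where $r$ is the rank of $\mathrm{ev}_{\mathsf{S}}\colon H^0(\PP^{2m},K_{\PP^{2m}}(mX_0))\to\CC^{\rd}$.

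For (ii): I would compute the line bundle. As $K_{\PP^{2m}}=\co(-2m-1)$ and $X_0\in|\co(d)|$, we get $K_{\PP^{2m}}(mX_0)=\co(md-2m-1)$, and $md-2m-1\geq 3m-2m-1=m-1\geq 0$ precisely because $d\geq 3$ and $m\geq 1$ (note $n=2m-1$ is odd, so $m\geq 1$). Hence $\co(md-2m-1)$ is globally generated; choosing homogeneous coordinates in which a prescribed node lies at $[1:0:\cdots:0]$, the section $\mathsf{X}_0^{md-2m-1}$ does not vanish there, so composing $\mathrm{ev}_{\mathsf{S}}$ with projection onto that node's coordinate is surjective. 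Thus $r\geq 1$, i.e. $\mathrm{rk}(N)\geq 1$. When $X_0$ has exactly one node, $\rd=1$, so $1\geq r\geq 1$ forces $r=1$ and $\dim H^{n+1}_{\pha}(X_0)=\rd-r=0$.

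There is no genuine obstacle here --- the argument is essentially a one-line application of Theorem \ref{t:schoen} once the hypotheses are verified. The only points demanding (routine) care are Bott vanishing for even-dimensional projective space and the numerical inequality $md-2m-1\geq 0$, and it is exactly the latter that uses $d\geq 3$: for $d=2$ one would have $K_{\PP^{2m}}(mX_0)=\co(-1)$ with no global sections, $r=0$, and the conclusion would \emph{genuinely} fail.
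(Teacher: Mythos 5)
Your proposal is correct and follows exactly the paper's route: the paper's proof is the one-liner that $K_{\PP^{2m}}(mX_0)\cong\co_{\PP^{2m}}(md-2m-1)$ is base-point free (using $d\geq 3$), so $\mathrm{rk}(\mathrm{ev}_{\mathsf{S}})\geq 1$, and the rest follows from Theorem \ref{t:schoen}. Your additional explicit verification of Bott vanishing for $\PP^{2m}$ via Bott's formula is a correct filling-in of a hypothesis the paper takes for granted.
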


\begin{proof}
Since $K_{\PP^{2m}}(mX_0)\cong \mathcal{O}_{\PP^{2m}}(md-2m-1)$ is base-point free, $\mathrm{rk}(\mathrm{ev}_{\mathsf{S}})\geq 1$.
\end{proof}

\section{Singularities with Calabi--Yau tail\label{S2.3}}
The general theme of our study is the interplay between Hodge theory, singularities, and moduli problems. Many of our results here and in Part I are concerned with the du Bois singularities, which should be understood as 
being relatively mild for Hodge theory of degenerations \cite{steenbrink}. These are also closely related to the slc singularities \cite{KK}, the singularities relevant for the KSBA compactifications. Frequently, however, one encounters non-du Bois singularities, for instance when studying GIT quotients.  So it is of interest to understand the Hodge-theoretic behavior of such singularities, and the Hodge theory of the tail when one performs a semi-stable or KSBA replacement of the singular fiber containing them. The simplest such situation is that of a family of curves degenerating to a cuspidal curve: the semi-stable replacement of the cuspidal fiber will introduce an elliptic tail (see \cite{hassett} for more examples and discussion of the $1$-dimensional case). Here we discuss higher dimensional analogues of this example -- namely, singularities that lead to $K3$ tails, or more generally Calabi--Yau tails. This is partially motivated by the role played by triangle (i.e. Dolgachev) singularities in the study of degenerations of $K3$ surfaces (see \cite[Sect. 6]{log2}). 

\begin{rem}
A concrete application of these kinds of ideas to the study of KSBA compactifications for moduli of surfaces of general type is \cite{GPSZ} (which appeared subsequent to our manuscript), where the authors produce boundary divisors for the moduli of Horikawa surfaces by the KSBA replacements associated to singularities of the kind studied in this section. We emphasize however that the study here is local (in the spirit of the stable local reduction for curves \cite{hassett}), as opposed to the global situation of \cite{GPSZ}.
\end{rem}

Referring to the setting of \eqref{eq2.2a}-\eqref{eq2.2c} (with $\mathbf{P}$ denoting $\mathbb{WP}^{n+1}[1:\utw]$), we begin with
\begin{defn}
\label{def2.2} A quasi-homogeneous singularity is said to have a
(\emph{pure} resp. \emph{mixed}) \emph{CY tail} if $\hat{\cE}\subset\mathbf{P}$
is a quasi-smooth anticanonical hypersurface (and $H^{n}(\hat{\cE}\backslash\hat{E})$
is pure resp. mixed).
\end{defn}
An immediate consequence of the definition is that the spectrum must have
\begin{equation}\label{e3.00}
\left||\sigma_{f}|\cap(0,1)\right|=1
\end{equation}
(and $|\sigma_{f}|\cap\ZZ=\emptyset$ in the pure case); this weaker
condition defines the notion of a (pure resp. mixed) \emph{numerical
CY tail}.
\begin{rem}
\label{rem2.2.2} In Definition \ref{def2.2}, we retain the hypotheses
on $f$ in \eqref{eq2.2a}: in particular, that also $\hat{E}=\hat{\cE}\cap\mathbb{WP}[\tilde{\uw}]$
be quasi-smooth. In principle, this may exclude some cases where $W_{n+1}V_{f}\subsetneq V_{f}$,
but has no effect in the classification below for $n=1$ or $2$.
Beyond consistency within this section, there is a good reason for
this restriction: by ensuring that the weights of $V_{f}$ are only
$n$ and $n+1$, it focuses attention on the singularities which \emph{may}
have only pure weight $n$, ``Calabi--Yau HS'' contribution to $H_{\lm}^{n}$
(if the weight $n+1$ part is absorbed into $H_{\pha}^{n+1}$ in \eqref{eq2.1a}).
\end{rem}
Let $\Gamma_{\uw}$ {[}resp. $\NP_{\uw}${]} denote the convex
hull of $\mathfrak{M}(\uw)$ {[}resp. $\mathfrak{M}(\uw)$ and $\uo${]}.
The numerical condition on the weight vector that is actually equivalent
to Defn. \ref{def2.2} is twofold:\begin{enumerate}[label=\bf(\alph*),leftmargin=0.8cm]
\item triviality of $K_{\hat{\cE}}$: by adjunction, this means that%
\footnote{$\mathbb{WP}^{n+1}[1:\tilde{w}_{1}:\cdots:\tilde{w}_{n+1}]$ is automatically
well-formed if \eqref{eq2.2d} holds; i.e. any $(n+1)$-element subset
of $\{1,\tilde{w}_{1},\ldots,\tilde{w}_{n+1}\}$ has $\gcd$ 1.%
} \begin{equation} \label{eq2.2d}
\frac{1}{d} + \sum_{i=1}^{n+1} w_i = 1,\;\;\text{where}\; d=\mathrm{lcm}\{v_i\} \text{; and}
\end{equation}
\item existence of a quasi-smooth $\hat{\cE}\subset\mathbf{P}$
of degree $d$, which translates to%
\footnote{quasi-smoothness of $\hat{E}$ amounts to the additional hypothesis
that $\Gamma_{\uw}\cap\ZZ_{>0}^{n+1}\subset\mathrm{int}(\Gamma_{\uw})$.%
}\begin{equation}\label{eq2.2e}
(1,\ldots,1)\in \mathrm{int}(\NP_{\uw}).
\end{equation}
\end{enumerate}
\begin{example}
\label{ex2.2e} Let $n=1$. By the classification of simple elliptic
singularities \cite{KSa}, the $t\mapsto t^{d}$ base-change of a quasihomogeneous
curve singularity with elliptic tail must be one of $\tilde{E}_{6}$,
$\tilde{E}_{7}$, or $\tilde{E}_{8}$. Consequently the possibilities
are $f\sim x^{2}+y^{3}$ (cusp/$A_{2}$) for the pure case, and $f\sim x^{2}+y^{4}$
(tacnode/$A_{3}$) or $x^{3}+y^{3}$ (ordinary triple point/$D_{4}$)
for the mixed case. The contribution of the latter two to $H_{\lm}^{1}$
may nevertheless be pure if $X_{0}$ is reducible (cf. Remark \ref{rem2.2.2}).
\end{example}

\begin{example}
\label{ex2.2f} In the Fermat case $f=\sum_{i=1}^{n+1}z_{i}^{d_{i}}$,
we have $w_{i}=\tfrac{1}{d_{i}}$ and \eqref{eq2.2d} becomes%
\footnote{\eqref{eq2.2e} is automatic in the FQH case%
} \begin{equation} \label{eq2.2f}
\frac{1}{\mathrm{lcm}\{d_i\} } + \sum_{i=1}^{n+1} \frac{1}{d_i} = 1.
\end{equation}The resulting CY tail is pure iff \begin{equation} \label{eq2.2g}
\left\{ \sum_{i=1}^{n+1}\frac{\beta_i}{d_i} \mid \beta_i \in (0,d_i)\cap \ZZ \right\} \cap \ZZ = \emptyset.
\end{equation}For example, this yields that the threefold ($n=3$) FQH singularities
with pure CY tail are \begin{multline*}
x^4 + y^4 + z^4 + w^6,\;\; x^3 + y^4 + z^4 + w^8,\;\; x^3 + y^4 + z^5 + w^5,\\ x^3 + y^4 + z^6 + w^6,\;\; x^3 + y^5 + z^5 + w^5,
\end{multline*}and suspensions of surface FQH singularities ($x^{2}+\cdots$), which
are quite a bit more numerous.
\end{example}
For $n=2$ we now give a complete list:
\begin{thm}
\label{TCY} The isolated quasi-homogeneous hypersurface singularities
with $K3$ tail are as enumerated in Tables \ref{t2.2a} and \ref{t2.2b}.%
\footnote{The tables only list the simplest form of each singularity; the general
QH- and SQH-deformations are left to the reader.%
} In the pure tail case, these are exactly the $14$ Dolgachev singularities
\cite{Do}, the $6$ quadrilateral singularities \cite{Eb}, and $2$
trimodal singularities.\end{thm}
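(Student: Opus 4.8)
The claim is really threefold: the ``if'' direction (every entry of Tables~\ref{t2.2a}--\ref{t2.2b} genuinely has a $K3$ tail), the ``only if'' direction (there are no others), and the identification, in the pure case, of Table~\ref{t2.2a} with Arnold's lists. My plan is to reduce all of this to a finite Diophantine enumeration. The first step is to convert ``$f$ has a (pure or mixed) $K3$ tail'' into a condition on the weight vector $\uw=(w_1,w_2,w_3)$, writing $w_i=u_i/v_i$ in lowest terms, $d=\mathrm{lcm}\{v_i\}$, $\tilde w_i=dw_i$. By Definition~\ref{def2.2}, the discussion of conditions \textbf{(a)}, \textbf{(b)} around \eqref{eq2.2d}--\eqref{eq2.2e}, and the standing quasi-smoothness hypotheses on $f$ (in particular on $\hat E$, cf. Remark~\ref{rem2.2.2}), this is equivalent to: $\tilde w_1+\tilde w_2+\tilde w_3=d-1$ (i.e. $K_{\hat\cE}\cong\co_{\hat\cE}$); $(1,1,1)\in\mathrm{int}(\NP_\uw)$ and $\Gamma_\uw\cap\ZZ_{>0}^3\subset\mathrm{int}(\Gamma_\uw)$, so that generic $f$ as in \eqref{eq2.2a} cuts out quasi-smooth $\hat\cE\subset\mathbb{WP}^3[1:\tilde w_1:\tilde w_2:\tilde w_3]$ and quasi-smooth $\hat E\subset\mathbb{WP}^2[\tilde w_1:\tilde w_2:\tilde w_3]$ with $f$ isolated at $\uo$ (well-formedness of $\mathbf P$ then being automatic). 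Note Corollary~\ref{cor2.1a} gives $\sigma_f^{\mathrm{min}}=\sum_i w_i=1-\tfrac1d$, so these are exactly the non-log-canonical quasi-homogeneous surface singularities whose spectral minimum has this shape. For the ``if'' direction one now simply verifies these conditions on each table entry.

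For the ``only if'' direction one enumerates all $\uw$. The point is that, although $\tilde w_1+\tilde w_2+\tilde w_3=d-1$ does not bound $d$ on its own, quasi-smoothness is severe: for each of $z_1,z_2,z_3$ (and, trivially, the blow-up variable of weight $1$) it forces a ``good monomial'' of weighted degree $d$---either a pure power $z_i^{d/\tilde w_i}$ (so $\tilde w_i\mid d$) or some $z_i^a z_j$ with $a\tilde w_i+\tilde w_j=d$. Imposing these together with $\sum_i\tilde w_i=d-1$ and branching on which variable is of which type bounds $d$ and the $\tilde w_i$, leaving a finite list of candidate $4$-tuples $[1:\tilde w_1:\tilde w_2:\tilde w_3]$. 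Equivalently (and more safely), one starts from the classical list of the $95$ weighted projective $3$-spaces carrying a quasi-smooth anticanonical $K3$ hypersurface (Reid, Yonemura, Iano-Fletcher), keeps those with a coordinate equal to $1$, and discards those for which $\hat E$ fails to be quasi-smooth or $f$ fails to be isolated. Each survivor yields one row of Table~\ref{t2.2a} or~\ref{t2.2b}, with simplest representative $f$.

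To split the list, recall the consequence of Definition~\ref{def2.2} noted in the text: the tail is pure iff $|\sigma_f|\cap\ZZ=\emptyset$. By Theorem~\ref{T2.2} and Corollary~\ref{cor2.1a} this says that no $\ub$ in a monomial basis $\fb$ of the Jacobian ring has $\ell(\ub)=\tfrac1d\sum_i(\beta_i+1)\tilde w_i\in\ZZ$, which is decided case by case (or by passing to the associated FQH model and applying \eqref{eq2.2g}, as in $\S$\ref{S-wf}); this partitions the list into Table~\ref{t2.2a} (pure) and Table~\ref{t2.2b} (mixed). Finally, the $22$ weight systems in the pure case are recognized as those of the $14$ exceptional unimodal (Dolgachev/triangle) singularities $E_{12},\dots,U_{12}$, the $6$ quadrilateral singularities of Ebeling, and $V_{15}$, $N_{16}$---recovering Dolgachev's realization of the $14$ exceptional singularities via anticanonical $K3$'s in weighted $\PP^3$, now extended to the bi- and trimodal cases.

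The principal obstacle is the completeness of the finite enumeration: because $\sum_i\tilde w_i=d-1$ does not bound $d$ by itself, one must genuinely exploit quasi-smoothness of both $\hat\cE$ \emph{and} $\hat E$ (and well-formedness), and it is cleanest to import the known classification of weighted-$\PP^3$ $K3$ families rather than redo the case analysis from scratch. Two further points need care: checking that the standing hypothesis ``$\hat E$ quasi-smooth'' (Remark~\ref{rem2.2.2}) neither adds nor removes cases for $n=2$; and carrying out the spectral computation reliably in every case, both to place each singularity in the correct table and to match each weight system unambiguously with its name in Arnold's classification (distinct weight systems can a priori give analytically equivalent singularities, so this matching must be done with some care).
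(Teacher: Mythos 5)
Your proposal is correct and follows essentially the same route as the paper: reduce to Yonemura's list of $95$ weighted-projective $K3$ hypersurfaces, keep the weight $4$-tuples of the form $(w_1,w_2,w_3,\tfrac1d)$ satisfying \eqref{eq2.2d}--\eqref{eq2.2e}, and then separate pure from mixed. The only cosmetic difference is the pure/mixed criterion: you test for integer spectral support via Theorem \ref{T2.2}, whereas the paper equivalently checks whether $g=|\mathrm{int}(\Gamma_{\uw})\cap\ZZ^3|>0$, i.e.\ whether the curve $\hat E$ has positive genus.
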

\begin{proof}
The $t\mapsto t^{d}$ base-change of $f$ must be in one of the $95$
classes of $K3$ hypersurfaces in weighted projective spaces identified
by Reid (unpublished) and Yonemura \cite{Yo}. Discard from Yonemura's
list of weight 4-tuples all but those of the form $w_{1,}w_{2},w_{3},\tfrac{1}{d}$
as in \eqref{eq2.2d} which also satisfy \eqref{eq2.2e}, and distinguish
pure from mixed by whether $g:=|\mathrm{int}(\Gamma_{\uw})\cap\ZZ^{3}|>0$
(which is the genus of $\hat{E}$).\end{proof}
\begin{rem}
In the tables, ``Arnol'd'' and ``Yonemura'' refer to the Arnol'd
classification (on which the subscript is the Milnor number $\mu_{f}=\dim(V_{f})$)
and the position in Yonemura's list, respectively. In Table \ref{t2.2a},
which is partitioned into the three types mentioned in the Theorem,
$m_{f}$ denotes the modality (cf. Remark \ref{rem222.1}). In Table
\ref{t2.2b}, $g$ denotes the genus of $\hat{E}$; the $H^{1}(\hat{E})(-1)$
part of $V_{f}$ may or may not be absorbed into $H_{\pha}^{3}$.
\end{rem}
\begin{table}[ht]
\caption{Pure $K3$ tail}
\centering
\begin{tabular}{ccccc}
\hline\hline
Yonemura & Arnol'd & $f$ & $d$ &  $m_f$ \\ [0.5ex]
\hline
$4$ & $U_{12}$ & $x^3 + y^3 + z^4$ & $12$ & $1$ \\
$9$ & $W_{12}$ & $x^2 + y^4 + z^5$ & $20$ & $1$ \\
$13$ & $E_{14}$ & $x^2 + y^3 +z^8$ & $24$ & $1$ \\
$14$ & $E_{12}$ & $x^2 + y^3 + z^7$ & $42$ & $1$ \\
$20$ & $Q_{10}$ & $x^2 z + y^3 + z^4$ & $24$ & $1$ \\
$22$ & $Q_{12}$ & $x^2 z + y^3 + z^5$ & $15$ & $1$ \\
$37$ & $W_{13}$ & $x^2 + y^4 + y z^4$ & $16$ & $1$ \\
$38$ & $Z_{11}$ & $x^2 + y^3 z + z^5$ & $30$ & $1$ \\
$39$ & $Z_{13}$ & $x^2 + y^3 z + z^6$ & $18$ & $1$ \\
$50$ & $E_{13}$ & $x^2 + y^3 + y z^5$ & $30$ & $1$ \\
$58$ & $S_{11}$ & $x^2 z + x y^2 + z^4$ & $16$ & $1$ \\
$60$ & $Q_{11}$ & $x^2 z + y^3 + y z^3$ & $18$ & $1$ \\
$78$ & $Z_{12}$ & $x^2 + y^3 z + y z^4$ & $22$ & $1$ \\
$87$ & $S_{12}$ & $x^2 z + x y^2 + y z^3$ & $13$ & $1$ \\
\hline
$8$ & $W_{15}$ ($W_{1,0}$) & $x^2 + y^4 + z^6$ & $12$ & $2$ \\
$12$ & $E_{16}$ ($J_{3,0}$) & $x^2 + y^3 + z^9$ & $18$ & $2$ \\
$18$ & $U_{14}$ ($U_{1,0}$) & $x^3 + y^3 + x z^3 - y z^3$ & $9$ & $2$ \\
$24$ & $Q_{14}$ ($Q_{2,0}$) & $x^2 z + y^3 + z^6$ & $12$ & $2$ \\
$40$ & $Z_{15}$ ($Z_{1,0}$) & $x^2 + y^3 z + z^7$ & $14$ & $2$ \\
$63$ & $S_{14}$ ($S_{1,0}$) & $x^2 z + x y^2 + y^2 z^2 + z^5$ & $10$ & $2$ \\
\hline
$6$ & $N_{16}$ ($\mathit{NA}_{0,0}$) & $x^2 + y^5 + z^5$ & $10$ & $3$ \\
$19$ & $V_{15}$ ($\mathit{VA}_{0,0}$) & $x^2 y + x^2 z + y^4 + z^4$ & $8$ & $3$ \\ [1 ex]
\hline
\end{tabular}
\label{t2.2a}
\end{table}\begin{table}[ht]
\caption{Mixed $K3$ tail}
\centering
\begin{tabular}{ccccc}
\hline\hline
Yonemura & $f$ & $d$ & $\mu_f$ & $g$ \\ [0.5ex]
\hline
$1$ & $x^4 + y^4 + z^4$ & $4$ & $27$ & $3$  \\
$3$ & $x^3 + y^3 + z^6$ & $6$ & $20$ & $1$  \\
$5$ & $x^2 + y^6 + z^6$ & $6$ & $25$ & $2$  \\
$7$ & $x^2 + y^4 + z^8$ & $8$ & $21$ & $1$  \\
$10$ & $x^2 + y^3 + z^{12}$ & $12$ & $22$ & $1$  \\
$25$ & $x^2 z + y^3 + z^9$ & $9$ & $20$ & $1$  \\
$42$ & $x^2 + y^3 z + z^{10}$ & $10$ & $21$ & $1$  \\
$66$ & $x^2 z + x y^2 + y^3 z + z^7$ & $7$ & $20$ & $1$  \\ [1 ex]
\hline
\end{tabular}
\label{t2.2b}
\end{table}
\begin{rem}\label{rem222.1}
We comment on the pure case. The \emph{modality} $m_{f}$ is the dimension
of the moduli space of $\mu$-constant SQH deformations of $f$ (up
to analytic isomorphism). In each case in Table \ref{t2.2a}, the
period map for the VHS on $V_{f}$ has $1$-dimensional fibers, taking
values in a $(m_{f}-1)$-dimensional ball quotient. (This is visible
in the spectrum, which reveals that the $\zeta_{d}$-eigenspace of
$T^{\text{ss}}$ in $V_{f}$ has Hodge numbers $h^{2,0}=1$, $h^{1,1}=m_{f}-1$,
$h^{0,2}=0$.) To get any sort of Torelli result, one therefore has
to refine the Hodge-theoretic period map to a Brieskorn-lattice-theoretic
one \cite{Sa3}.

The singularities with $m_{f}=1$ (resp. $2,3$) are the Dolgachev/triangular
(resp. quadrilateral, ``pentagonal'') singularities, for which $\hat{\cE}$
has $3$ (resp. $4,5$) type-$A$ singularities, on $\hat{E}\cong\PP^{1}$.
(The resolution graph for $\tilde{X}_{0}\twoheadrightarrow X_{0}$
therefore looks like $3$ resp. $4,5$ ``arms'' attached to a central
node.) The moduli of these $m_{f}+2$ points on $\PP^{1}$ is essentially
what the period map detects.
\end{rem}

\begin{rem}
The first ``non-example'' one encounters (number $2$ in Yonemura's
list) is $f=x^{4}+y^{4}+z^{3}$ ($V_{18}'$), with spectrum $[\tfrac{10}{12}]+2[\tfrac{13}{12}]+[\tfrac{14}{12}]+3[\tfrac{16}{12}]+2[\tfrac{17}{12}]+2[\tfrac{19}{12}]+3[\tfrac{20}{12}]+[\tfrac{22}{12}]+2[\tfrac{23}{12}]+[\tfrac{26}{12}].$
So this $f$ has ``numerical $K3$'' tail as defined above;\footnote{That is, it satisfies \eqref{e3.00} (not \eqref{eq2.2d}).} we briefly
explain why it does \emph{not} have an actual $K3$ tail in our sense.

Denote the base-change by $t\mapsto t^{6}$ resp. $t^{12}$ by $\mathfrak{Z}'$
resp. $\mathfrak{Z}$, and the weighted blow-ups by $\mathcal{Y}'$
resp. $\mathcal{Y}$. The exceptional divisors are the $K3$ surface
$\hat{\cE}'\subset\mathbb{WP}[2:3:3:4]$ and the ``fake $K3$''
$\hat{\cE}\subset\mathbb{WP}[1:3:3:4]$ (with $h^{2,0}=1$ but $K_{\hat{\cE}}\neq\mathcal{O}_{\hat{\cE}}$).
The $K3$ has $4$ $A_{2}$ singularities along $\hat{E}'\cong\mathbb{P}^{1}$
and $3$ $A_{1}$ singularities not on $\hat{E}'$. More importantly,
at this ``partial base-change'' stage there is still order-two monodromy.
Writing $\rho:\hat{\cE}\backslash\hat{E}\to\hat{\cE}'\backslash\hat{E}'$
for the natural $2:1$ cover, we have $\rho_{*}\QQ\cong\QQ\oplus\chi$;
and the $(-1)$-eigenspace of $T^{\text{ss}}$ on $H_{\lm}^{2}(Y_{t}')$
is $H_{\van}^{2}(Y_{t}'):=\mathbb{H}^{0}(\pphi_{t}\QQ_{\cy'}[3])\cong H^{2}(\hat{\cE}'\backslash\hat{E}',\chi)\cong\frac{H^{2}(\hat{\cE}\backslash\hat{E})}{H^{2}(\hat{\cE}'\backslash\hat{E}')}\cong\QQ(-1)^{\oplus8}.$ 
\end{rem}

\section{Isolated hypersurface singularities: birational invariants\label{S2.4}}
We now focus on du Bois singularities, rational singularities, and even better singularities, the so-called {\it $k$-log-canonical} singularities. The idea is that for du Bois (and rational) singularities, there is a tight connection between the frontier Hodge numbers of the central fiber $X_0$ and those of the LMHS associated to a smoothing $\fx$ (e.g. see \cite{KL1} and \cite{KLS}). In the lowest dimensions, this is enough for comparing geometric compactifications of moduli spaces with Hodge-theoretic compactifications. A prototype of this is the 
result of Mumford and Namikawa  saying that there exists a map $\overline{\mathcal M}_g\to \overline{\mathcal A}_g^{Vor}$ from the Deligne--Mumford compactification of $\mathcal M_g$ to a specific toroidal compactification of the associated period domain $\mathcal A_g$. In other words, for curves, the geometric limit determines the Hodge-theoretic limit (even at the level of extension data for LMHS). Similar phenomena occur for $K3$ surfaces or even hyper-K\"ahler manifolds and related objects (e.g. \cite{Friedman,FS}, \cite{klsv}, \cite{La10}). 

\medskip

In higher dimension the picture is more subtle: it still holds that $X_0$ having slc singularities implies that it is du Bois, and then du Bois implies cohomologically insignificant (see \cite{KK} and \cite{steenbrink}); but this controls at most the frontier Hodge numbers. Here, we improve on this by considering the {\it $k$-log-canonical singularities}. 
 This is a concept that emerged recently in the context of the work of Mustata and Popa on Hodge ideals (see also Remarks \ref{Rem-kDB} and \ref{Rem-krat}). Specifically, a key result of the section is  Corollary \ref{cor2.4a} which gives a tighter relationship between the LMHS vs. MHS of the central fiber under the assumption of $k$-log-canonical singularities. One further point here is that the $k$-log-canonicity can be characterized in terms of the spectrum (Prop. \ref{prop2.4a}), which in turns leads to easy examples of $k$-log-canonical singularities by iterated suspensions of lower dimensional singularities (see \S\ref{sec-ex-klog}).

\medskip

Let $\fx$ be a smooth variety of dimension $n+1$, and $X_0\subset \fx$ a hypersurface with an isolated singularity at $p$, locally cut out by $f\in\co_{\fx,p}$.  
Given a log-resolution $\tilde{\fx}\overset{\pi}{\to}\fx$ of $X_0$, and a holomorphic form $\omega \in \Omega^{n+1}(U)$ defined on a small neighborhood $U\subset \fx$ of $p$, residue theory suggests the question: \emph{for which $\kappa\in\QQ_{>0}$ does $\tilde{\Omega}:=\pi^*(\tfrac{\omega}{f^{\kappa}})$ have no worse than log poles on the NCD $\pi^{-1}(p)=\left(\bigcup_{i} \cE_i\right)\cup\tilde{X}_0$?}  
Writing (on $\tilde{U}=\pi^{-1}(U)$)\footnote{Typically the $a_i$ (resp. $b_i$) are called discrepancies (resp. ramification numbers).  The computation here is heuristic, with $\kappa \in \QQ$.} 
\begin{equation}
K_{\tilde{\fx}}-\pi^* K_{\fx} =: \Sigma_i \; a_i \cE_i \;\;\;\;\;\text{and}\;\;\;\;\;\pi^* X_0 - \tilde{X}_0 =: \Sigma_i \; b_i \cE_i \; ,
\end{equation}
taking $\pi^*$ of $\tfrac{\omega}{f^{\kappa}}\in \Gamma \left( U ,K_{\fx}(\kappa X_0)\right)$ gives 
\begin{equation} \label{eq2.4(1)}
\tilde{\Omega}\in \Gamma \{ \tilde{U}, K_{\tilde{\fx}}(\kappa \tilde{X}_0 + \Sigma_i (\kappa b_i - a_i)\cE_i )\}  .
\end{equation}
Our choice of $\kappa$ must prevent the coefficients of $\tilde{X}_0$ and $\cE_i$ in \eqref{eq2.4(1)} from exceeding $1$; equivalently, $\kappa$ must not exceed the \emph{log-canonical threshold}
\begin{equation} \label{eq2.4(2)}
\alpha_f := \min \left\{1,\tfrac{1+a_i}{b_i} \right\} .
\end{equation}
More precisely, if we identify $U$ as an open in $\CC^{n+1}$ and $f$ as the restriction of a polynomial map from $\CC^{n+1}\to \CC$, then this gives a sharp answer to our question for $\omega:=d\underline{x}:=dx_1 \wedge \cdots \wedge dx_{n+1}$. Of course, $\alpha_f =1$ $\iff$ $(\fx,X_0)$ is a log-canonical pair $\iff$ $(X_0,p)$ is log-canonical. 

On the other hand, if $\alpha_f <1$, it is the smallest of the \emph{jumping numbers} $J_f\subset \QQ_{>0}$ for the \emph{multiplier ideals}
\begin{equation}
\mathcal{A}_{\kappa}:=\left\{\phi\in\co_{\fx,p}\mid\;\tfrac{1+a_i +\nu_i(\phi)}{b_i}>\kappa \;(\forall i) \right\} ,
\end{equation}
where $\nu_i(\phi):=\mathrm{ord}_{\cE_i}(\phi\circ \pi)$.  
These comprise all ``sharp thresholds'' for all $\omega=\phi\,d\underline{x}$.  
At the same time, the jumping numbers greater than $1$ are not independent of the choice of $\pi$, so are not invariants of $f$.

An invariant with similar motivation is introduced by Mustata and Popa \cite{MP} in the context of the \emph{Hodge ideals} $\{ \mathcal{I}_r(X_0)\}_{r\in \ZZ_{\geq 0}}$, which measure the difference between the pole-order and Hodge filtrations along $X_0$:
\begin{equation}
F_r \co_{\fx}(*X_0)=\co_{\fx}((r+1)X_0)\otimes \mathcal{I}_r(X_0),
\end{equation}
where $(\co_{\fx}(*X_0),F_{\bullet})$ is thought of as the filtered $\mathcal{D}$-module underlying the mixed Hodge module $j_* \QQ_{\fx\setminus X_0}[n+1]$.  
Call $(X_0,p)$ \emph{$k$-log-canonical} if $\mathcal{I}_r(X_0)\overset{\mathrm{loc}}{=}\co_{\fx}$ for $r\leq k$, where ``loc'' means ``in a neighborhood of $p$'', and write $\lambda_f -1$ for the maximal\footnote{If $(X_0,p)$ is not ($0$-)log-canonical, we set $\lambda_f=0$.} such $k$ ($\geq 0$ $\iff$ $(X_0,p)$ is log-canonical).  
Write $\ell_f$ for the \emph{generation level} \cite{Sa5} of $\co_{\fx}(*X_0)$, i.e. the smallest integer $\ell$ for which $F_k \mathcal{D}_{\fx}\cdot F_{\ell}\co_{\fx}(*X_0)=F_{k+\ell}\co_{\fx}(*X_0)$ for all $k\geq 0$.  
Unlike $\alpha_f$, however, both $\lambda_f$ and $\ell_f$ are integers, and $\ell_f$ can change under $\mu$-constant deformations.

The asymptotic behavior of periods provides a quantity more closely tethered to the log-canonical threshold.  Let $\gamma(t)$ be a multivalued section of $\{H_n(\mathfrak{F}_t,\ZZ)\}_{t\neq 0}$, where $\mathfrak{F}_t = (f|_U)^{-1}(t)$ is the Milnor fiber (and $\mathfrak{F}_0=X_0\cap U$).  Consider the collection of periods \cite{Va2}
\begin{equation} \label{eq2.4(3)}
\mathscr{P}^{\omega,\gamma}(t) := \int_{\gamma(t)}\mathrm{Res}_{\mathfrak{F}_t}\left(\tfrac{\omega}{f-t}\right) = \frac{1}{t}\sum_{\alpha\in\QQ_{>0}}\sum_{k=0}^n a^{\omega,\gamma}_{k,\alpha}t^{\alpha}\log^k(t)
\end{equation}
taken over all $\omega$ and $\gamma$, and define the \emph{period exponent}
\begin{equation} \label{eq2.4(4)}
\beta_f := \min\{\alpha\mid a^{\omega,\gamma}_{k,\alpha}\neq 0 \text{ for some }\omega,\gamma,k\} .	
\end{equation}
Since the $\lim_{t\to 0}$ of $t^{1-\kappa}\int_{\gamma(t)}\mathrm{Res}_{\mathfrak{F}_t}\left(\tfrac{\omega}{f-t}\right)=\int_{\gamma(t)}\mathrm{Res}_{\mathfrak{F}_t}\left( \tfrac{\omega f^{1-\kappa}}{f-t}\right)$ is finite as long as $\pi^*\left(\tfrac{\omega}{f^{\kappa}}\right)$ has log poles (i.e. $\kappa\leq \alpha_f$), and infinite if $\kappa>\beta_f$ by \eqref{eq2.4(3)}-\eqref{eq2.4(4)}, we see at once that $\alpha_f \leq \beta_f$.

A connection between these invariants and the spectrum\footnote{Recall in particular from $\S$\ref{S2.2} that $\sfm$ denotes the smallest element of $|\sigma_f|$.} is already suggested by \eqref{eq2.4(3)}-\eqref{eq2.4(4)}: given $a^{\omega,\gamma}_{k,\beta_f} \neq 0$, we may assume (by applying $T^{\text{un}}-I$ to $\gamma$) that $k=0$; and replacing $\gamma$ by a suitable flat section $\hat{\gamma}\in \{ H_n(\mathfrak{F}_t,\CC)\}_{t\neq 0}$, that $\int_{\hat{\gamma}(t)}\mathrm{Res}_{\mathfrak{F}_t}(\tfrac{\omega}{f-t})=t^{\beta_f -1}h(t)$ with $h$ holomorphic and nonvanishing at $0$.  
Let $\cv_f^{\vee}$ be the canonical extension of $\{H_n(\mathfrak{F}_t,\CC)\}$, with fiber $V_f^{\vee}$ at $0$.  
Writing $\beta_f -1 =c-\lambda$ with $\lambda\in [0,1)$ and $c\in \ZZ_{\geq 0}$, $\tilde{\gamma}(t):=t^{\lambda}\hat{\gamma}(t)$ is a section of $\cv^{\vee}_f$ (not vanishing at $t=0$).  
Pairing this with $\eta:=(\nabla_{\partial_t})^c \,\mathrm{Res}_{\mathfrak{F}_t}( \tfrac{\omega}{f-t})\in \Gamma(\disc,\mathcal{F}^{n-c}_e\cv_f)$ yields $\int_{\tilde{\gamma}(t)}\eta = t^{\lambda}(\tfrac{d}{dt})^c t^{\beta_f -1}h(t) = c\cdot h(t)+\co(t)$, so that $\gr_F^{n-c} V_{f,\lambda}\neq\{0\}$ and $n-c+\lambda \in |\sigma_f|$ $\implies$ $n+1-(n+c+\lambda)=\beta_f\in |\sigma_f|$ $\implies$ $\sfm\leq \beta_f$.\footnote{The reverse inequality holds too, since (for $\sfm=c'-1+\lambda'$) \emph{given} $\eta'\in \Gamma(\disc,\mathcal{F}_e^{n-c'}\cv_f )$ and $\tilde{\gamma}'-t^{\lambda '}\hat{\gamma}'$ with $\lim_{t\to 0}\int_{\tilde{\gamma}(t)}\eta' \in \CC^*$, we can write $\eta' = \mathrm{Res}(\frac{\omega'}{(f-t)^{c+1}}) = (\nabla_{\partial_t})^c \mathrm{Res}(\frac{\omega'}{f-t})$.  We also indicate where to find a proof in the literature below.}

The following statement surveys what is known about the inter-relationships amongst all these invariants:

\begin{prop} \label{prop2.4a}
For isolated hypersurface singularities we have:
\begin{enumerate}[label=\textup{({\roman*})}]
\item $\sfm = \beta_f$.
\item $\sfm \geq \alpha_f$, with equality iff $\sfm\leq 1$.
\item $J_f\cap (0,1]=|\sigma_f|\cap (0,1]$.
\item $\lambda_f = \lfloor \sfm \rfloor$.
\item $(X_0,p)$ slc $\iff$ log-canonical $\iff$ du Bois $\iff$ $\sfm\geq 1$.
\item $(X_0,p)$ log-terminal $\iff$ canonical $\iff$ rational $\iff$ $\sfm>1$.
\item $\ell_f \leq \lfloor n-\sfm\rfloor$\textup{;} in particular, $\ell_f \leq n-2$ for $(X_0,p)$ rational and $\ell_f \leq n-1$ in general.
\end{enumerate}
\end{prop}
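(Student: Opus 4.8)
The plan is to regard Proposition~\ref{prop2.4a} as a dictionary to be assembled from the literature, with (i) as the keystone and the other parts cascading from it. For (i) itself, the inequality $\sfm\leq\beta_f$ is exactly the pairing computation carried out in the paragraph preceding the statement: pair the section $\tilde{\gamma}(t)=t^{\lambda}\hat{\gamma}(t)$ of $\cv_f^{\vee}$ against $\eta=(\nabla_{\partial_t})^{c}\mathrm{Res}_{\fF_t}(\tfrac{\omega}{f-t})$ and read off the leading $t^{\beta_f-1}$-term. For the reverse inequality $\beta_f\leq\sfm$ I would run this backwards: writing $\sfm=c'-1+\lambda'$ with $\gr_F^{n-c'}V_{f,\lambda'}\neq\{0\}$, I would realize a generator of that graded piece as $\mathrm{Res}(\tfrac{\omega'}{(f-t)^{c'+1}})=(\nabla_{\partial_t})^{c'}\mathrm{Res}(\tfrac{\omega'}{f-t})$ for a suitable holomorphic $(n+1)$-form $\omega'$, and pair it against a flat multivalued $n$-cycle to produce a nonzero $t^{\sfm-1}$-coefficient in \eqref{eq2.4(3)}. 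The one step here that is not pure linear algebra is the assertion that every element of $\mathcal{F}^{n-c'}_e\cv_f$ arises as such a residue --- the surjectivity of holomorphic forms onto the associated graded of the Brieskorn lattice, i.e. Varchenko's asymptotic mixed Hodge structure \cite{Va2,Va4}, equivalently the Jacobian-ring description of $(V_f,F^{\bullet})$ recalled in $\S$\ref{S5.2} (Thm.~\ref{t5.2a}).

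Given (i), I would next handle (iii) and then (ii). For (iii) I would invoke the theorem of Budur and of Ein--Lazarsfeld--Smith--Varolin --- also a consequence of M. Saito's microlocal $\cv$-filtration --- that on $(0,1]$ the jumping numbers of the multiplier ideals $\mathcal{A}_{\kappa}$ coincide with the spectral numbers $|\sigma_f|\cap(0,1]$; the mechanism is that for $\kappa\in(0,1]$ the multiplier-ideal filtration along $X_0$ agrees with the $\cv$-filtration, and the latter carries the spectrum. Part (ii) then drops out: by definition $\alpha_f=\min\{1,\tfrac{1+a_i}{b_i}\}$ is the least jumping number clipped at $1$, so $\alpha_f=\min\{1,\sfm\}$ using (iii) together with $|\sigma_f|\subset(0,n+1)$ (Prop.~\ref{prop1.2}(ii)), which is precisely the statement that $\sfm\geq\alpha_f$ with equality iff $\sfm\leq1$.

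The remaining parts are the Hodge-module inputs. Part (iv) is the Mustata--Popa--Saito characterization \cite{MP,Sai16} --- the isolated-hypersurface case of Theorem~\ref{t-b}(i)/Theorem~\ref{thm2.5A} --- that $\mathcal{I}_r(X_0)\overset{\mathrm{loc}}{=}\co_{\fx}$ for all $r\leq k$ iff $\sfm\geq k+1$; since $\sfm\in(0,n+1)$, the maximal such $k\geq 0$ is $\lfloor\sfm\rfloor-1$ when $\sfm\geq1$ and there is none when $\sfm<1$ (in which case $\lambda_f$ is set to $0=\lfloor\sfm\rfloor$ by convention), giving $\lambda_f=\lfloor\sfm\rfloor$. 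For (v), the case $k=0$ of $k$-log-canonicity is ordinary log-canonicity, so (iv) at $k=0$ already yields that log-canonicity is equivalent to $\sfm\geq1$; that du Bois is equivalent to $\sfm\geq1$, and the rational-singularity half of (vi) that rationality is equivalent to $\sfm>1$, are M. Saito's theorems on $b$-functions, spectra and rational singularities \cite{Sai16} (rationality also forcing $n>1$, consistent with Prop.~\ref{prop2.1b}); slc is equivalent to lc because an isolated hypersurface singularity is normal for $n\geq2$ and is a node for $n=1$; and log-terminal is equivalent to canonical, and canonical to rational, by standard facts about Gorenstein singularities. Finally (vii) is the Mustata--Popa comparison \cite{MP,Sa5} between the generation level of $\co_{\fx}(*X_0)$ and the minimal exponent, namely $\ell_f\leq n-\sfm$, whence $\ell_f\leq\lfloor n-\sfm\rfloor$ since $\ell_f\in\ZZ$; the two concluding clauses then follow by inserting $\sfm>1$ from (vi) and $\sfm>0$ from Prop.~\ref{prop1.2}.

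The work in all of this is bookkeeping rather than new ideas, and the main obstacle is reconciling the competing normalizations of the spectrum --- Steenbrink's spectral pairs, the microlocal minimal exponent $\widetilde{\alpha}_f$ of Saito and of Mustata--Popa, and the $(p+\lambda,p+q)$ convention of \eqref{msp} --- so that the inequality $\sfm\geq k+1$ means the same thing in every source cited, together with a careful pass over the boundary cases $\sfm<1$ and $n=1$ where several invariants are defined by fiat. The only genuinely non-formal ingredient is the reverse inequality in (i): that every spectral number is actually attained as the leading exponent of a period integral, which rests on Varchenko's theorem (equivalently, surjectivity of forms onto the graded Brieskorn lattice). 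Everything else in the proposition is a corollary of (i), (iii), (iv), and standard facts about Gorenstein singularities.
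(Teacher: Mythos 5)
Your proposal is correct and follows essentially the same route as the paper: both are dictionary proofs assembled from Saito, Varchenko, Musta\c{t}\u{a}--Popa, and Koll\'ar--Mori, with the paper routing (i), (ii), (iv) through the smallest root $\tilde{\alpha}_f$ of the reduced Bernstein--Sato polynomial ($\sfm=\tilde{\alpha}_f$ by Saito, $\beta_f=\tilde{\alpha}_f$ by Varchenko, $\alpha_f=\min\{1,\tilde{\alpha}_f\}$ by Koll\'ar, $\lambda_f=\lfloor\tilde{\alpha}_f\rfloor$ by Saito). Your two deviations --- proving (i) directly by the period-pairing argument (which is exactly the sketch the paper gives in the paragraph and footnote preceding the statement) and deducing (ii) from (iii) rather than from Koll\'ar's theorem --- are both valid and change nothing of substance.
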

\begin{proof}
The \emph{Bernstein--Sato polynomial} $b_f(s)$ of $f$ \cite{Be} is the monic polynomial of smallest degree such that (in a neighborhood of $p$) we have $Pf^{s+1}=f^s b_f(s)$ for some nonzero $P\in \mathcal{D}_X[s]$. Let $\tilde{\alpha}_f$ denote the smallest root of $\hat{b}_f(s):=\tfrac{b_f(-s)}{1-s}$.  
Then by work of M. Saito \cite[(4.1.3)]{Sa3}, $\sfm=\tilde{\alpha}_f$; combining this with $\beta_f = \tilde{\alpha}_f$ (Varchenko, \cite{Va1}) gives (i), with $\alpha_f=\text{min}\{1,\tilde{\alpha}_f\}$ (Koll\'ar, \cite[Thm. 10.6]{kollarpairs}) gives (ii), and with $\lfloor \tilde{\alpha}_f \rfloor = \lambda_f$ (Saito, \cite[Cor. 2]{Sai16}; see also \cite[Cor. 9.9]{Po}) gives (iv). 
Since isolated hypersurface singularities are normal and Gorenstein, \cite[Cor. 5.24]{KM} yields $(X_0,p)$ rational $\implies$ canonical $\implies$ log-terminal $\implies$ rational; while (iv), Prop. \ref{prop2.1b}, and \cite{KK} give $(X_0,p)$ du Bois $\implies$ $\sfm\geq 1$ $\implies$ $\lambda_f\geq 1$ $\implies$ log-canonical $\implies$ slc $\implies$ du Bois.  To get the last ``$\Longleftarrow$'' of (vi), one can for instance appeal to \cite[Thm. 0.4]{Sa4}.  Finally, (iii) is \cite[$\S$4]{Va2}, and (vii) follows from  \cite[Thm. A]{MP-Inv} 
 (see also  \cite{Sa5} and \cite{MP,MOP}).
\end{proof}

Returning to the scenario where $X_0 \subset \cx \overset{f}{\to}\disc$ ($\cx$ smooth; $f$ proper, smooth over $\disc^*$) has a unique singular point (at $p$), here is one consequence of Prop. \ref{prop2.4a} which strengthens the results above (Prop. \ref{prop2.1b} and Thm. \ref{th-I1}) in the isolated hypersurface singularity case.

\begin{cor} \label{cor2.4a}
$(X_0,p)$ is $k$-log-canonical $\iff$ $\sfm\geq 1+k$, in which case
\begin{equation} \label{eq2.4C1}
\gr_F^p H^n(X_0) \cong \gr_F^p H_{\lm}^n (X_t) \;\;\;\text{for}\;\;0\leq p \leq k	
\end{equation}
and
\begin{equation} \label{eq2.4C2}	
\gr_F^p W_{n-1}H^n(X_0) = \{0\} \;\;\;\text{for} \;\; 0\leq p \leq k-1.
\end{equation}	
For singularities with $\sfm>1+k$, \eqref{eq2.4C2} also holds for $p=k$.
\end{cor}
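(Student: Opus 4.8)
The first equivalence is immediate from Proposition \ref{prop2.4a}(iv): $(X_0,p)$ is $k$-log-canonical iff $\lambda_f\geq 1+k$, and since $\lambda_f=\lfloor\sfm\rfloor$ with $1+k\in\ZZ$, this is exactly $\sfm\geq 1+k$. For \eqref{eq2.4C1} I would apply $\gr_F^p$ to the exact sequence of MHS \eqref{eq2.1a}. As MHS morphisms are $F$-strict, $\gr_F^{\bullet}$ is exact, so it suffices that $\gr_F^p V_f=\bigoplus_{\lambda,q}V_{f,\lambda}^{p,q}=\{0\}$ for $p\leq k$: a nonzero summand would put $p+\lambda$ into $|\sigma_f|$, against $p+\lambda<p+1\leq 1+k\leq\sfm=\min|\sigma_f|$. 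Hence \eqref{eq2.1a} yields $\sp\colon\gr_F^p H^n(X_0)\overset{\sim}{\to}\gr_F^p H^n_{\lm}(X_t)$, and incidentally $\gr_F^p H^{n+1}_{\pha}=\{0\}$, for $0\leq p\leq k$.

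The work is in \eqref{eq2.4C2} and its $p=k$ sharpening. The plan is to locate $W_{n-1}H^n(X_0)$ inside $H^n_{\lm}(X_t)$ and transport the question to the spectrum through $\can$. By Clemens--Schmid \eqref{I5}, $\sp$ identifies $H^n(X_0)$ with $H^n_{\lm}(X_t)^T=\ker(N)\cap M$, where $M\subseteq H^n_{\lm}(X_t)$ is the $T^{\mathrm{ss}}=I$ part (on its complement $T-I$ is invertible, and on $M$ one has $\ker(T-I)=\ker N$); by \eqref{eq2.1a} this is also $\ker(\can)$. Decomposing $M$ into $\mathfrak{sl}_2$-strings, $\ker(N)\cap M$ is the span of the string bottoms, so for $i\geq 1$ its weight-$(n-i)$ graded piece is spanned by the bottoms of the length-$(i+1)$ strings; $N^i$ identifies this Hodge structure with $P\gr^W_{n+i}M(i)$, the Tate-twisted primitive part of $\gr^W_{n+i}M$. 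So $\gr_F^p\gr^W_{n-i}H^n(X_0)\cong\gr_F^{p+i}P\gr^W_{n+i}M$, and everything reduces to bounding this.

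Concretely, I would fix $i\geq 1$ and a nonzero primitive $\bar x\in\gr^W_{n+i}M$ of Hodge bidegree $(p+i,\,n-p)$, and chase it through $\can$ (which induces maps on the $\gr^W$-pieces, compatibly with $N$). Since $\ker(\can)$ has weight $\leq n$, the map $\gr^W_{n+i}\can$ is injective, so $\can(\bar x)\neq 0$ of the same bidegree and weight $n+i$; and $N^i\can(\bar x)=\can(N^i\bar x)=0$, because $N^i\bar x$ is a string-bottom class in $\gr^W_{n-i}(\ker N\cap M)=\ker(\gr^W_{n-i}\can)$. Now Proposition \ref{prop1.2} does the rest: $N$-strings in $V_{f,\neq 0}$ are centered at weight $n$, so no nonzero weight-$(n+i)$ vector there lies in $\ker N^i$, forcing $\can(\bar x)\in V_{f,0}$; and among the $N$-strings in $V_{f,0}$ (centered at weight $n+1$) only those of length $i$ have a weight-$(n+i)$ part in $\ker N^i$, namely their tops. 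Hence $\can(\bar x)$ is a combination of tops of length-$i$ strings in $V_{f,0}$, so $N^{i-1}\can(\bar x)$ is a nonzero combination of the matching bottoms: a vector in $V_{f,0}$ of weight $n-i+2$ and bidegree $(p+1,\,n-p-i+1)$. Therefore $V_{f,0}^{p+1,\,n-p-i+1}\neq\{0\}$, so $p+1\in|\sigma_f|$, i.e. $p+1\geq\sfm$. Contrapositively $\gr_F^p\gr^W_{n-i}H^n(X_0)=\{0\}$ whenever $\sfm>p+1$; for $\sfm\geq 1+k$ this covers every $p\leq k-1$, and for $\sfm>1+k$ also $p=k$. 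Summing over $i\geq 1$ gives \eqref{eq2.4C2} and its refinement.

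The step I expect to be the main obstacle is that last chase. The bidegree $(p+i,n-p)$ of $\can(\bar x)$ in weight $n+i$ only gives the weak bound $p+i\geq\sfm$; obtaining the sharp $p+1\geq\sfm$ requires pinning down via Proposition \ref{prop1.2} that $\can(\bar x)$ actually lands in $V_{f,0}$ and in strings of length exactly $i$, so that the \emph{bottom} of that string — with spectral number $p+1$ — is what one reads off. A subsidiary point is to confirm that $\can$, being a strict MHS morphism, transfers the string combinatorics from $M$ to $V_f$ faithfully on the pieces $\gr^W_{n+i}$ ($i\geq 1$), where it is injective.
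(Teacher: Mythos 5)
Your proof is correct, and its skeleton matches the paper's: the equivalence is Proposition \ref{prop2.4a}(iv), \eqref{eq2.4C1} is $F$-strictness applied to \eqref{eq2.1a} once one notes $\gr_F^{\leq k}V_f=\{0\}$, and \eqref{eq2.4C2} comes from identifying $H^n(X_0)$ with the $N$-string bottoms in the $T^{\text{ss}}$-invariant part of $H^n_{\lm}(X_t)$ and pushing the rest of the string through $\can$. Where you genuinely diverge is in the execution of that last step. The paper argues in two stages: moving one step up the string gives $\can(H^n_{\lm}(X_t)^{p+1,q+1})\neq\{0\}$, hence only the floor bound $p+1\geq\lfloor\sfm\rfloor$ (enough for \eqref{eq2.4C2}); the $p=k$ refinement then requires a second argument, using \eqref{eq2.4C1} to force $T^{\text{ss}}$-triviality of the relevant strings so that the spectral contribution is the \emph{integer} $k+1$. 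You instead go to the primitive top of the string, use injectivity of $\gr^W_{n+i}\can$ for $i\geq 1$ together with the fact that the image lies in $V_{f,0}$ (which, note, follows immediately from $T^{\text{ss}}$-equivariance of $\can$ and $\bar x\in M$ -- your detour through Proposition \ref{prop1.2}(i) is not needed), pin the image to tops of strings centered at weight $n+1$ via Proposition \ref{prop1.2}(ii), and descend by $N^{i-1}$ to Hodge level $p+1$ with $\lambda=0$. This yields the sharp bound $\sfm\leq p+1$ in one stroke and hence both \eqref{eq2.4C2} and its $p=k$ refinement simultaneously -- a cleaner uniform treatment than the paper's. The only blemish is terminological: your ``length'' of a string is the paper's length plus one (you count dimension), but your formulas are internally consistent and the argument is sound.
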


Given the characterization of log canonical and rational singularities noted in Proposition \ref{prop2.4a}, by analogy, we define higher rational singularities\footnote{(Note added in proof) Subsequent to our work, the subject grew up tremendously. We refer to Remark \ref{Rem-krat} and references within for an overview.} as follows
\begin{defn}\label{def-krat}
Let $(X_0,p)$ be an isolated hypersurface singularity. We say that $(X_0,p)$ is {\it $k$-rational} if  $\sfm>1+k$.
\end{defn}

\begin{proof}[Proof of Cor.~\ref{cor2.4a}]
Referring to \eqref{eq2.1a}, \eqref{eq2.4C1} is obviously induced by $\sp$; and the ``$\iff$'' is just (iv).  For \eqref{eq2.4C2}, suppose $H^n_{\lm}(X_t)^{p,q}\neq \{0\}$ for some $p+q<n$; this is contained in an $N$-string centered about $p+q=n$, only the bottom of which can come from $H^n(X_0)$. So $\can(H^n_{\lm}(X_t)^{p+1,q+1})\neq\{0\}$, which forces $p+1\geq \lfloor \sfm\rfloor$ hence $p\geq k$.  Moreover, for $k+q<n$, \eqref{eq2.4C1} $\implies$ $H^n(X_0)^{k,q}\cong H^n_{\lm}(X_t)^{k,q}$ $\implies$ $T^{\text{ss}}$ acts trivially on the $N$-strings ending on $(H^n_{\lm})^{k,q}$.  So if $(H^n_{\lm})^{k,q}\neq \{0\}$ then $(H^n_{\lm})^{k+1,q+1}$ contributes $\{ k+1\}$ to $|\sigma_f|$.
\end{proof}

\begin{rem}\label{rem-cubic}
Combining Proposition \ref{prop2.4a} with the Sebastiani--Thom formula (see Section \ref{S2.6}), one sees that easy examples of $k$-log-canonical singularities can be obtained by iterated suspension of lower dimensional singularities (see \S\ref{sec-ex-klog} for precise statements). Here, we only point out that almost all of  the singularities that occur for GIT semistable cubic $4$-folds are either ADE singularities and or double suspensions of log canonical surface singularities. From the discussion of \S\ref{sec-ex-klog} (or direct computations and Proposition \ref{prop2.4a}), one sees that these singularities are $1$-log-canonical, which in turn recovers a significant part of 
 \cite{La10} (i.e. the behavior of the period map for mildly singular cubic fourfolds).
\end{rem}

We conclude this section with a discussion of the relationship between the singularities of the central fiber of a $1$-parameter degeneration and the monodromy of the family. The Picard--Lefschetz monodromy (defined below) should be understood as a relaxing of the natural notion of finite monodromy. This notion and the discussion below are inspired by the study of degenerations of hyper-K\"ahler manifolds and Calabi--Yau varieties (e.g. see \cite{klsv}, \cite{tosatti}, and \cite{wang}). The point here is that in higher dimensions (e.g. already for Calabi--Yau threefolds), the requirement of finite monodromy is too rigid, and should be replaced by Picard--Lefschetz monodromy. 

\begin{defn}
We shall say $\cx\overset{f}{\to}\disc$ has \emph{Picard--Lefschetz monodromy} if the following equivalent conditions hold:  (i) $\dim H^n_{\lm}(X_t)^{0,n}=\dim H^{0,n}(X_t)$; (ii) $\gr_F^0 W_{n-1} H^n_{\lm}(X_t)=\{0\}$; (iii) $H^n_{\lm}(X_t)$ has no $N$-strings terminating on the ``$q$-axis''.
\end{defn}

\begin{cor} \label{cor2.4b}
Assume $(X_0,p)$ is log-canonical. Then $(X_0,p)$ is canonical $\iff$ $\cx\overset{f}{\to}\disc$ has Picard--Lefschetz monodromy and $H^{1,n}_{\pha}(X_0)=\{0\}$.	
\end{cor}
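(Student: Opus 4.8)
The plan is to rephrase both sides spectrally via Proposition \ref{prop2.4a} and then chase bidegrees through the vanishing-cycle sequence \eqref{eq2.1a}, keeping the $T^{\text{ss}}$-eigenspace decomposition carefully in view. By Proposition \ref{prop2.4a}(v),(vi) the hypothesis says $\sfm\geq 1$, while ``$(X_0,p)$ canonical'' is equivalent to $\sfm>1$, i.e.\ (as $\sfm=\min|\sigma_f|$) to $1\notin|\sigma_f|$; and $1\in|\sigma_f|$ holds precisely when $V_{f,0}^{1,q}\neq\{0\}$ for some $q$, since the only way to get a spectral number $p+\lambda=1$ with $\lambda\in[0,1)$ is $p=1$, $\lambda=0$. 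On the other side: $H^{n+1}_{\pha}(X_0)$ is pure of weight $n+1$ and level $\leq n-1$ (Lemma \ref{lem2.1} ff.), so $h^{1,n}(H^{n+1}_{\pha}(X_0))=\dim H^{n+1}_{\pha}(X_0)^{1,n}$; and Picard--Lefschetz monodromy in form (ii) says exactly that $H^n_{\lm}(X_t)^{0,q}=\{0\}$ for all $q<n$ (since $\gr_F^0 W_{n-1}H^n_{\lm}(X_t)=\bigoplus_{q<n}H^n_{\lm}(X_t)^{0,q}$). I will work with the $T^{\text{ss}}$-invariant part of \eqref{eq2.1a},
\[
0\to H^n(X_0)\overset{\sp}{\to} H^n_{\lm}(X_t)^{T^{\text{ss}}}\overset{\can}{\to} V_{f,0}\overset{\delta}{\to} H^{n+1}_{\pha}(X_0)\to 0
\]
(exact, as taking $T^{\text{ss}}$-invariants is exact and the end terms already carry trivial $T^{\text{ss}}$), using $\ker(\can)=\image(\sp)=H^n_{\lm}(X_t)^T=\ker(N)\cap H^n_{\lm}(X_t)^{T^{\text{ss}}}$, the fact that for $\lambda\neq 0$ the map $\can$ restricts to an isomorphism $H^n_{\lm}(X_t)_\lambda\overset{\sim}{\to}V_{f,\lambda}$, and strictness of morphisms of MHS (to locate classes in prescribed Hodge--Deligne bidegree).

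For $(\Rightarrow)$, assume $\sfm>1$; then $|\sigma_f|\subset(1,n)$ by Proposition \ref{prop2.1b} (so $n\geq 2$), whence $V_f^{0,q}=\{0\}$ for all $q$ and $V_{f,0}^{1,q}=\{0\}$ for all $q$. Then $H^{n+1}_{\pha}(X_0)^{1,n}=\delta(V_{f,0}^{1,n})=\{0\}$, giving $h^{1,n}_{\pha}=0$. For the monodromy statement I argue by contradiction: if $H^n_{\lm}(X_t)^{0,q}\neq\{0\}$ for some $q<n$, decompose it by $T^{\text{ss}}$. A nonzero component with eigenvalue $\mu\neq 0$ would give, via the isomorphism $\can\colon H^n_{\lm}(X_t)_\mu\overset{\sim}{\to}V_{f,\mu}$, a nonzero $V_f^{0,q}$ --- impossible. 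A nonzero $T^{\text{ss}}$-invariant component sits at the bottom of an $N$-string in $H^n_{\lm}(X_t)^{T^{\text{ss}}}$ centered about weight $n$, hence of length $n-q\geq 1$; the next step up is a nonzero $v\in\bigl(H^n_{\lm}(X_t)^{T^{\text{ss}}}\bigr)^{1,q+1}$ with $Nv\neq 0$, so $v\notin\ker(N)=\ker(\can)$ and $\can(v)$ is a nonzero element of $V_{f,0}^{1,q+1}$ --- again impossible. Hence $H^n_{\lm}(X_t)^{0,q}=\{0\}$ for all $q<n$.

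For $(\Leftarrow)$ I prove the contrapositive: assume $(X_0,p)$ log-canonical but not canonical, so $\sfm=1$ and $V_{f,0}^{1,q}\neq\{0\}$ for some $q$. Since $(X_0,p)$ is du Bois, the $N$-strings in $V_{f,0}$ have length $\leq n-1$ and are centered about weight $n+1$ (Proposition \ref{prop1.2}(ii)), so the weight $1+q$ lies in $[2,n+1]$, i.e.\ $1\leq q\leq n$. If $q<n$, then $V_{f,0}^{1,q}$ has weight $<n+1$, so $\delta$ kills it by purity of $H^{n+1}_{\pha}(X_0)$, placing $V_{f,0}^{1,q}$ in $\image(\can)$; strictness then yields a nonzero $v\in\bigl(H^n_{\lm}(X_t)^{T^{\text{ss}}}\bigr)^{1,q}$ with $\can(v)\neq 0$, hence $v\notin\ker(N)$, so $Nv$ is a nonzero element of $H^n_{\lm}(X_t)^{0,q-1}$ with $q-1<n$ --- Picard--Lefschetz fails. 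If $q=n$: either $H^{n+1}_{\pha}(X_0)^{1,n}\neq\{0\}$, so $h^{1,n}_{\pha}\neq 0$; or $\delta$ kills $V_{f,0}^{1,n}$ and the identical chase gives $Nv\neq 0$ in $H^n_{\lm}(X_t)^{0,n-1}$, again contradicting Picard--Lefschetz. Thus one of the two right-hand conditions fails.

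The step needing the most care --- the main obstacle, in the sense of bookkeeping rather than depth --- is keeping the two structures synchronized: that the entire ``integral'' phenomenon (the spectral number $1$, and the nonzero top piece of $H^{n+1}_{\pha}$) lives in the $\lambda=0$ sector where $\ker(\can)=\ker(N)$, so all the action happens inside the displayed four-term sequence; and that the $\mathfrak{sl}_2$-string/weight-monodromy geometry of $H^n_{\lm}(X_t)$ is exactly what lets one trade a nonzero $(0,q)$-piece with $q<n$ against a nonzero $(1,q+1)$- or $(1,q)$-piece not annihilated by $N$. Matching ``which Hodge--Deligne pieces are forced nonzero'' against ``which spectral numbers are forbidden'' is where an error would most easily creep in; I take as given the equivalence of the three formulations of Picard--Lefschetz monodromy and the standard vanishing $H^n_{\lm}(X_t)^{0,q}=\{0\}$ for $q>n$.
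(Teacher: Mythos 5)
Your proof is correct and follows essentially the same route as the paper's: translate ``canonical'' into $\sfm>1$ via Proposition \ref{prop2.4a}(vi), then chase the $T^{\text{ss}}$-invariant part of the vanishing-cycle sequence \eqref{eq2.1a} using the $N$-string centering of $H^n_{\lm}$ and purity of $H^{n+1}_{\pha}$ (the paper simply outsources the Picard--Lefschetz implication to Corollary \ref{cor2.4a}, whose proof is the same string argument you spell out). One small bookkeeping note: the bound $q\leq n$ for a nonzero $V_{f,0}^{1,q}$ comes from the general fact $0\leq p,q\leq n$ on $V_f$ (proof of Proposition \ref{prop1.2}(i)), not from the strings being centered at weight $n+1$ with length $\leq n-1$, which only bounds the weight by $2n$.
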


\begin{proof}
Use Prop. \ref{prop2.4a}(vi) to replace the left-hand side by $\sfm>1$. This implies Picard--Lefschetz monodromy 	by Cor. \ref{cor2.4a}, and $H^{1,n}_{\pha}=\text{im}\{H^n_{\van}\to H^{n+1}(X_0)\}^{1,n}=\{0\}$ since it would otherwise contribute $\{1\}$ to $|\sigma_f|$ (in view of the trivial action of $T^{\text{ss}}$).

For the converse, we are assuming $\sfm\geq 1$ and trying to show $\sfm >1$. If $\{1\}\in |\sigma_f|$, then $((H^n_{\van})^{1,r})^{T^{\text{ss}}}\neq \{0\}$ for some $r$, and is in the image of $((H^n_{\lm})^{1,r})^{T^{\text{ss}}}$ unless $r=n$ and $H^{1,n}_{\pha}\neq \{0\}$ (ruled out by assumption). But for $\xi\in ((H^n_{\lm})^{1,r})^{T^{\text{ss}}}$ with $\can(\xi)\neq 0$, $N\xi$ is nonzero in $(H^n_{\lm})^{0,r-1}$ (also ruled out by assumption).
\end{proof}

This, of course, begs the question as to when we have $H^{1,n}_{\pha}=\{0\}$. For $\cx$ a family of curves ($n=1$), this is just irreducibility of $X_0$.  For $n>1$, we have the following:

\begin{prop} \label{prop2.4b}
Let $\cx\to \disc$ be a proper family of $p_g >0$ $n$-folds with smooth total space. Suppose the singular fiber $X_0$ has a single isolated log-canonical singularity $p$, which is not contained in the base locus of $|K_{\cx}|$.  Then $H^{1,n}_{\pha}=\{0\}$.
\end{prop}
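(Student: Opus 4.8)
The plan is to pin down where the spectral pair $[(1,n+1)]$ of $V_f$ can land under the connecting map $\delta$ of \eqref{eq2.1a}, and then to use a global top-form nondegenerate at $p$ to show it cannot reach $H^{n+1}_{\pha}$.  First the reductions.  Since $(X_0,p)$ is log-canonical, $\sfm\geq 1$ (Prop.\ \ref{prop2.4a}(v)); if $\sfm>1$ then $[(1,n+1)]\notin\tilde\sigma_f$, so $V_{f,0}^{1,n}=\{0\}$ and $H^{1,n}_{\pha}=\delta(V_{f,0}^{1,n})=\{0\}$ exactly as in the proof of Corollary \ref{cor2.4b}.  So assume $\sfm=1$.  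As $X_0$ is du Bois, $|\sigma_f|\subset[1,n]$ (Prop.\ \ref{prop2.1b}), forcing $\gr_F^n V_f=F^n V_f=V_{f,0}^{n,\bullet}$ (so $T^{\mathrm{ss}}$ acts trivially there), with $\gr^W_{n+1}(F^n V_f)=V_{f,0}^{n,1}$; and since the minimal spectral number of an isolated hypersurface singularity occurs with multiplicity one, $\dim V_{f,0}^{n,1}=\dim V_{f,0}^{1,n}\leq 1$ (equality by the $\iota_n$-symmetry, Prop.\ \ref{prop1.2}(ii)).  If this dimension is $0$ we are done as above, so assume it equals $1$.  Finally $H^{n+1}_{\pha}$ is pure of weight $n+1$ (Lemma \ref{lem2.1}, Thm.\ \ref{th-I2}(i)), so $\dim H^{1,n}_{\pha}=\dim H^{n,1}_{\pha}=\dim\gr_F^n H^{n+1}_{\pha}$; applying the exact functor $\gr_F^n\gr^W_{n+1}$ to \eqref{eq2.1a} identifies this last with $\operatorname{coker}\{\can\colon (H^n_{\lm}(X_t))^{n,1}\to V_{f,0}^{n,1}\}$.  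It therefore suffices to produce a single class in $F^n H^n_{\lm}(X_t)$ whose $\can$-image has nonzero component in $V_{f,0}^{n,1}$.

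For this I would use a section $\omega\in H^0(\cx,K_{\cx})$ with $\omega(p)\neq 0$, which exists by hypothesis (and forces $p_g>0$).  For $t\neq 0$ the Poincar\'e residue $s_\omega(t):=\mathrm{Res}_{X_t}(\tfrac{\omega}{f-t})$ is a global holomorphic $n$-form on the smooth fibre $X_t$, hence a nonzero section of the lowest Hodge bundle of $\{H^n(X_t)\}_{t\neq 0}$; being of moderate growth it extends across $t=0$ to a section of the canonical extension, with value $s_\omega(0)\in F^n H^n_{\lm}(X_t)$.  Since $\can$ on $H^n$ is induced by the flat canonical morphism $\psi_f\QQ_{\cx}\to\phi_f\QQ_{\cx}$, it commutes with the canonical extension and, fibrewise, with restriction to the Milnor fibre at $p$; so $\can(s_\omega(\cdot))$ is the section $[\mathrm{Res}_{\mathfrak F_t}(\tfrac{\omega}{f-t})]$ of $\mathcal F^n_e\cv_f$, computed near $p$, where, writing $\omega=u\,d\underline x$ with $u\in\co_{\cx,p}^{\times}$, it equals $[\mathrm{Res}_{\mathfrak F_t}(\tfrac{u\,d\underline x}{f-t})]$.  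Its value at $0$ is $\can(s_\omega(0))\in F^n V_f=V_{f,0}^{n,\bullet}$.

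The heart of the matter is to show $\can(s_\omega(0))$ has nonzero image in $\gr^W_{n+1}(F^n V_f)=V_{f,0}^{n,1}$; I would establish this with the Brieskorn-lattice / $\cv$-filtration description of $(V_f,F^\bullet)$ and $N$ from $\S$\ref{S5.2} (Thms.\ \ref{t5.2a}, \ref{t5.2b}) together with $\beta_f=\sfm=1$ (Prop.\ \ref{prop2.4a}(i)).  Write $u=u(p)+m$, $m\in\mathfrak m_{\cx,p}$.  Because $\sfm=1$ has multiplicity one and is realized by $d\underline x$, the class $[\mathrm{Res}_{\mathfrak F_t}(\tfrac{d\underline x}{f-t})]_{t=0}$ is a nonzero generator of the one-dimensional socle of $F^n V_f$; since it lies in $F^n V_f$ its spectral number is $n$, and (exactly as $\eta_{\uo}$ has weight $n+1$ in the quasi-homogeneous model, cf.\ Cor.\ \ref{cor2.1a}) it spans $V_{f,0}^{n,1}$ — precisely in the subcase where the multiplicity-one spectral value $1$ occupies $V_{f,0}^{1,n}$; in the complementary subcase $V_{f,0}^{1,n}=\{0\}$ and we are already done.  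On the other hand $[m\,d\underline x]$ has spectral number $>1$, so its contribution to the value at $0$ lies in $F^n V_f$ of weight $>n+1$, or vanishes.  Hence the $\gr^W_{n+1}$-component of $\can(s_\omega(0))$ is $u(p)$ times that generator, which is nonzero; the cokernel above vanishes, and $H^{1,n}_{\pha}=\{0\}$.

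The step I expect to be the main obstacle is this last one: making rigorous that $\can(s_\omega(0))$ meets weight $n+1$.  One has to keep the Hodge filtration and the $\cv$/monodromy-weight filtration on $\cv_f$ carefully apart, exploit that $\can(s_\omega(t))\in F^n V_f$ for $t\neq 0$ forces the limit into $V_{f,0}^{n,\bullet}$ (so no stray $\lambda\neq 0$ or lower-$p$ terms appear), and use the simplicity of $\sfm$ to confine the tail $[m\,d\underline x]$ strictly above weight $n+1$.
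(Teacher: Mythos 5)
Your reductions in the first paragraph are all valid: purity and level of $H^{n+1}_{\pha}$ (Thm.~\ref{th-I2}(i)), the $\iota_n$-symmetry of Prop.~\ref{prop1.2}(ii), and strictness of MHS morphisms do reduce the Proposition to surjectivity of $(H^n_{\lm}(X_t))^{n,1}\to V_{f,0}^{n,1}$ with a target of dimension $\leq 1$, and hitting it with the limit of $\mathrm{Res}_{X_t}(\omega/(f-t))$ for $\omega\in H^0(\cx,K_{\cx})$ with $\omega(p)\neq 0$ is a legitimate use of the base-locus hypothesis. This is a genuinely different route from the paper's, which contains no spectral or Brieskorn-lattice input at all: there one passes to a log resolution, reduces $H^{1,n}_{\pha}=\{0\}$ (via the weight--monodromy spectral sequence and Serre duality) to surjectivity of the residue map $H^0(K_{\tilde{X}_0}(E))\to H^0(K_E)$, rewrites this by adjunction and log-canonicity as restriction of sections of an effective exceptional divisor, and kills the target with the Hodge Index Theorem on a general surface section; the base-locus hypothesis enters only to choose an effective representative of $K_{\cx}$ missing $p$.

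The genuine gap is exactly where you flag it, and it is the mathematical heart of your approach rather than a loose end. Everything rests on the assertion that $\gr_{\cv}^{\sfm-1}\Omega_f$ is one-dimensional, is generated by the class of $d\ux$, and is annihilated by $\mathfrak{m}=\mathfrak{m}_{\CC^{n+1},\uo}$ --- equivalently, that the minimal spectral number has multiplicity one in the spectrum \emph{and} that the bottom $\cv$-graded piece of the Brieskorn lattice is exactly $\co_{n+1}/\mathfrak{m}$, so that $[u\,d\ux]\mapsto u(\uo)\cdot[d\ux]$ there. Neither statement is proved in the paper or in your argument, and neither is a formal consequence of Theorems~\ref{t5.2a}--\ref{t5.2b} or Prop.~\ref{prop2.4a}. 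Both can be checked in the quasi-homogeneous case (Gorenstein duality puts the socle, hence the unique minimal spectral class, in the top graded piece) and in the convenient nondegenerate case (via $\cn^\bullet=\cv^\bullet$ and strict positivity of the normals of the compact facets of $\NP$, which forces $\h(\um+\underline{1})>\h(\underline{1})$ for $\um\neq\uo$); in general they are theorems about the minimal exponent due to M.~Saito, but a proof or precise citation is required, and it is not enough to cite multiplier-ideal statements such as Prop.~\ref{prop2.4a}(iii): your case is precisely $\sfm=1$, the integer boundary value at which the dictionary between jumping numbers and the $V$-filtration acquires its well-known corrections ($\mathcal{J}(1\cdot f)=(f)\cdot\co$, not $\mathfrak{m}$), so the microlocal versions are what is actually needed. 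Note also that without the $\mathfrak{m}$-annihilation statement, multiplicity one alone does not suffice: if $[m\,d\ux]=c_m[d\ux]$ with $c_m$ a nonzero functional on $\mathfrak{m}$, the specific unit $u$ coming from $\omega$ could land in its kernel.

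Two smaller points. First, the phrase ``its contribution to the value at $0$ lies in $F^nV_f$ of weight $>n+1$'' is not quite right: if $[m\,d\ux]\in\cv^{>0}\cb$ then, since $\cv^{>0}\cap\ch_e=t\ch_e$, its image in $V_f=\ch_e/t\ch_e$ vanishes outright --- there is no higher-weight alternative inside the one-dimensional $F^nV_f$. Second, you implicitly use that the minimal period exponent is \emph{attained} by $d\ux$ itself (so that $[d\ux]\neq 0$ in $\gr_\cv^{\sfm-1}\Omega_f$); this is part of the same unproved package, since the paper's discussion preceding Prop.~\ref{prop2.4a} only bounds $\beta_f$ using the minimum over all $\omega$.
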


\begin{rem}
\begin{enumerate}[label=(\roman*)]
\item The hypothesis on $|K_{\cx}|$ holds for example if the $X_{t\neq 0}$ are $K$-trivial, or if the $X_t$ are complete intersections of CY or general type in $\PP^N$.
\item The necessity of $p_g >0$ is demonstrated by taking $\cx$ a family of cubic surfaces, with $(X_0,p)$ of type $\tilde{E}_6$.
\item Corollaries \ref{cor2.4a}-\ref{cor2.4b} have obvious extensions to finitely many singularities.  Prop. \ref{prop2.4b} does not: for instance, if $\cx$ is a family of $K3$ surfaces, with two $\tilde{E}_6$ singularities on $X_0$.  (In this case, $\tilde{X}_0$ is an elliptic ruled surface and the semistable reduction has singular fiber of Kulikov type II.)
\end{enumerate}
\end{rem}

\begin{proof}[Proof of Prop. \ref{prop2.4b}]
This proceeds in three steps.  Let $\tilde{X}_0 \overset{\pi}{\to}X_0$ be a log-resolution, with exceptional divisor $E=\cup_i E_i$, restricting to an isomorphism from $\tilde{X}_0\setminus E$ to $X_0\setminus \{p\}$. Since $n\geq 2$, $p$ is a normal point, whence (via Zariski's Main Theorem) $E$ is connected.

$\underline{\text{Step 1}}$: \emph{Reduction from $H^{1,n}_{\pha}=\{0\}$ to surjectivity of $\mathrm{Res:}\,H^0(K_{\tilde{X}_0}(E))\to H^0(K_E)$}.  We may assume that $\pi$ arises as the restriction to $X_0$ of a semistable reduction $\cy\to\cx$ with exceptional divisor $\cE$.  The specialization map $\sp:H^{n+1}(X_0)^{1,n}\to H^{n+1}_{\lm}(X_t)^{1,n}$ factors as
\begin{equation}
H^{n+1}(X_0)^{1,n}\overset{\cong}{\to}H^{1,n}(\tilde{X}_0)\hookrightarrow \underset{H^{n+1}(Y_0)^{1,n}}{\underbrace{H^{1,n}(\tilde{X}_0)\oplus H^{1,n}(\tilde{\cE})}} 
\to \underset{H^{n+1}_{\lm}(X_t)^{1,n}}{\underbrace{\frac{H^{1,n}(\tilde{X}_0)\oplus [H^{1,n}(\tilde{\cE})]}{\text{im}(H^{0,n-1}(\tilde{E}))}}}
\end{equation}	
by purity of $H^{n+1}(X_0)$ and the weight-monodromy spectral sequence for the SSD (semistable degeneration) $\cy\to \disc$. (Here $[H^{1,n}(\tilde{\cE})]$ denotes the quotient by the image of $H^{0,n-1}(\cE^{(1)})$, with $\cE^{(1)}=\amalg_{i<j}\cE_i \cap \cE_j$.) So $\sp$ is injective provided $H^{0,n-1}(\tilde{E})\overset{\gy}{\to}H^{1,n}(\tilde{X}_0)$, or conjugate-dually $H^{0,n-1}(\tilde{X}_0)\to H^{0,n-1}(\tilde{E})$, is zero. Since the NCD $E$ is du Bois, this translates to the vanishing of the restriction map $H^{n-1}(\co_{\tilde{X}_0})\to H^{n-1}(\co_E)$, hence to injectivity of $H^{n-1}(\co_E)\to H^n(\co_{\tilde{X}_0}(-E))$, and Serre-dually to surjectivity of $H^0(K_{\tilde{X}_0}(E))\to H^0(K_E)$.

$\underline{\text{Step 2}}$: \emph{Adjunction}.  We may also assume that $\pi$ is the restriction of a log-resolution $\cy' \overset{\beta}{\to}\cx$ of $(\cx,X_0)$ with exceptional divisor $\cE'$.  Then writing $\tilde{X}_0=(\beta^* t)-\sum_i a_i \cE_i '$ and $K_{\cy'}=\beta^* K_{\cx} + \sum_i b_i \cE_i '$ (with $a_i,b_i\in \mathbb{N}$), we have $K_{\cy'} + \tilde{X}_0 \equiv \beta^* K_{\cx} + \sum_i (b_i -a_i) \cE_i '$, with $\beta^* K_{\cx}$ an effective divisor with support not meeting $\tilde{X}_0 \cup \cE'$.  Writing $E_i=\tilde{X}_0 \cap \cE_i '$, adjunction yields
\begin{equation}
K_{\tilde{X}_0}(E) = (K_{\cy'} + \tilde{X}_0 + \cE')|_{\tilde{X}_0} = \sum_i (b_i - a_i +1)E_i =:\sum_i \alpha_i E_i. 
\end{equation}
Clearly the $\alpha_i \in \ZZ$, and since $\tilde{X}_0$ is log-canonical, they are $\geq 0$. Writing $D:=\sum_i \alpha_i E_i (\geq 0)$, adjunction again gives $K_E \equiv \co_E (E\cdot D)$.  So $\mathrm{Res}$ identifies with the restriction map 
\begin{equation} \label{eq*prop2.4b}
H^0(\tilde{X}_0,\co(D))\to H^0(E,\co(E\cdot D)).
\end{equation}

$\underline{\text{Step 3}}$: \emph{Hodge Index Theorem}. Let $\tilde{S}\subset \tilde{X}_0$ be a general $(n-2)$-fold hypersurface section (i.e.~of dimension 2).  Clearly the surface $S:=\pi(\tilde{S})$ contains $p$, and we write $C_j:=\tilde{S}\cap E_j$.  Consider an ample divisor $A$ on $S$, and note that $\tilde{A}:=\pi^* A$ has $\tilde{A}^2 = A^2 >0$. For any effective divisor on $\tilde{S}$ of the form $\mathscr{D}=\sum_j \gamma_j C_j$ ($\gamma_j \geq 0$), we have $\mathscr{D}\cdot \tilde{A}=0$; so by the Hodge Index Theorem, $\mathscr{D}\cdot \mathscr{D}<0$ (unless all $\gamma_j=0$). It follows that $C_i\cdot\mathscr{D}<0$ for some $i$ (necessarily with $\gamma_i \neq 0$); therefore any $g\in H^0(C,\co(C\cdot \mathscr{D}))$ has $g|_{C_i}\equiv 0$, hence actually belongs to $H^0(C,\co(C\cdot \mathscr{D}'))$ where $\mathscr{D}'=\mathscr{D}-C_i$.  Continuing in this vein we find that $g\in H^0(C,\co_C)\cong \CC$.

If $G\in H^0(E,\co(E\cdot D))$, then applying this to $g:=G|_C$ shows that $G\in H^0(E,\co_E)$ (since $\tilde{S}$ was general). In other words, $\mathrm{RHS}\eqref{eq*prop2.4b}\subseteq H^0(E,\co_E)$ is at most $1$-dimensional (recall $E$ is connected), and $(H^0(\tilde{X}_0,\co_{\tilde{X}_0})\subseteq)\,H^0(\tilde{X}_0,\co_{\tilde{X}_0}(D))$ surjects onto it.
\end{proof}

\section{Isolated hypersurface singularities: spectral combinatorics\label{S2.5}}

In this section we explore two ways of generalizing the constructions of $\S$\ref{S2.2}: a geometric approach based on toric geometry which replaces Proposition \ref{prop2.2a}; and an algebraic one, related to the $V$-filtration on a Brieskorn lattice, which replaces Theorem \ref{T2.2}. We shall assume throughout that the singular fiber $X_0$ of $f:\cx\to\disc$ has a unique\footnote{The results that follow have, once again, obvious extensions to the case of multiple isolated singularities.} singularity at $p$, which (by abuse of notation) locally identifies with a (i) \emph{convenient} and (ii) \emph{nondegenerate} polynomial map $f=\sum_{\um\in \ZZ^{n+1}_{\geq 0}} a_{\um} \uz^{\um}:\, \CC^{n+1}\to \CC$.  That is, defining $\mathfrak{M}(f):=\{\um\in\ZZ_{\geq 0}^{n+1}\mid a_{\um}\neq 0\}$, we require that:
\begin{enumerate}[label=\bf(\roman*),leftmargin=0.8cm]
\item the \emph{Newton polytope} $\NP$ of $f$, given by the convex hull of $\bigcup_{\um\in\mathfrak{M}(f)} (\um + \RR_{\geq 0}^{n+1})\subseteq \RR^{n+1}_{\geq 0}$, have no noncompact faces not contained in the coordinate hyperplanes; and
\item for each compact $r$-face $\sigma$ of $\partial\NP$, the compactification of $\{f=0\}$ in $\PP_{\NP}$ have smooth (and reduced) intersection\footnote{described by the facet polynomial $(0=)\sum_{\um\in\sigma\cap\ZZ^{n+1}}a_{\um}\uz^{\um}$.} with the open torus-orbit $(\CC^*)^r$ in $\PP_{\sigma}$.
\end{enumerate}

\begin{rem}\label{rem-con}
We note that {\bf (i)} is an essentially vacuous constraint, since given an $f$ \emph{which already defines an isolated singularity}, we may arrange {\bf (i)} (without affecting the local analytic isomorphism class of $f$) by adding a Fermat polynomial $\sum z_i^{M_i}$ with $M_i \gg 0$.
\end{rem}

\subsubsection*{Notation}	

Writing $\NP^c := \overline{\RR_{\geq 0}^{n+1}\setminus \NP}^{\text{an}}$ and $\Gamma:=\NP\cap \NP^c$, let $\h:\,\RR^{n+1}_{\geq 0}\to \RR_{\geq 0}$ be the function defined by $\h(\Gamma)\equiv 1$ and $\h(r\underline{x})=r \h(\underline{x})$.  If $\Gamma=\cup \Gamma_i$ is the decomposition into common facets of $\NP$ and $\NP^c$, $\NP_i$ is the convex hull of $\{\uo\}$ and $\Gamma_i$, and $C_i$ the cone on $\{\uo\}$ through $\Gamma_i$, then $\h$ is linear on each $C_i$ and $\NP^c =\cup_i \NP_i$. For each $I=\{i_0,\ldots,i_{\ell}\}$ we write $\Gamma_I = \cap_{i\in I}\Gamma_i$ [resp. $\NP_I$, $C_I$]; and set $\Gamma^{[\ell]}:= \amalg_{|I|=\ell+1} \Gamma_I$ [resp. $\NP^{[\ell]}$, etc.]. We shall have frequent use for counting integer points $\Lambda^*(\sigma):=|\text{int}(\sigma)\cap \ZZ^{n+1}|$ in the interior of a polytope, and for refining these counts by $\Lambda_{\lambda}^*(\sigma):=|\text{int}(\sigma)\cap\ZZ^{n+1}\cap \h^{-1}(\lambda+\ZZ)|$, where $\lambda\in [0,1)\cap \QQ$. 

Finally, for any face $\tau$ of $\Gamma$, we write $\NP_{\tau}$ [resp. $C_{\tau}$] for the convex hull of $\{\uo\}$ and $\tau$ [resp. cone on $\{\uo\}$ through $\tau$], $d_{\tau}$ for the dimension of $\tau$, and $k_{\tau}$ for the dimension of the minimal \emph{coordinate} plane containing $\tau$.

\subsection{Toric geometry approach} \label{S5.1}

First assume that $f$ is \emph{simple}, i.e.
\begin{enumerate}[label=\bf(\roman*),leftmargin=0.9cm, resume]
\item all cones on $\NP$ at vertices of $\NP$ are simplicial.
\end{enumerate}
In this case we construct, after Steenbrink \cite{steenbrinkvan}, a diagram
\[ 
\xymatrix{\cy \ar [r] \ar [rd]_g & \fx \ar [r] \ar [d] & \cx \ar [d]^{f} & \tilde{\cx} \ar [l] \ar [ld]^{\tilde{f}} \\ & \disc \ar [r]^{(\cdot)^d} & \disc} 
\]
in which $\fx$ is the base-change (singular at $p$), while $\tilde{\cx}$, $\cy$, and irreducible components of $\tilde{f}^{-1}(0)$ and $g^{-1}(0)$ are quasi-smooth with quasi-normal crossings.\footnote{This simply means that the intersections are locally those of the coordinate hypersurfaces in the toric variety of a simplicial cone.} 

First, let $\tilde{\cx}$ be the ``blowup'' of $\cx$ which locally replaces $\CC^{n+1}=\PP_{\RR_{\geq 0}^{n+1}}$ by $\PP_{\NP}$, and write $(\tilde{f})=\tilde{X}_0+\sum d_i \mathscr{D}_i$. Here $\mathscr{D}_i \cong \PP_{\Gamma_i}$ and $\mathrm{lcm}(d_i)=:d$ is the lowest common denominator of $\h(\ZZ_{\geq 0}^{n+1})$. The quasi-SSD $(\cy,g)$ is obtained by normalizing the base-change of $\tilde{\cx}$, or (equivalently) by locally compactifying $f(\uz)=t^d$ in $\PP_{\NNP}$, where $\NNP\subset \RR^{n+2}_{\geq 0}$ denotes the convex hull of $\{0\}\times \NP$ and the ray $\theta:=(d+\RR_{\geq 0})\times \{\uo\}$.\footnote{We are regarding $(t,z_1,\ldots,z_{n+1})$ as local toric coordinates.} The compact faces of $\NNP$ are (together with integral lattices) isomorphic to the $\NP_i$, and so the components $\cE_i$ of $(g)=\tilde{X}_0 + \sum \cE_i$ are hypersurfaces in $\PP_{\NP_i}$. An action of $\mu_d$ on $\cy$ is given by $t\mapsto \zeta_d t$, with quotient $\tilde{\cx}$; equivalently, we can map $\cy\twoheadrightarrow \tilde{\cx}$ by using the natural projection $(\PP_{\NNP}\setminus \PP_{\theta}) \twoheadrightarrow \PP_{\NP}$. Its restrictions to $\PP_{\NP_i} \twoheadrightarrow \PP_{\Gamma_i}$ present each $\cE_i$ as a cyclic $d_i$-cover of $\PP_{\Gamma_i}$, branched along the hypersurface $E_i :=\tilde{X}_0 \cap \cE_i \subset \PP_{\Gamma_i}$.  Write as above $\cE_I := \cap_{i\in I}\cE_i$ and $\cE^{[\ell]}:= \amalg_{|I|=\ell+1}\cE_I$ (and similarly for $E$).

Next, define a cohomological motive ${\mathbf{M}}^{\bullet}_{\van}$ (say, in Levine's category $D^b_{\text{mot}}(\text{Sm}_{\CC})$ \cite{Lev}) by
\begin{multline} \label{eq2.5.1}
\mathbf{M}^i_{\van} := \left( \coprod_{k\geq \max\{1,-i\}} E^{[2k+i-1]}(-k)[-2k]\right) \amalg \left(\coprod_{k\geq \max\{0,-i\}}\cE^{[2k+i]}(-k)[-2k]\right)	\\ \left( \amalg \text{ pt.}\text{ , if }i=-1\right) ,\mspace{200mu}
\end{multline}
with alternating sums of Gysin and restriction morphisms between them.\footnote{We shall have no need to specify signs, which can be extracted from \cite{steenbrinkvan}. Note that \eqref{eq2.5.1} is closely related to the \emph{motivic Milnor fiber} of Denef and Loeser \cite{DL}.} Since the weight monodromy spectral sequence holds for a quasi-SSD,\footnote{This is shown in \cite{steenbrinkvan}; alternatively, one may argue as in \cite[$\S$8.3]{KL1}.} the spectral sequence $\mathbb{E}_1^{i,j}:= H^j(\mathbf{M}_{\van}^i)$ (with $d_1$ induced by the above morphisms) converges at $\mathbb{E}_2$.  This yields $\mathbb{E}_2^{i,j}\cong\gr^W_j H^{i+j}_{\van}$, so that $\mathbf{M}_{\van}^{\bullet}$ replaces the ``tail'' from $\S$\ref{S2.2}. Since $H^{i+j}_{\van}=\{0\}$ for $i+j\neq n$, the Hodge-Deligne numbers satisfy \[ h^{p,q}_{\lambda}(H^n_{\van}) = h^{p,q}_{\lambda}(\mathbb{E}_2^{n-p-q,p+q}) = \sum_{i=-n}^n (-1)^{i-(n-p-q)} h_{\lambda}^{p,q}(\mathbb{E}_1^{i,p+q})\] which yields the formula
\begin{multline} \label{eq2.5.2}
h_{\lambda}^{p,q}(H^n_{\van}(X_t))=\sum_{i=-n}^n (-1)^{n+p+q+i}\left\{\sum_{k\geq{1,-i}} h_{\lambda}^{p-k,q-k}(E^{[i+2k-1]})+\sum_{k\geq 0,-i} h_{\lambda}^{p-k,q-k}(\cE^{[i+2k]})\right\}\\ \left(\; +\;(-1)^{n-1}\text{ if }p=q=\lambda=0\;\right).\mspace{100mu}
\end{multline}
Here $h^{a,b}_0 (E^{[\ell]}): =h^{a,b}(E^{[\ell]})$ and $h^{a,b}_{\lambda}(E^{[\ell]}):=0$ for $\lambda\neq 0$, while $h^{a,b}_{\lambda}(\cE^{[\ell]})$ is the $e^{2\pi\sqrt{-1}\lambda}$-eigenspace of $t\mapsto \zeta_d t$. Danilov \cite{Da} supplemented \eqref{eq2.5.2} with combinatorial formulas for the Hodge-Deligne numbers of the toric hypersurfaces $E_I\subset \PP_{\Gamma_I}$ and $\cE_I \subset \PP_{\NP_I}$, which we may write as
\begin{equation}\label{eq2.5.3}
h^{a,b}(E_I)=\left\{ \begin{array}{cl} 0,&a\neq b,\,n-|I|-b \\ \sum_{\tau\subseteq \Gamma _I} {(-1)}^{d_{\tau}+b}\binom{d_{\tau}}{a}, &a=b<\tfrac{1}{2}(n-|I|) \\  \sum_{\tau\subseteq \Gamma _I}{(-1)}^{d_{\tau}+b+1}\binom{d_{\tau}}{a+1}, &a=b>\tfrac{1}{2}(n-|I|) \\ \begin{matrix} \sum_{\tau\subseteq \Gamma _I} {(-1)}^{b+1}\left\{ {(-1)}^{d_{\tau}} \binom{d_{\tau}}{a+1} + \right. \mspace{30mu}{} \\ \mspace{60mu}\left.\sum_{\ell\geq 0}{(-1)}^{\ell} \binom{d_{\tau}+1}{a+\ell+1}\Lambda^*(\ell \tau)\right\} , \end{matrix} &a=b=\tfrac{1}{2}(n-|I|) \\ \sum_{\tau\subseteq \Gamma _I} \sum_{\ell\geq 0}{(-1)}^{b+\ell+1} \binom{d_{\tau}+1}{a+\ell+1}\Lambda^*(\ell\tau), &a=n-|I|-b>b\end{array} \right.
\end{equation}
\begin{equation}\label{eq2.5.5}
h^{a,b}_0 (\cE_I)=h^{a,b}(\PP_{\Gamma _I})= \left\{ \begin{array}{cl} 0, & a\neq b \\ \sum_{\tau\subseteq \Gamma _I} {(-1)}^{d_{\tau}+b}\binom{d_{\tau}}{a},& a=b \end{array} \right.
\end{equation}
and
\begin{equation}\label{eq2.5.6}
h^{a,b}_{\lambda\neq 0}(\cE_I)= \left\{ \begin{matrix} 0\;\;\mbox{ unless }a+b = n-|I|+1,\mbox{ in which case:} \\ \sum_{\tau \subseteq \Gamma _I} \sum_{\ell \geq 0} {(-1)}^{\ell+b} \binom{d_{\tau}+1}{a+\ell+1} \left\{ \Lambda_{1-\lambda}^*((\ell+1)\NP_{\tau})-\Lambda_{1-\lambda}^*(\ell \NP_{\tau}) \right\} . \end{matrix} \right. 
\end{equation}
Before proceeding to some general consequences of \eqref{eq2.5.2}-\eqref{eq2.5.6}, we illustrate their power when $n=1$.
\begin{example}\label{ex5.1}
After analytic change of coordinates via $z_1= x + y^4$, $z_2=y + x^4$, the Newton polytope of $f_0 = z_1^4 z_2+ z_1 z_2^4 + z_1^2 z_2^2$ is
\[ \includegraphics[scale=0.7]{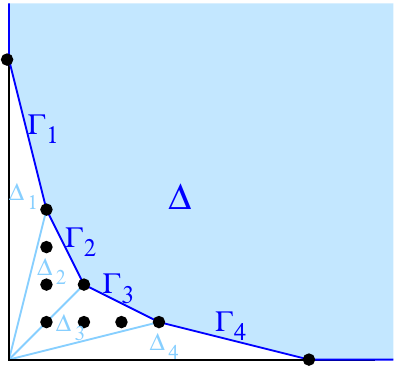} \]
so we get the same weighted spectrum by starting with
\[ f= z_1^8 + z_2^8 + z_1^4 z_2 + z_1 z_2^4 + z_1^2 z_2^2 . \]
The formulas yield $h_0^{1,1}(H^1_{\van})=h^{0,0}(E^{[0]})+h_0^{0,0}(\cE^{[1]})-h_0^{1,1}(\cE^{[0]})=4+3-4=3$, $h^{1,1}_{\frac{1}{2}}(H^1_{\van})=h^{0,0}_{\frac{1}{2}}(H^1_{\van})=\sum_{|I|=2}\Lambda^*_{\frac{1}{2}}(\NP_I) =1$, and $h^{0,1}_{\lambda}(H^1_{\van})=h^{1,0}_{1-\lambda}(H^1_{\van})=h^{1,0}_{1-\lambda}(\cE^{[0]})=\sum_{j=1}^4 \Lambda_{\lambda}^*(\NP_j)$ ($=2$ for $\lambda=\tfrac{2}{3},\tfrac{5}{6}$, and $0$ otherwise) producing a weighted spectrum
\[ \tilde{\sigma}_f = \underset{V_f^{0,0}}{\underbrace{[(\tfrac{1}{2},0)]}}+\underset{V_f^{0,1}}{\underbrace{2[(\tfrac{2}{3},1)]+2[(\tfrac{5}{6},1)]}}+\underset{V_f^{1,1}}{\underbrace{3[(1,2)]+[(\tfrac{3}{2},2)]}}+\underset{V_f^{1,0}}{\underbrace{2[(\tfrac{7}{6},1)]+2[(\tfrac{4}{3},1)]}} \]
with contributions to the $(p,q)$ pieces of $V_f=H^1_{\van}(X_t)$ as shown.
\end{example}

The first main result provides sufficient conditions for $k$-log-canonicity or $k$-rationality\footnote{take $c=k+1$ in Theorem \ref{thm2.5A}} of isolated hypersurface singularities in terms of the Newton polytope:

\begin{thm}\label{thm2.5A}
Let $c\in\ZZ_{>0}$, and write $\underline{1}=(1,\ldots,1)$. Under assumptions {\bf (i)}-{\bf (iii)}, we have $\sigma_f^{\text{min}}\geq c$ if $\underline{1}\in c\NP$ and $\sigma_f^{\text{min}}>c$ if $\underline{1}\in \textit{int}(c\NP)$.
\end{thm}
\begin{proof}
Rather than using \eqref{eq2.5.2}, observe that $\mathrm{Gr}^W_{p+q}H^n_{\text{van}}$ is the cohomology of the middle term of $\mathbb{E}_1^{n-p-q-1,p+q}\to \mathbb{E}_1^{n-p-q,p+q}\to \mathbb{E}_1^{n-p-q+1,p+q}$, which is a subquotient of
\begin{align*}
&\bigoplus_{k\geq \max\{0,p+q-n\}}\mathrm{coker}\left\{\mathrm{Gy}\colon H^{p+q-2k-2}(E^{[2k+n-p-q]})(-1)\to H^{p+q-2k}(\mathcal{E}^{[2k+n-p-q]})\right\}(-k)\\
\oplus &\bigoplus_{k\geq \max\{1,p+q-n\}}\ker\left\{\mathrm{Gy}\colon H^{p+q-2k}(E^{[2k+n-p-q-1]})\to H^{p+q-2k+2}(\mathcal{E}^{[2k+n-p-q-1]})(1)\right\}(-k).
\end{align*} 
Taking $h^{p,q}_{\lambda}$ of this for $\lambda\neq 0$ gives
\begin{equation}\label{e5*new}
\textstyle\sum_{k\geq 0,p+q-n}\sum_{|I|=2k+n-p-q+1}h_{\lambda}^{p-k,q-k}(\mathcal{E}_I)
\end{equation}
while $h^{p,q}_0$ yields
\begin{equation}\label{e5**new}
\textstyle\sum_{k\geq 1,p+q-n}\sum_{|I|=2k+n-p-q}\left\{\Lambda^*\text{-terms in }h^{p-k,q-k}(E_I)\right\}
\end{equation} 
as the terms of the type in \eqref{eq2.5.5} cancel with those in \eqref{eq2.5.3} via the Gysin maps.  For $\sigma_f^{\text{min}}>c$, we want \eqref{e5*new} and \eqref{e5**new} to vanish for $p\geq n-c+1$. For $\sigma_f^{\text{min}}\geq c$, we want \eqref{e5*new} to vanish for $p\geq n-c+1$ and \eqref{e5**new} to vanish for $p\geq n-c+2$.

If $\underline{1}\in \textit{int}(c\NP)$, then $\ZZ_{>0}^{n+1}\subset \text{int}(c\NP)$ and $\Lambda^*(\ell\tau)=0=\Lambda^*(\ell\NP_{\tau})$ for all $\ell\leq c$. 
When $p\geq n-c+1$, this forces \eqref{e5*new} and \eqref{e5**new} to vanish. 
For instance, the $\Lambda^*(\ell\tau)$ terms in \eqref{eq2.5.3} have a factor of $\binom{d_{\tau}+1}{p-k+\ell+1}$, with $\tau\subseteq \Gamma_I \implies d_{\tau}\leq n+1-|I|=p+q-2k+1$.  
For that factor to be nonzero, we must have $p-k+\ell\leq d_{\tau}$; but then 
$$\ell\leq d_{\tau}-p+k\leq q+1-k\leq q+1-(p+q-n)=n+1-p\leq n+1-(n-c+1)=c,$$
forcing $\eqref{e5**new}=0$. 
The $\Lambda^*((\ell+1)\NP_{\tau})$ terms in \eqref{eq2.5.6} have the same factor but with $d_{\tau}\leq p+q-2k$, forcing $\ell+1\leq c$ and $\eqref{e5*new}=0$.

If $\underline{1}\in c\NP$, then $\Lambda^*(\ell\NP_{\tau})=0$ for $\ell\leq c$ and $\eqref{e5*new}=0$; but we know $\Lambda^*(\ell\tau)=0$ only for $\ell\leq c-1$. Forcing $\ell\leq c-1$ (and $\eqref{e5**new}=0$) requires taking $p\geq n-c+2$.
\end{proof}

Now the condition that $\underline{1}\in c\NP$  or $\textit{int}(c\NP)$ is rather strong, even for $c=1$; indeed, many well-known singularities arising in moduli problems have $\sigma_f^{\text{min}}\leq 1$. In this case, one may expect $\gr_F^0 H^n_{\lm}$ to be affected by the degeneration. (We shall write in what follows $h^{a,b}_{\lm}:=h^{a,b}(H^n_{\lm}(X_t))$.)

Recalling that $h^{0,n}(X_{t\neq 0})=\sum_{j=0}^n h^{0,j}_{\lm}$, where $h^{0,j}_{\lm}=h^{0,j}_{\lm,0}+h^{0,j}_{\lm,\neq 0}$ and $h^{0,j}(X_0)=h^{0,j}_{\lm,0}$ ($0\leq j \leq n$), we have
\begin{equation}\label{eq2.5.9}
\left\{
\begin{matrix}
h^{0,j}_{\lm,\neq 0}\;\;\;\;\;(\;=h^{n-j,n}_{\lm,\neq 0}\;)&=\;h^{n-j,n}_{\van,\neq 0}&(0\leq j\leq n)
\\
h^{0,j}_{\lm, 0}\;\;\;\;\; (\;=h^{n-j,n}_{\lm, 0}\;)&=\;h^{n-j,n}_{\van, 0}&(0\leq j< n-1)
\\	
h^{0,n-1}_{\lm, 0}\;\;\;\;\;(\;=h^{1,n}_{\lm, 0}\;)&=\;h^{1,n}_{\van, 0} - h^{1,n}_{\pha}& {}
\end{matrix}
\right.
\end{equation}
since $N$-strings in $H^n_{\lm}$ starting at $(*,n)$ survive to $(0,n-*)$. The next result concerns these \emph{extremal $N$-strings}:

\begin{thm} \label{thm2.5B}
Assuming {\bf (i)}-{\bf (iii)}, we have
\begin{equation}\label{eq2.5.10}
\left\{
\begin{matrix}
h^{n-j,n}_{\van,\lambda(\neq 0)}&=&\sum_{|I|=n-j+1}\Lambda_{\lambda}^*(\NP_I)&(0\leq j\leq n)
\\
h^{n-j,n}_{\van, 0}&=&\sum_{|I|=n-j}\Lambda^*(\Gamma_I)&(1\leq j\leq n-1)
\\	
h^{n,n}_{\van, 0}&=&\sum_{|I|=n}\Lambda^*(\Gamma_I) + \sum_{|I|=n+1}1.
\end{matrix}
\right.
\end{equation}
If {\bf (iii)} fails, these formulas still hold if we replace $\{\Gamma_I\}_{|I|=a}$ \textup{[}resp. $\{\NP_I\}_{|I|=a}$\textup{]} by the set of $(n+1-a)$-dimensional faces $\gamma\subseteq \Gamma$ intersecting $\RR^{n+1}_{>0}$ \textup{[}resp. the convex hulls of $\{\uo\}$ and these $\gamma$\textup{]}.
\end{thm}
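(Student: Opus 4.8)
The plan is to read off both identities in \eqref{eq2.5.10} from the master formula \eqref{eq2.5.2} specialized to the extremal value $q=n$, feeding in Danilov's formulas \eqref{eq2.5.3}--\eqref{eq2.5.6} for the toric strata. The first thing to do is the dimension bookkeeping that collapses \eqref{eq2.5.2} at $q=n$: a summand $h^{p-k,\,n-k}_\lambda(E^{[i+2k-1]})$ (resp. $h^{p-k,\,n-k}_\lambda(\cE^{[i+2k]})$) can be nonzero only when its second Hodge index $n-k$ does not exceed the dimension of the component, and since $\dim E_J=n-|J|$, $\dim\cE_J=n+1-|J|$, this forces $k\le -i$; together with the summation ranges $k\ge\max\{1,-i\}$, $k\ge\max\{0,-i\}$ it pins $k=-i$, makes $n-k$ equal to the dimension, turns the sign $(-1)^{n+p+q+i}$ into $(-1)^{2n}=1$, and leaves the extra point of $\mathbf{M}^{-1}_{\van}$ in cohomological degree $0$, hence off the row $j=p+n\ge 1$. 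Writing $p=n-j$ and $m:=-i=k$, the surviving first index $p-m$ is $0$ (so only $m=n-j$ survives, a single term) unless $j=0$, where it equals the dimension and all $m$ survive. Thus \eqref{eq2.5.2} reduces, for $j\ge 1$, to $h^{n-j,\,n}_{\van,\lambda}=h^{0,j}_\lambda(E^{[n-j-1]})+h^{0,j}_\lambda(\cE^{[n-j]})$ (with $E^{[-1]}:=\emptyset$), and for $j=0$ to $h^{n,n}_{\van,\lambda}=\sum_{m}(-1)^{n+m}\bigl\{h^{n-m,n-m}_\lambda(E^{[m-1]})+h^{n-m,n-m}_\lambda(\cE^{[m]})\bigr\}$.

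Next I would dispatch the case $\lambda\ne 0$ (so $0\le j\le n$). The strata $E^{[\ell]}\subset\tilde X_0$ carry the trivial $\mu_d$-action, so all their $\lambda$-eigenpieces vanish; and by \eqref{eq2.5.6}, $h^{a,b}_\lambda(\cE_I)=0$ unless $a+b=n-|I|+1$, which for $(a,b)=(0,j)$ picks out exactly $|I|=n-j+1$. Hence $h^{n-j,n}_{\van,\lambda}=\sum_{|I|=n-j+1}h^{0,j}_\lambda(\cE_I)$, and it only remains to identify $h^{0,j}_\lambda(\cE_I)=\Lambda^*_\lambda(\NP_I)$ for such $I$ (where $\dim\cE_I=j$). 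This is the facet-localized form of Theorem \ref{T2.2}: the $\zeta_d$-graded piece of $H^0(\cE_I,\omega_{\cE_I})$ is spanned by Griffiths-type residues $\mathrm{Res}\bigl(\uz^{\ub}\,d\uz/Q^{\lceil\cdot\rceil}\bigr)$ on $\PP_{\NP_I}$ indexed by the relative-interior lattice points $\ub$ of $\NP_I$, with $\zeta_d$-eigenvalue governed by the $\h$-height of $\ub$; equivalently it falls out of the $\tau\subseteq\Gamma_I$, $\ell\ge 0$ double sum in \eqref{eq2.5.6} by inclusion--exclusion over the faces $\tau$ and a telescoping of the $\ell$-sum (one must track the $\lambda\leftrightarrow 1-\lambda$ flip between the two sign conventions for the $\mu_d$-action). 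This gives the first line of \eqref{eq2.5.10}, including the $j=0$ and $j=n$ endpoints, where the $E$-stratum is absent.

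For $\lambda=0$ and $1\le j\le n-1$: the $\cE$-contribution is $h^{0,j}_0(\cE_I)=h^{0,j}(\PP_{\Gamma_I})=0$ (toric variety, $j>0$), while $h^{0,j}(E_I)$ is the geometric genus of the nondegenerate toric hypersurface $E_I\subset\PP_{\Gamma_I}$, evaluated by \eqref{eq2.5.3} (or Khovanskii) to $\Lambda^*(\Gamma_I)$; summing over $|I|=n-j$ gives the middle line. The case $j=0$, $\lambda=0$ is the only one needing real bookkeeping: there every $m$ contributes the fundamental class of each positive-dimensional component of $E^{[m-1]}$ and $\cE^{[m]}$ (each connected, by the toric Lefschetz theorem, hence with multiplicity $1$), while the $0$-dimensional strata $E^{[n-1]}=\amalg_{|I|=n}E_I$ and $\cE^{[n]}=\amalg_{|I|=n+1}\cE_I$ contribute $\sum_{|I|=n}\#E_I=\sum_{|I|=n}\bigl(\Lambda^*(\Gamma_I)+1\bigr)$ (roots of a facet polynomial on a segment) and $\sum_{|I|=n+1}h^{0,0}_0(\cE_I)=\sum_{|I|=n+1}1$ ($\mu_d$-fixed points) respectively. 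The two alternating multiplicity-$1$ sums over the $E$- and $\cE$-strata are shifted by one in $|I|$, so they cancel termwise down to $-\#\{I:|I|=n\}$, which exactly absorbs the surplus $\#\{I:|I|=n\}$ in the segment count, leaving $\sum_{|I|=n}\Lambda^*(\Gamma_I)+\sum_{|I|=n+1}1$ --- the last line of \eqref{eq2.5.10}.

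When {\bf (iii)} fails, I would not use the simplicial construction at all but the face-of-$\Gamma$ version of the Denef--Loeser motivic Milnor fiber, for which the analogues of \eqref{eq2.5.1}--\eqref{eq2.5.2} hold with $E^{[\ell]}$, $\cE^{[\ell]}$ indexed directly by the codimension-$(\ell+1)$ faces of $\Gamma$; Danilov's \eqref{eq2.5.3}--\eqref{eq2.5.6} apply to these toric hypersurfaces verbatim, the same extremal-Hodge-number analysis goes through, and faces lying in a coordinate hyperplane drop out because they index strata at infinity, which yields the asserted substitution of $\{\Gamma_I\}$, $\{\NP_I\}$ by the faces $\gamma\subseteq\Gamma$ meeting $\RR^{n+1}_{>0}$ and the cones over them. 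I expect the $j=0$, $\lambda=0$ computation to be the main obstacle: getting the $\mu_d$-action and lattice-point count on the $0$-dimensional strata exactly right and verifying the termwise cancellation that produces the ``$+\sum_{|I|=n+1}1$'' is delicate, and the non-simplicial reduction requires a clean statement that the face-indexed (motivic) formula computes the correct Hodge--Deligne numbers of $H^n_{\van}$ without an explicit resolution.
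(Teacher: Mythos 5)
Your proposal is correct and follows essentially the paper's own route: the proof given there is precisely a direct application of \eqref{eq2.5.2}--\eqref{eq2.5.6} to the extremal Hodge numbers (the paper first invokes the conjugation symmetry $h^{n-j,n}_{\lambda}=h^{n,n-j}_{\{1-\lambda\}}$, but this is cosmetic), and your bookkeeping --- the forced $k=-i$, the collapse to $h^{0,j}_{\lambda}(E^{[n-j-1]})+h^{0,j}_{\lambda}(\cE^{[n-j]})$ for $j\geq 1$, and the telescoping cancellation of the multiplicity-one strata against the surplus in the segment count that yields the third line --- is exactly that computation carried out, and checks out. The only divergence is the non-simplicial case, where the paper simply cites Stapledon's theorem, which supplies precisely the ``clean face-indexed statement'' that you correctly identify as the remaining obstacle in your Denef--Loeser sketch.
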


\begin{proof}
Apply \eqref{eq2.5.2}-\eqref{eq2.5.6} directly to $h^{n-j,n}_{\lambda}(H^n_{\van})=h^{n,n-j}_{\{1-\lambda\}}(H^n_{\van})$ to obtain the result assuming {\bf (iii)}.  To drop {\bf (iii)}, apply instead a result of Stapledon \cite[Thm. 6.20]{Sta} (again a straightforward computation, which is omitted to avoid introducing heaps of notation).	
\end{proof}

\begin{rem}
Theorem \ref{thm2.5B} includes results of Matsui and Takeuchi \cite{MT} in its $j=0$ and $1$ cases. Note that the formula for $h^{n,n}_{\van,0}$ is counting points in the intersection of $\ZZ_{>0}^{n+1}$ with the 1-stratum of $\Gamma$. 	
\end{rem}

\noindent An interesting scenario here is when the $X_{t\neq 0}$ are Calabi--Yau $n$-folds, so that one $h^{0,j_0}_{\lm} = 1$ and the other $\{h^{0,j}_{\lm}\}_{j\neq j_0}$ vanish:

\begin{defn}\label{def-Kgen}
We shall say in this case that the CY-degeneration has \emph{type} $n+1-j_0$ (sometimes expressed in Roman numerals).	
\end{defn}

\noindent This definition generalizes Kulikov's notation (I), (II), (III) for the semistable reduction in the $K3$ setting. Of course, (I) is equivalent to Picard--Lefschetz monodromy.

\begin{cor}\label{cor2.5B1}
For a CY-degeneration with $h^{1,n}_{\pha}=0$, smooth total space, and unique isolated singularity \textup{(}with Newton polytope $\NP$\textup{)}, $\NP^c \cap \ZZ^{n+1}_{>0}$ is either empty or $\{\underline{1}\}$, where $\underline{1}=(1,\ldots,1)$. Accordingly, this degeneration is:\footnote{with modifications as in Thm. \ref{thm2.5B} if {\bf (iii)} does not hold.}
\begin{itemize}[leftmargin=0.5cm]
\item of type $1$ if $\underline{1}\in \text{int}(\NP)$;
\item of type $|I|$ if $\underline{1}\in \text{int}(\NP_I)$;
\item of type $|I|+1$ if $\underline{1}\in \text{int}(\Gamma_I)$ \textup{(}for $|I|\leq n$\textup{)}; and
\item of type $n+1$ if $\underline{1}$ is a vertex of $\Gamma$. 
\end{itemize}
\end{cor}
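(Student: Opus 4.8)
The plan is to reduce the corollary to a single lattice-point identity and then to a convexity observation. Assume first that {\bf (iii)} holds, so that the toric picture of $\S$\ref{S5.1} and the formulas \eqref{eq2.5.2}--\eqref{eq2.5.6} apply; the general case follows by substituting the octant-meeting faces of $\Gamma$ for the $\Gamma_I$, $\NP_I$ as in Theorem \ref{thm2.5B}. First I would enumerate the lattice points of $\NP^c$ with strictly positive coordinates according to the cell of the polyhedral complex $\NP^c$ in whose relative interior they lie: a point of $\Gamma$ lies in $\mathrm{relint}(\Gamma_I)$ for a unique $I$, any other nonzero point lies in $\mathrm{relint}(\NP_\tau)$ for a unique face $\tau$ of $\Gamma$, and the origin is not positive. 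Since $\h$ is linear on each cone $C_i$, equals $1$ on $\Gamma$ and $0$ at $\uo$, one has $0<\h<1$ on each $\mathrm{relint}(\NP_\tau)$, so every such interior lattice point has non-integral $\h$-value (equivalently $\Lambda^*_0(\NP_\tau)=0$). Feeding this decomposition into the extremal-$N$-string formulas \eqref{eq2.5.10} yields
\[ \bigl|\NP^c\cap\ZZ^{n+1}_{>0}\bigr| \;=\; \sum_{p=1}^n h^{p,n}_{\van,0}\;+\;\sum_{p=0}^n h^{p,n}_{\van,\neq 0}. \]

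The second step is to translate the right-hand side via \eqref{eq2.5.9}. The ``$\neq 0$'' sum becomes $\sum_{j=0}^n h^{0,j}_{\lm,\neq 0}$, while the ``$0$'' sum telescopes to $\sum_{j=0}^{n-1}h^{0,j}_{\lm,0}+h^{1,n}_{\pha}$, the only departure from an identity being at $j=n-1$, where \eqref{eq2.5.9} carries the correction $-h^{1,n}_{\pha}$. Adding, and using $h^{0,n}_{\lm,0}=h^{0,n}(X_0)$ together with $\sum_j h^{0,j}_{\lm}=h^{0,n}(X_{t\neq 0})=p_g(X_t)$, one obtains the key identity
\[ \bigl|\NP^c\cap\ZZ^{n+1}_{>0}\bigr| \;=\; p_g(X_t)\;-\;h^{0,n}(X_0)\;+\;h^{1,n}_{\pha}. \]
Under the hypotheses of the corollary this reads $|\NP^c\cap\ZZ^{n+1}_{>0}|=1-h^{0,n}(X_0)\in\{0,1\}$.

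For the first assertion I would then observe that $\NP^c$ is \emph{downward-closed} in $\RR^{n+1}_{\geq 0}$ ($\NP$ is upward-closed, so $\RR^{n+1}_{\geq 0}\setminus\NP$ is downward-closed, and coordinatewise truncation of an approximating sequence shows the closure is too). Hence if $\um\in\NP^c\cap\ZZ^{n+1}_{>0}$ with $\um\neq\underline{1}$, then $\um-e_i\in\NP^c\cap\ZZ^{n+1}_{>0}$ for any $i$ with $m_i\geq 2$; so a nonempty $\NP^c\cap\ZZ^{n+1}_{>0}$ necessarily contains $\underline{1}$, and a one-element one equals $\{\underline{1}\}$. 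Combined with the bound $|\NP^c\cap\ZZ^{n+1}_{>0}|\leq 1$ this proves $\NP^c\cap\ZZ^{n+1}_{>0}\in\{\emptyset,\{\underline{1}\}\}$, the two cases corresponding to $\underline{1}\in\mathrm{int}(\NP)$ ($\h(\underline{1})>1$) versus $\underline{1}\in\NP^c$ ($\h(\underline{1})\leq 1$). For the type statement it remains only to read off which summand carries the (at most one) unit. If $\underline{1}\in\mathrm{int}(\NP)$, then $\NP^c\cap\ZZ^{n+1}_{>0}=\emptyset$ forces $h^{0,n}(X_0)=1$, hence $h^{0,n}_{\lm}=1$ and the type is $1$. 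Otherwise $\underline{1}$ is the unique positive lattice point of $\NP^c$, so exactly one extremal vanishing-Hodge number equals $1$: if $\underline{1}\in\mathrm{relint}(\NP_I)$ it contributes to $h^{|I|-1,n}_{\van,\neq 0}$, hence by \eqref{eq2.5.9} to $h^{0,n+1-|I|}_{\lm}$, so $j_0=n+1-|I|$ and the type is $n+1-j_0=|I|$; if $\underline{1}\in\mathrm{relint}(\Gamma_I)$ with $|I|\leq n$ it contributes to $h^{|I|,n}_{\van,0}$, hence to $h^{0,n-|I|}_{\lm}$, so the type is $|I|+1$; and if $\underline{1}$ is a vertex of $\Gamma$ it contributes to the ``$\sum_{|I|=n+1}1$'' term of $h^{n,n}_{\van,0}$, hence to $h^{0,0}_{\lm}$, so the type is $n+1$.

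Beyond routine bookkeeping, the main obstacle is the first step: one must check that the face-sums in \eqref{eq2.5.10} see precisely the lattice points with \emph{strictly} positive coordinates --- faces of $\Gamma$ (and cells $\NP_\tau$) meeting a coordinate hyperplane must contribute trivially to the extremal vanishing cohomology, and this is exactly where the convenience and nondegeneracy hypotheses {\bf (i)}--{\bf (ii)} are used --- and that the index $j=n$ that is absent from line~2 of \eqref{eq2.5.9} is correctly absorbed into the term $h^{0,n}(X_0)=h^{0,n}_{\lm,0}$. This last term, which takes the value $0$ or $1$ since the central fiber is $K$-trivial, is precisely what distinguishes type $1$ from the remaining types.
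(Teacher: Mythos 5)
Your proposal is correct and follows essentially the same route as the paper: the convexity/downward-closedness of $\NP^c$ gives that a nonempty $\NP^c\cap\ZZ^{n+1}_{>0}$ must contain $\underline{1}$, and Theorem \ref{thm2.5B} together with \eqref{eq2.5.9} and the Calabi--Yau normalization $\sum_j h^{0,j}_{\lm}=1$ pins down both the cardinality and the type. The only difference is one of exposition: you make explicit the intermediate identity $|\NP^c\cap\ZZ^{n+1}_{>0}|=p_g(X_t)-h^{0,n}(X_0)+h^{1,n}_{\pha}$ and the cell-by-cell bookkeeping that the paper compresses into ``now apply Theorem \ref{thm2.5B}.''
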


\begin{proof}
If any point of $\ZZ_{>0}^{n+1}$ meets $\NP^c$, then $\underline{1}$ does (by convexity of $\text{int}(\NP)$). Now apply Theorem \ref{thm2.5B}.
\end{proof}

\noindent The corollary is closely related to work of Watanabe \cite{Wa}. Note that by Theorem \ref{thm2.5A}, the CY degeneration is log-canonical if $\underline{1}\in \NP$ and rational if $\underline{1}\in \text{int}(\NP)$. In fact, the converse statements are true, and much more generally we have:

\begin{cor}\label{cor2.5B2}
Under assumptions {\bf (i)}-{\bf (ii)}, the singularity $(X_0,p)$ is \textup{(a)} log-canonical iff $\textup{int}(\NP^c)\cap \ZZ^{n+1}_{>0}=\emptyset$, and \textup{(b)} rational iff $\NP^c\cap \ZZ^{n+1}_{>0}=\emptyset$.	
\end{cor}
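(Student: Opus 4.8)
The plan is to route everything through Proposition~\ref{prop2.4a}. First I would record an elementary reformulation of the two hypotheses. Since $\h$ is positively homogeneous of degree $1$ and nondecreasing in each coordinate (the latter because $\NP^c$, as the closure of the complement of the upper set $\NP$, is a lower set), for every $\um\in\ZZ^{n+1}_{>0}$ one has $\h(\um)\geq\h(\underline{1})$, together with $\um\in\NP\iff\h(\um)\geq1$, $\um\in\mathrm{int}(\NP^c)\iff\h(\um)<1$, and $\um\in\NP^c\iff\h(\um)\leq1$. Hence $\mathrm{int}(\NP^c)\cap\ZZ^{n+1}_{>0}=\emptyset\iff\h(\underline{1})\geq1\iff\underline{1}\in\NP$, and $\NP^c\cap\ZZ^{n+1}_{>0}=\emptyset\iff\h(\underline{1})>1\iff\underline{1}\in\mathrm{int}(\NP)$. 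By Prop.~\ref{prop2.4a}(v),(vi), statements (a) and (b) are therefore equivalent to $\sigma_f^{\mathrm{min}}\geq1\iff\underline{1}\in\NP$ and $\sigma_f^{\mathrm{min}}>1\iff\underline{1}\in\mathrm{int}(\NP)$; i.e. to the identity $\min\{1,\sigma_f^{\mathrm{min}}\}=\min\{1,\h(\underline{1})\}$ together with the finer implication $\h(\underline{1})=1\Longrightarrow\sigma_f^{\mathrm{min}}\leq1$.

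For the ``$\Longleftarrow$'' directions I would simply quote Theorem~\ref{thm2.5A} with $c=1$: as observed there, \eqref{eq2.5.7} reads $\mathrm{int}(\NP^c)\cap\ZZ^{n+1}_{>0}=\emptyset$ and \eqref{eq2.5.8} reads $\NP^c\cap\ZZ^{n+1}_{>0}=\emptyset$, so these give $\sigma_f^{\mathrm{min}}\geq1$ resp. $\sigma_f^{\mathrm{min}}>1$. The only point requiring care is that Theorem~\ref{thm2.5A} was established under the simpliciality assumption {\bf(iii)}, while here only {\bf(i)}--{\bf(ii)} are in force; this is dealt with exactly as in Theorem~\ref{thm2.5B}, either by passing to a simplicial refinement of the fans in the diagram of $\S$\ref{S5.1} (which changes neither $\NP$ nor the local analytic type of $f$ at $p$, hence neither the condition nor $\sigma_f^{\mathrm{min}}$) or by rerunning the computation of Theorem~\ref{thm2.5A} with Stapledon's formulas replacing Danilov's.

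The ``$\Longrightarrow$'' directions are the real content, and I would argue contrapositively. Suppose $\underline{1}\notin\NP$, so $\h(\underline{1})<1$; let $\tau$ be the unique face of $\Gamma$ whose relative interior meets the ray $\RR_{>0}\underline{1}$, and put $d:=d_\tau$. Then $\underline{1}$ lies in the relative interior of $\NP_\tau=\mathrm{conv}(\{\uo\},\tau)$, i.e. $\underline{1}$ is an interior lattice point of $\NP_\tau$ with $\h(\underline{1})\notin\ZZ$; feeding this into \eqref{eq2.5.2}--\eqref{eq2.5.6} (equivalently, into the Newton-filtration description of $(V_f,F^{\bullet},T^{\mathrm{ss}})$ of Theorems~\ref{t5.2a}--\ref{t5.2b}) produces a nonzero class in $V_{f,\h(\underline{1})}^{0,d}$, so $\h(\underline{1})\in|\sigma_f|$ and $\sigma_f^{\mathrm{min}}\leq\h(\underline{1})<1$; hence $(X_0,p)$ is not du Bois by Prop.~\ref{prop2.4a}(v). (When $d=n$ this is just the first line of \eqref{eq2.5.10}, $h^{0,n}_{\van,\h(\underline{1})}=\sum_{|I|=1}\Lambda^*_{\h(\underline{1})}(\NP_I)\geq1$; for $d<n$ it is a non-extremal Hodge number of the tail, which is why one genuinely needs the full spectral sequence rather than Theorem~\ref{thm2.5B} alone.) For (b) it remains to rule out $\underline{1}$ lying \emph{on} $\Gamma$: if $\underline{1}\in\Gamma$, then the monomial $1\in R=\CC\{\uz\}/J(f)$ has Newton weight $\h(\underline{1})=1$, so by Theorems~\ref{t5.2a}--\ref{t5.2b} (equivalently by \eqref{eq2.5.2}--\eqref{eq2.5.6}) it contributes a spectral value $\leq1$ to $\tilde\sigma_f$, i.e. $\sigma_f^{\mathrm{min}}\leq1$, and $(X_0,p)$ is not rational by Prop.~\ref{prop2.4a}(vi).

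I expect the last step — that $\underline{1}\in\Gamma$, and more generally $\underline{1}$ sitting over a low-dimensional face of $\Gamma$, already forces a spectral value at the Newton distance $\h(\underline{1})$ — to be the main obstacle: it is precisely the sharp form of the identity $\sigma_f^{\mathrm{min}}=\h(\underline{1})$ for convenient nondegenerate isolated hypersurface singularities (Varchenko), and it involves the interior Hodge numbers of the tail, which the tidy formulas of Theorem~\ref{thm2.5B} do not detect. If one is content to invoke that identity directly together with Prop.~\ref{prop2.4a}(i), the whole Corollary collapses to the homogeneity-and-monotonicity bookkeeping of the first paragraph; I would mention this shortcut while keeping the self-contained toric argument — which also covers the non-simple case — as the main line.
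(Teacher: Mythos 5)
Your overall strategy---reducing via Prop.~\ref{prop2.4a}(v),(vi) to locating $\sigma_f^{\mathrm{min}}$ relative to $1$, then translating this into the position of $\underline{1}$ relative to $\NP$---is the right one, and the convexity bookkeeping in your first paragraph is fine. But there are two genuine soft spots, both of which the paper's own proof closes by running \emph{everything} through Theorem~\ref{thm2.5B}. In your ``$\Longrightarrow$'' direction, the key assertion---that an interior lattice point of $\NP_{\tau}$ ``produces a nonzero class in $V_{f,\h(\underline{1})}^{0,d}$''---is exactly where cancellation must be ruled out: \eqref{eq2.5.2} is an alternating sum over the $\mathbb{E}_1$-page, so positivity of a single $\Lambda^*_{\lambda}(\NP_I)$ does not by itself force $h^{0,d}_{\van,\lambda}\neq 0$. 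You flag this as ``the main obstacle'' and propose falling back on Varchenko's identity, but no outside input is needed: a spectral value $\alpha<1$ forces $p=0$ and $\lambda\neq 0$, a spectral value $\alpha=1$ forces $p=1$ and $\lambda=0$, and the symmetry of Prop.~\ref{prop1.2}(ii) (conjugation plus $\iota_n$) identifies $h^{0,j}_{\van,\lambda}$ and $h^{1,j}_{\van,0}$ with extremal numbers of the form $h^{n-j,n}_{\van,\bullet}$, i.e.\ classes on the $q=n$ edge. These are computed by the manifestly nonnegative formulas \eqref{eq2.5.10} for \emph{all} $j$, hence for faces $\tau$ of every dimension $d$; your claim that ``for $d<n$ it is a non-extremal Hodge number \ldots which is why one genuinely needs the full spectral sequence rather than Theorem~\ref{thm2.5B} alone'' is therefore backwards. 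The paper's proof is a single chain of equivalences: $\sigma_f^{\mathrm{min}}\geq 1 \iff h^{n-j,n}_{\van,\lambda\neq 0}=0\ (\forall j,\lambda)\iff \mathrm{int}(\NP_I)\cap\ZZ^{n+1}_{>0}=\emptyset\ (\forall I)$, and $1\notin|\sigma_f| \iff h^{n-j,n}_{\van,0}=0\ (\forall j)\iff \Gamma\cap\ZZ^{n+1}_{>0}=\emptyset$, with no separate treatment of the two implications.

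Second, your ``$\Longleftarrow$'' via Theorem~\ref{thm2.5A} imports assumption {\bf(iii)}, and the proposed repair is not available as stated: {\bf(iii)} is a property of the polytope $\NP$ itself (simpliciality of its vertex cones), not of a chosen fan, so one cannot ``pass to a simplicial refinement'' without either changing $\NP$ or redoing the whole construction of $\S$\ref{S5.1} over a subdivision---which is precisely the content of the Stapledon result the paper invokes in Theorem~\ref{thm2.5B}. Since Theorem~\ref{thm2.5B} is already stated and proved without {\bf(iii)}, routing both directions through it disposes of this issue for free; as written, your argument either silently assumes {\bf(iii)} or outsources the hard non-cancellation step to Varchenko's identity $\sigma_f^{\mathrm{min}}=\h(\underline{1})$ (valid in the relevant range $\h(\underline{1})\leq 1$), which is essentially the statement being proved.
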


\begin{proof}
(a):  $(X_0,p)$ log-canonical $\iff$ $\sigma_f^{\text{min}}\geq 1$ $\iff$ $h^{0,j}_{\van}=0$ ($\forall j$) $\iff$ $h^{n-j,n}_{\van,\lambda}=0$ ($\forall j,\forall \lambda\neq 0$)	$\underset{\text{Thm. }\ref{thm2.5B}}{\iff}$ $\text{int}(\NP_I)\cap \ZZ^{n+1}_{>0}=\emptyset$ ($\forall I$) $\iff$ $\text{int}(\NP^c)\cap \ZZ^{n+1}_{>0}=\emptyset$.

\noindent (b): $(X_0,p)$ rational $\iff$ $\sigma_f^{\text{min}}>1$ $\iff$ $\text{int}(\NP^c)\cap \ZZ^{n+1}_{>0}=\emptyset$ and $1\notin |\sigma_f|$. But $1\notin |\sigma_f|$ $\iff$ $h^{1,j}_{\van,0}$ ($\forall j$) $\iff$ $h^{n-j+1,n}_{\van,0}$ ($\forall j$) $\underset{\text{Thm. }\ref{thm2.5B}}{\iff}$ $\Gamma\cap\ZZ^{n+1}_{>0}=\emptyset.$
\end{proof}

In fact, we can strengthen the entire statement of Theorem \ref{thm2.5A}.  It follows from Theorem \ref{t5.2b} below (or \cite{Sa6}) that, assuming only {\bf (i)}-{\bf (ii)}, we have $\sigma_f^{\text{min}}=c \iff \underline{1}\in \partial(c\NP)$.

\subsection{Brieskorn lattice approach} \label{S5.2}

Begin with the local system $\HH :=\{ H^n(\mathfrak{F}_t,\ZZ)\}_{t\neq 0}$ and connection $(\ch:=\HH\otimes \co_{\disc^*},\nabla)$ determined by the Milnor fiber.\footnote{In what follows, we shall write $\partial_t$ [resp. $\delta_t$] for $\nabla_{\partial_t}$ [resp. $\nabla_{t\partial_t}$].  (Note that on $(\jmath_* \ch)_0$, $\delta_t$ computes $\mathrm{Res}(\nabla)$.)} Writing $\mathsf{e}(\cdot)=e^{2\pi\sqrt{-1}(\cdot)}$, we decompose $\HH_{\CC}=\oplus_{\lambda\in[0,1)}\HH^{e(\lambda)}$ according to eigenvalues of $T^{\text{ss}}$.  Regarding $\mathsf{e}$ as a map $\mathfrak{H} \overset{\mathsf{e}}{\to} \disc^* \overset{\jmath}{\hookrightarrow} \disc$, define for each $\alpha\in\QQ$ a $\CC$-subspace $C^{\alpha}\subset (\jmath_*\ch)_0$ by the image of\footnote{The point is that $e^{\alpha\log(t)-\frac{\log(t)}{2\pi\sqrt{-1}}N}\nu$ is $T=T^{\text{ss}}e^N$-invariant hence decends to $\disc^*$.}
\begin{equation}\label{eq5.2.0}
\begin{split}
\psi_{\alpha}:\, \Gamma(\mathsf{e}^*\HH^{\mathsf{e}(-\alpha)}&)\,\to\, (\jmath_* \ch)_0 \\
&\nu \mapsto t^{\alpha-\frac{N}{2\pi\sqrt{-1}}}\nu. 
\end{split}
\end{equation}
Since one clearly has
\begin{equation}\label{eq5.2.1}
(\delta_t - \alpha)\psi_{\alpha}\nu \;=\; -\tfrac{N}{2\pi\sqrt{-1}} \psi_{\alpha} \nu \, ,
\end{equation}
$C^{\alpha}$ is the generalized eigenspace $\tilde{E}_{\alpha}(\delta_t)$.

Now $(\jmath_* \ch)_0$ is too large:  since we work in the analytic topology, it will include germs of non-meromorphic sections.  Writing $\co:=\co_{\disc,0}=\CC\{t\}$ for the convergent power series ring and $\mathcal{K}:=\CC\{t\}[t^{-1}]$ for its fraction field, we shall work inside the $\mu_f$-dimensional $\mathcal{K}$-vector space $\mathcal{M}:=\oplus_{\lambda\in[0,1)}\mathcal{K} C^{\lambda}$; this may also be viewed as a $\CC[t,\partial_t]$-module, with $t$ [resp. $\partial_t$] restricting to isomorphisms $C^{\alpha}\overset{\cong}{\to}C^{\alpha+1}$ [resp. $C^{\alpha+1}\overset{\cong}{\to}C^{\alpha}$, if $\alpha\neq -1$]. An $\co$-submodule of rank $\mu_f$ in $\mathcal{M}$ is called a \emph{lattice}.  The obvious examples are the terms of the $\cv$\emph{-filtration}
$$\cv^{\beta}:=\oplus_{\beta\leq \alpha<\beta+1}\co C^{\alpha}\supseteq \oplus_{\beta<\alpha\leq\beta+1}\co C^{\alpha} =:\cv^{>\beta}.$$
In particular, $\ch_e := \cv^{>-1}$ is the \emph{canonical lattice}. Its quotient by $\tilde{\ch}_e := \cv^{>0} = t\ch_e = \partial_t^{-1}\ch_e$ identifies naturally with $V_f := H_{\van}^n (X_t)$.

Two less obvious lattices are given by the images of 
\begin{equation}\label{eq5.2.2}
\begin{split}
\cb :=\frac{\Omega_{\cx,p}^{n+1}}{d\Omega_{\cx,p}^{n-1}\wedge df}& \overset{\imath}{\longhookrightarrow} \mathcal{M}\\
&\mspace{-20mu} [\Omega] \longmapsto \int_{(\cdot)} \left. \frac{\Omega}{df}\right|_{X_t} = \int_{(\cdot)} \mathrm{Res}_{X_t}\left( \frac{\Omega}{f-t}\right)
\end{split}
\end{equation}
and
\begin{equation}\label{eq5.2.3}
\begin{split}
\tcb :=\frac{\Omega_{\cx,p}^{n}}{\Omega_{\cx,p}^{n-1}\wedge df + d\Omega_{\cx,p}^{n-1}}\cong \frac{\Omega^{n}_{\cx/\disc,\,p}}{d\Omega^{n-1}_{\cx/\disc,\,p}}& \overset{\tilde{\imath}}{\longhookrightarrow} \mathcal{M}\\
&\mspace{-20mu} [\omega] \longmapsto \int_{(\cdot)} \left. \omega\right|_{X_t}.
\end{split}
\end{equation}
(Here the integrals are to be interpreted as over $\{\gamma_t\}\in \{H_n(\mathfrak{F}_t,\ZZ)\}_{t\neq 0}$.) The inclusion $\wedge df:\,\tcb\hookrightarrow \cb$ clearly satisfies $\imath([\omega\wedge df])=\tilde{\imath}(\omega)$, so that
\begin{equation}\label{eq5.2.4}
\Omega_f :=\frac{\co_{n+1}}{J_f}:=\frac{\co_{\CC^{n+1},\uo}}{(\partial_{z_1}f,\ldots,\partial_{z_{n+1}}f)} \overset{\cong}{\underset{\wedge d\uz}{\to}} \frac{\Omega^{n+1}_{\CC^{n+1},\uo}}{\Omega^n_{\CC^{n+1},\uo}\wedge df}
\cong \frac{\Omega^{n+1}_{\cx,p}}{\Omega^n_{\cx,p}\wedge df}\cong \frac{\cb}{\tcb\wedge df}\cong \frac{\imath(\cb)}{\tilde{\imath}(\tcb)}.
\end{equation}
Every $\Omega$ is a $d\omega$, and by the Cartan formula $\imath(d\omega)=\partial_t \tilde{\imath}(\omega)$, so that $\imath(\cb)=\partial_t\tilde{\imath}(\tcb)$. Since $\lim_{t\to 0}\int_{\gamma_t}\left.\omega\right|_{X_t} =0$ on any vanishing cycle, $\tilde{\imath}(\tcb)\subset \cv^{>0}$ and $\imath(\cb)\subset \cv^{>-1}$, and $\partial_t,\,\partial_t^{-1}$ actually induce an isomorphism $\imath(\cb) \cong \tilde{\imath}(\tcb)$. But unlike $\ch_e$, the Brieskorn lattice $\imath(\cb)$ is (in general) not a sum of $C^{\alpha}$'s (or even a sum of subspaces of $C^{\alpha}$'s), so that we may have $\delta_t \imath(\cb) \not\subset \imath(\cb)$ and $t\imath(\cb)\not\subset \tilde{\imath}(\cb)$.  Since multiplication by $t$ on $\imath(\cb)$ and $\tilde{\imath}(\tcb)$ corresponds to multiplication by $f$ on $\Omega_f$, the latter may therefore be nontrivial.

To relate $V_f$ to $\Omega_f$, we cannot just consider the natural map $\frac{\imath(\cb)}{\tilde{\imath}(\tcb)} \to \frac{\ch_e}{\tilde{\ch}_e}$, which is almost never an isomorphism and frequently zero. Instead, we introduce a filtration by sublattices
\begin{equation} \label{eq5.2.5}
\cf^k \ch_e := (\partial_t^{n-k}\imath(\cb))\cap \ch_e
\end{equation}
on $\ch_e$ (and $\tilde{\ch}_e$), with the aim of relating its graded pieces on $V_f\cong \oplus_{\lambda\in [0,1)}\gr_{\cv}^{-\lambda}\ch_e$ to those of\footnote{Note that $\gr_{\cv}^{\alpha}:=\frac{\cv^{\alpha}}{\cv^{>\alpha}}$.} $\cv^{\bullet}$ (pulled back to $\cb,\,\tcb$ via $\imath,\,\tilde{\imath}$) on $\Omega_f$. Writing $\cb^{\alpha}:=\gr_{\cv}^{\alpha}\cb \overset{\bar{\imath}}{\hookrightarrow}\gr_{\cv}^{\alpha}\ch_e$ [resp. $\tcb^{\alpha}:=\gr_{\cv}^{\alpha}\tcb$] and $\alpha=p+\lambda$, we have isomorphisms \footnote{Warning: $C^{\alpha}$ is a subspace of $\ch_e$. We write $\gr_{\cv}^{\alpha}\ch_e$ etc. to emphasize that $\bar{\imath}(\cb^{\alpha})$ is the projection to $\gr_{\cv}^{\alpha}$ of $\imath(\cb)\cap \cv^{\alpha}$; under the identification $C^{\alpha}\underset{\cong}{\to}\gr_{\cv}^{\alpha}$, this is \emph{not} the same as $\imath(\cb)\cap C^{\alpha}$ (which may be a proper subspace of $\bar{\imath}(\cb^{\alpha})$).} 
\begin{equation}\label{eq5.2.6}
\cf^p\gr^{-\lambda}_{\cv}\ch_e \underset{\partial_t^{n-p}\circ \bar{\imath}}{\overset{\cong}{\longleftarrow}} \cb^{n-\alpha}
\end{equation}
hence
\begin{equation}\label{eq5.2.7}
\gr{\cf}^p\gr^{-\lambda}_{\cv}\ch_e \underset{\partial_t^{n-p}\circ \bar{\imath}}{\overset{\cong}{\longleftarrow}} \frac{\cb^{n-\alpha}}{\tcb^{n-\alpha}\wedge df} =\gr_{\cv}^{n-\alpha} \Omega_f.
\end{equation}
Writing $\bar{\Omega}_f :=\co_{n+1}/(\partial_{z_1}f,\ldots ,\partial_{z_{n+1}}f,f),$ the basic theoretical results can then be summarized as follows:

\begin{thm}\label{t5.2a}
\begin{enumerate}[label=(\alph*)]
\item The Milnor and spectral numbers are given by $\mu_f = \dim_{\CC}(\Omega_f)$ and $m_{\alpha}=\dim_{\CC}(\gr_{\cv}^{n-\alpha}\Omega_f)$.
\item The primitive Milnor and primitive spectral numbers satisfy $\mu_f^0\geq \dim_{\CC}(\bar{\Omega}_f)$ and $m_{\alpha}^0 \geq \dim_{\CC}(\gr_{\cv}^{n-\alpha}\bar{\Omega}_f)$.
\item \cite{Va4} The Jordan block structures of $f\cdot$ on\footnote{By this we mean the map $\gr_{\cv}\Omega_f \to \gr_{\cv[1]}\Omega_f$ of associated gradeds induced by multiplication by $f$ -- that is, the direct sum (over $\alpha\in \QQ$) of maps $\gr_{\cv}^{\alpha}\to \gr_{\cv}^{\alpha+1}$.} $\gr_{\cv}\Omega_f$ and $N$ on $V_f$ agree.
\end{enumerate}
\end{thm}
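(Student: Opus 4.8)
The plan is to deduce Theorem \ref{t5.2a} from the interplay between the $\cv$-filtration, the Brieskorn lattice, and the filtration $\cf^\bullet\ch_e$ of \eqref{eq5.2.5}, together with the isomorphisms \eqref{eq5.2.6}-\eqref{eq5.2.7}.

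\smallskip

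\textbf{Part (a).} First I would establish $\mu_f=\dim_\CC(\Omega_f)$. One direction is classical (Milnor–Orlik / Brieskorn): $\cb=\Omega^{n+1}_{\cx,p}/(d\Omega^{n-1}_{\cx,p}\wedge df)\cong \Omega_f$ as $\co$-modules via $\wedge d\uz$ (this is \eqref{eq5.2.4}), and $\imath$ embeds $\cb$ as a lattice, i.e. an $\co$-submodule of $\mathcal{M}$ of maximal rank $\mu_f$; hence $\dim_\CC\Omega_f=\dim_\CC(\cb/t\cb)=\mu_f$ because $\imath(\cb)/t\imath(\cb)$ has $\CC$-dimension equal to the rank. (One must quote that $\imath(\cb)$ is indeed a free $\co$-module of rank $\mu_f$ and that multiplication by $t$ on it corresponds to $f\cdot$ on $\Omega_f$, which is already noted in the text.) Next, for the refined statement $m_\alpha=\dim_\CC(\gr^{n-\alpha}_{\cv}\Omega_f)$: summing \eqref{eq5.2.7} over $p$ with $p+\lambda=\alpha$ gives, for fixed $\lambda$,
\[
\bigoplus_{p}\gr_{\cf}^p\gr^{-\lambda}_{\cv}\ch_e\;\cong\;\bigoplus_{p}\gr^{n-p-\lambda}_{\cv}\Omega_f,
\]
and the left side is just $\gr^{-\lambda}_{\cv}\ch_e=V_{f,\lambda}$ (since $\cf^\bullet$ is a finite filtration of $\ch_e$ by sublattices, the associated graded of $\cf^\bullet$ on the finite-dimensional space $\gr^{-\lambda}_{\cv}\ch_e$ recovers that space). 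Matching the $\cf$-degree $p$ to the $\cv$-degree $n-\alpha$ on the right identifies $\dim V^{p}_{f,\lambda}$ with $\dim\gr^{n-\alpha}_{\cv}\Omega_f$; summing over $q$ (i.e. the whole $\gr^p_F$ piece) gives $m_\alpha=\sum_q m_{\alpha,p+q}$. The point that needs care here is that $\cf^p$ as defined in \eqref{eq5.2.5} induces the Hodge filtration $F^p$ on $V_f$ under $\ch_e\twoheadrightarrow V_f$ — this is precisely M. Saito's comparison of the Brieskorn-lattice filtration with the Hodge filtration of the limiting/vanishing MHS, which I would cite (\cite{Sa3}, or Scherk–Steenbrink \cite{ScSt}); given that, the dimension count is bookkeeping.

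\smallskip

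\textbf{Part (b).} The primitive versions: $P_f=\mathrm{coker}(N:V_f\to V_f)$, and by part (a) $N$ corresponds under the graded isomorphisms to $f\cdot$ on $\gr_{\cv}\Omega_f$ (this is exactly part (c), so I would prove (c) first and use it here). The cokernel of $f\cdot$ on $\gr_{\cv}\Omega_f$ surjects onto (but need not equal) $\gr_{\cv}$ of the cokernel of $f\cdot$ on $\Omega_f$, which is $\gr_{\cv}\bar\Omega_f$; this gives the inequalities $\mu^0_f\ge\dim\bar\Omega_f$ and $m^0_\alpha\ge\dim\gr^{n-\alpha}_{\cv}\bar\Omega_f$ rather than equalities (the failure is because taking cokernel of a map of filtered spaces and then passing to $\gr$ need not commute — one only gets a surjection $\mathrm{coker}(\gr f\cdot)\twoheadrightarrow \gr(\mathrm{coker}\,f\cdot)$). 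I would spell out this one homological-algebra lemma and note that strict exactness, which would upgrade $\ge$ to $=$, generally fails.

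\smallskip

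\textbf{Part (c).} This is cited to Varchenko \cite{Va4}, so I would keep the argument brief: multiplication by $t$ on $\mathcal{M}$ shifts $\cv^\alpha\to\cv^{\alpha+1}$ isomorphically, and on $\imath(\cb)$ it corresponds to $f\cdot$ on $\Omega_f$; passing to $\cv$-gradeds, $f\cdot$ induces $\gr^\alpha_{\cv}\Omega_f\to\gr^{\alpha+1}_{\cv}\Omega_f$, which under \eqref{eq5.2.7} is intertwined (up to the fixed powers $\partial_t^{n-p}$, which commute appropriately with $t$ modulo lower $\cv$-order) with the map $\gr^{p}_{\cf}\gr^{-\lambda}_{\cv}\ch_e\to\gr^{p-1}_{\cf}\gr^{-\lambda}_{\cv}\ch_e$ induced by $t\partial_t^{-1}\cdot(\text{shift})$, i.e. with $-\tfrac{1}{2\pi\sqrt{-1}}N$ on $V_{f,\lambda}$ by \eqref{eq5.2.1}. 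The nilpotent endomorphisms therefore have the same Jordan type. I expect \textbf{the main obstacle} to be item \textbf{(a)}, specifically justifying cleanly that $\cf^\bullet\ch_e$ induces the Hodge filtration on $V_f$ and that the grading bookkeeping matches $p$ with $n-\alpha$ (signs and the exact placement of twists); the rest is either classical or explicitly cited. I would therefore organize the write-up as: (c) via the $t\leftrightarrow f$ dictionary on lattices; (a) via \eqref{eq5.2.7} plus the Saito comparison; (b) as a short corollary of (a) and (c) together with the cokernel-vs-graded lemma.
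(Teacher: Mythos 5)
Your proposal follows essentially the same route as the paper: (a) comes from the Scherk--Steenbrink identification of $\cf^{\bullet}$ with the Hodge filtration together with \eqref{eq5.2.7}, (c) from the commutator computation intertwining $f\cdot$ on $\gr_{\cv}\Omega_f$ with $-\tfrac{N}{2\pi\sqrt{-1}}$ via \eqref{eq5.2.1}, and (b) from (c) plus the observation that the rank of the induced graded map can only drop. The one slip is the identification $\Omega_f\cong\cb/t\cb$: by \eqref{eq5.2.4} the relevant quotient is $\imath(\cb)/\tilde{\imath}(\tcb)=\imath(\cb)/\partial_t^{-1}\imath(\cb)$ rather than $\imath(\cb)/t\imath(\cb)$ (these differ precisely because $t\imath(\cb)\not\subset\tilde{\imath}(\tcb)$ in general), but this preliminary step is redundant anyway since summing your graded count over all $\alpha$ already yields $\mu_f=\dim_{\CC}\Omega_f$.
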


\begin{proof}
By \cite{ScSt}, the above-mentioned identification of $\ch_e/\tilde{\ch}_e$ with $V_f$ maps $\cf^{\bullet}$ from \eqref{eq5.2.5} to the Hodge filtration $F^{\bullet}$.  Since it also identifies $\gr_{\cv}^{-\lambda}\ch_e$ with $V_{f,\lambda}$, and $m_{p+\lambda}=\dim(\gr_F^p V_{f,\lambda})$, (a) follows.  Assuming (c), we have (b) since we defined $\mu^0_f :=\dim(\mathrm{coker}\{N:V_f\to V_f (-1)\})$. Finally, for (c), let $\Omega\in \cb^{n-p-\lambda}$ and $\eta:=\partial_t^{n-p}\bar{\imath}(\Omega) \in \cf^p \gr_{\cv}^{-\lambda}$; then $f \Omega \in \cb^{n-\alpha+1}$ and 
\begin{equation*}
\begin{split}
\partial_t^{n-p+1} \imath(f\Omega)&=\partial_t^{n-p+1} t \imath(\Omega) \\
&= [(n-p+1)+\delta_t]\partial_t^{n-p}\imath(\Omega) \\
&=[(n-(p+\lambda)+1) -\tfrac{N}{2\pi\sqrt{-1}}]\eta \in \cf^{p-1}\gr_{\cv}^{-\lambda}\ch_e
\end{split}
\end{equation*}
by \eqref{eq5.2.1}. Hence 
\begin{equation}\label{eq5.2.8}
\xymatrix{\gr_F^p V_{f,\lambda} \ar [d]_{\frac{-N}{2\pi\sqrt{-1}}} & \gr_{\cv}^{n-p-\lambda}\Omega_f \ar [d]^{f\cdot} \ar [l]^{\partial_t^{n-p}\circ \imath}_{\cong} \\
\gr^{p-1}_F V_{f,\lambda} & \gr_{\cv}^{n-p-\lambda+1}\Omega_f \ar [l]_{\cong}^{\partial_t^{n-p+1}\circ \imath}}
\end{equation}
commutes.
\end{proof}

\begin{rem}\label{rem5.9}
\begin{enumerate}[label=(\alph*)]
\item In light of the Theorem, \eqref{eq5.2.6} expresses the Hodge flag $F^{\bullet}$ on each $V_{f,\lambda}$ (hence on all of $V_f$) in terms of $(\imath(\cb),\gr_{\cv}^{\bullet})$. As mentioned in previous sections, $(\imath(\cb),\cv^{\bullet})$ contains strictly more information.  See \cite[$\S$III.3]{Kul}, \cite[$\S\S$4-6]{Her}, and \cite[$\S$3]{Sa3} for examples.
\item We really do need to use $\CC\{\uz\}$ (or $\CC[\uz]_{(\uz)}$) in defining $\Omega_f$.  In contrast to the quasi-homogeneous case, the affine hypersurfaces $Z_t :=\{f(\uz)=t\}\subset \CC^{n+1}$ need not be homeomorphic to the Milnor fibers $\mathfrak{F}_t$ they contain. In particular, the (compatible) homomorphisms $R_f:=\frac{\CC[\uz]}{(\{\partial_{z_i}f\})} \to \Omega_f$ and $H^n(Z_t)\to H^n(\mathfrak{F}_t)$ need not be isomorphisms. Already for SQH deformations of QH singularities (e.g. $E_{12},N_{16}$) we find that $\dim_{\CC}R_f \neq \dim_{\CC}\Omega_f =\mu_f$. 
\item We must be equally careful with the meaning of $f\cdot$ on $\gr_{\cv}\Omega_f$.  Its rank can easily be strictly smaller than that of $f\cdot$  on $\Omega_f$, which is the reason for the \emph{in}equalities in Thm. \ref{t5.2a}(b).  In particular, in a $\mu$-constant deformation of the singularity (which we recall preserves the weighted spectrum), the \emph{Tjurina number} $\tau_f :=\dim_{\CC}(\bar{\Omega}_f)$ need not remain constant; indeed, this is precisely what happens in SQH deformations of QH singularities.  For example, if $f_s := z^2 + x^5 + y^5 + s x^3 y^3$ ($|s|$ small) is the SQH-deformation of an $N_{16}$ singularity, then $f_s \cdot$ is zero on $\gr_{\cv}\Omega_{f_s}$ for all $s$, but has rank $1$ on $\Omega_{f_s}$ for $s\neq 0$ (with the result that $\tau_{f_s}$ is $16$ for $s=0$ and $15$ for $s\neq 0$).\footnote{The tricky point here is that, for $s\neq 0$, the nontrivial bit of $f_s\cdot$ maps from $\gr_{\cv}^{\tfrac{-1}{10}}$ to $\gr_{\cv}^{\tfrac{11}{10}}$ (rather than to $\gr_{\cv}^{\tfrac{9}{10}}$, which is all ``the map induced on $\gr_{\cv}\Omega_{f_s}$'' is allowed to detect).}
\end{enumerate}
\end{rem}

\noindent Given $g=\sum_{\um\in\ZZ_{\geq 0}^{n+1}}a_{\um}\uz^{\um}\in \CC\{\uz\}$, write $\nu_f(g):=\min\{\mathsf{h}(\um)\mid a_{\um}\neq 0\}$. Using the identification
\begin{equation*}
\cb\cong \frac{\Omega^{n+1}_{\CC^{n+1},\uo}}{d\Omega^n_{\CC^{n+1},\uo}\wedge df} \overset{\cong}{\underset{\cdot d\uz}{\longleftarrow}}\frac{\co}{(\{\partial_{z_i}G\partial_{z_j}f - \partial_{z_j}G\partial_{z_i}f\})}
\end{equation*}
we define the \emph{Newton filtration} on $\cb$ (hence on $\Omega_f$) by
\begin{equation*}
\cn^{\alpha}\cb :=\left\{ [g\,d\uz]\mid \nu_f(z_1 \cdots z_{n+1}g)-1\geq \alpha\right\};
\end{equation*}
replacing the $\geq$ by $>$ gives $\cn^{>\alpha}\cb$.

Now we define several types of ``Poincar\'e polynomial'':
\begin{itemize}
\item for any rational filtration $\mathcal{M}^{\bullet}$ on $\Omega_f$, $$P_{\mathcal{M}^{\bullet}}(u):=\sum_{\alpha\in \QQ}\dim(\gr_{\mathcal{M}}^{\alpha}\Omega_f)u^\alpha\,;$$
\item for any face $\tau$ of $\Gamma$, $$P_{C_{\tau}}(u):=\sum_{\alpha\in \QQ} \left| \mathsf{h}^{-1}(\alpha)\cap C_{\tau}\cap\ZZ_{\geq 0}^{n+1}\right| u^{\alpha}\,; \text{ and}$$
\item writing $\sigma_f = \sum_{\alpha\in\QQ}m_{\alpha}[\alpha]$, $$P_{\sigma_f}(u):=\sum_{\alpha\in\QQ} m_{\alpha} u^{\alpha}.$$
\end{itemize}
By a result of Saito \cite{Sa6}, $\cn^{\bullet}=\cv^{\bullet}$ on $\cb$.  It follows from Theorem \ref{t5.2a} and Prop. \ref{prop1.2}(ii) that
\begin{equation*}
P_{\sigma_f}(u) = u^{n+1}P_{\sigma_f}(u^{-1})=u P_{\cv^{\bullet}}(u)=u P_{\cn^{\bullet}}(u).
\end{equation*}
Combining this with Steenbrink's calculation of $P_{\cn^{\bullet}}$ \cite{steenbrinkvan} yields

\begin{thm}\label{t5.2b}
Assuming {\bf (i)}-{\bf (ii)}, we have:
\begin{equation*}
P_{\sigma_f}(u)=(-1)^{n+1}+\sum_{\tau\subseteq \Gamma}(-1)^{n-d_{\tau}}(1-u)^{k_{\sigma}}P_{C_{\tau}}(u).
\end{equation*}
Assuming in addition that \\ {\bf (iii$'$)} \textup{cones }$C_{\tau}$\textup{ on faces of }$\Gamma$\textup{ are regular simplicial,}\\
then writing $\underline{v}^{(0)},\ldots,\underline{v}^{(d_{\tau})}$ for generators of the monoid $C_{\tau}\cap \ZZ_{\geq 0}^{n+1}$, each $$P_{C_{\tau}}(u)=\prod_{i=0}^{d_{\tau}}(1-u^{\mathsf{h}(\underline{v}^{(i)})})^{-1}.$$
\end{thm}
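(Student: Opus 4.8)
The two displayed formulas are of quite different flavor, and I would prove them separately. The first is essentially a reorganization of Steenbrink's Newton-filtered Hilbert-series computation, fed through the identity $P_{\sigma_f}(u)=u\,P_{\cn^{\bullet}}(u)$ established just before the statement; the second is an elementary lattice-point generating-function identity for a unimodular cone.

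\textbf{The first formula.} By Theorem \ref{t5.2a}(a) together with the $\iota_n$-symmetry from Prop. \ref{prop1.2}(ii) and Saito's equality $\cn^{\bullet}=\cv^{\bullet}$ on $\cb$ \cite{Sa6}, the task reduces to computing $P_{\cn^{\bullet}}(u)=\sum_{\alpha}\dim_{\CC}(\gr_{\cn}^{\alpha}\Omega_f)\,u^{\alpha}$. Under {\bf (i)}--{\bf (ii)} the partials $\partial_{z_1}f,\ldots,\partial_{z_{n+1}}f$ form a regular sequence (isolated singularity) whose leading Newton behaviour is nondegenerate, so that $\gr_{\cn}\Omega_f$ is governed entirely by the combinatorics of $\Gamma$; this is Steenbrink's calculation in \cite{steenbrinkvan}, which I would import essentially verbatim. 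The shape of the answer is obtained by two bookkeeping steps. First, one writes the $\h$-graded Hilbert series of $\CC\{\uz\}$ (with the shift by $z_1\cdots z_{n+1}$ built into the definition of $\cn^{\alpha}\cb$) as an inclusion--exclusion over faces $\tau$ of $\Gamma$: since $\h$ is linear on each cone $C_{\tau}$ and $\ZZ^{n+1}_{\geq 0}$ is the union of the $C_{\tau}$, overlapping only along lower-dimensional cones, an Euler-characteristic count produces a signed sum of the $P_{C_{\tau}}(u)$ up to a global correction term. Second, one accounts for the Koszul contribution of the $n+1$ partials: along $C_{\tau}$ exactly $k_{\tau}$ of the $\partial_{z_i}f$ contribute a factor $(1-u)$ (those $i$ with $z_i$ occurring on the minimal coordinate plane through $\tau$), the rest having strictly positive Newton order, and the alternating signs collapse to $(-1)^{n-d_{\tau}}$ with a single leftover $(-1)^{n+1}$. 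Multiplying by $u$ gives the stated expression for $P_{\sigma_f}(u)$. I would set this up so that the only fact quoted from \cite{steenbrinkvan} is the validity of the Koszul/combinatorial description of $\gr_{\cn}\Omega_f$, with the remainder a routine manipulation of rational functions.

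\textbf{The second formula.} Assume in addition {\bf (iii$'$)}. Then $C_{\tau}$ is a regular simplicial cone of dimension $d_{\tau}+1$ whose primitive generators $\underline{v}^{(0)},\ldots,\underline{v}^{(d_{\tau})}$ extend to a $\ZZ$-basis of $\ZZ^{n+1}$; consequently $C_{\tau}\cap\ZZ^{n+1}_{\geq 0}=C_{\tau}\cap\ZZ^{n+1}$ is precisely the set of $\ZZ_{\geq 0}$-combinations $\sum_{i=0}^{d_{\tau}}a_i\underline{v}^{(i)}$, \emph{each written uniquely}. Since $\h$ is linear on $C_{\tau}$ we get $\h\!\big(\sum_i a_i\underline{v}^{(i)}\big)=\sum_i a_i\,\h(\underline{v}^{(i)})$, whence
\begin{equation*}
P_{C_{\tau}}(u)=\sum_{(a_0,\ldots,a_{d_{\tau}})\in\ZZ_{\geq 0}^{d_{\tau}+1}}u^{\sum_i a_i\h(\underline{v}^{(i)})}=\prod_{i=0}^{d_{\tau}}\sum_{a_i\geq 0}u^{a_i\h(\underline{v}^{(i)})}=\prod_{i=0}^{d_{\tau}}\frac{1}{1-u^{\h(\underline{v}^{(i)})}},
\end{equation*}
the interchange of sum and product (and rationality) being legitimate because $\h(\underline{v}^{(i)})>0$ by convenience of $f$.

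\textbf{Main obstacle.} The delicate part is the first formula: reconciling our normalization of the spectrum (which differs from Steenbrink's, cf. Remark \ref{rem5.12}) and our shifted Newton filtration on $\cb$ with the combinatorial output of \cite{steenbrinkvan}, and getting the inclusion--exclusion signs, the $(1-u)^{k_{\tau}}$ factors, and the isolated $(-1)^{n+1}$ term exactly right — in particular the contributions of faces $\tau$ lying in coordinate hyperplanes, where convenience is what keeps $\h$ positive and the generating functions rational. Everything else (the reduction $P_{\sigma_f}=u\,P_{\cn^{\bullet}}=u\,P_{\cv^{\bullet}}$ and the unimodular-cone computation above) is routine.
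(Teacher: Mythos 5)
Your proposal matches the paper's own (very terse) derivation: the paper likewise obtains the first formula by combining $P_{\sigma_f}(u)=uP_{\cv^{\bullet}}(u)=uP_{\cn^{\bullet}}(u)$ (via Theorem \ref{t5.2a}, Prop. \ref{prop1.2}(ii), and Saito's identification $\cn^{\bullet}=\cv^{\bullet}$ on $\cb$) with Steenbrink's computation of $P_{\cn^{\bullet}}$ in \cite{steenbrinkvan}, and treats the unimodular-cone product formula as immediate. Your explicit free-monoid argument for the second formula is correct and simply fills in a step the paper leaves unstated.
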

\noindent Notice that neither {\bf (iii)} nor {\bf (iii$'$)} implies the other; so the approaches here and in $\S$\ref{S5.1} cover different territory.

\begin{example}\label{e5.11}
For $n\geq 2$ and $\sum_{i=1}^{n+1}\frac{1}{r_i}<1$, the singularity type$$T_{\underline{r}}\;:\;\;\;\;\;f=\prod_{i=1}^{n+1}z_i + \sum_{i=1}^{n+1}z_i^{r_i}$$generates nodes\footnote{Unlike $\prod_{i=1}^{n+1}z_i$ for $n>1$, $z_1 z_2$ already defines an isolated singularity, so is analytically equivalent to $z_1 z_2 + z_1^{r_1} + z_2^{r_2}$ (if $r_1^{-1}+r_2^{-1}<1$).} ($n=1$) and $T_{p,q,r}$ singularities ($n=2$); it satisfies {\bf (i)}, {\bf (ii)}, and {\bf (iii$'$)}, but not {\bf (iii)}.\footnote{For instance, the cone on $\NP$ at $(r_1,0,0)$ in the picture below is not simplicial.}  Enumerate the $n+1$ simplices of $\Gamma$ so that $\Gamma_i$ does not meet the $x_i$-axis, and write $\gamma_i$ for its intersection with $x_i =0$:
\[ \includegraphics[scale=0.5]{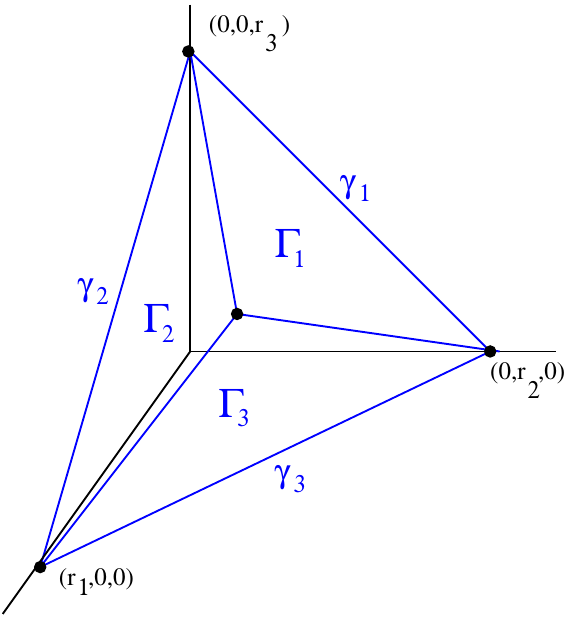} \]
We have $k_{\Gamma_I}=n+1$, $k_{\gamma_I}=n-|I|+1=d_{\Gamma_I}$, and $d_{\gamma_I}=n-|I|$, so that writing\footnote{If $J=\{1,\ldots ,n+1\}$ then $F_{\hat{J}}:=1$.} $G_r:=F_r+1:=\sum_{i=0}^{r-1}u^{\frac{i}{r}}$, $F_{\hat{J}}:=\prod_{j\notin J}F_{r_j}$, $H_k:=(-1)^k + (u-1)^{k-1}$, and $P_k:=\sum_{i=1}^{k-1} u^i$, Theorem \ref{t5.2b} yields
\begin{equation*}
\begin{split}
P_{\sigma_f}(u)&= \sum_{k=1}^{n+1}\sum_{|I|=k}\frac{\{(-1)^{k-1}(1-u)^n + (-1)^{n-k+1}(1-u)^{n-k+1}\}}{\prod_{j\notin I}(1-u^{\frac{1}{r_i}})} \\
&=\sum_{k=2}^{n+1}H_k \sum_{|I|=k}\prod_{j\notin I}G_{r_j} = \sum_{k=2}^{n+1}H_k \sum_{|J|\geq k}\binom{|J|}{k}F_{\hat{J}} \\
&= \sum_{2\leq |J|\leq n+1}P_{|J|}F_{\hat{J}}.
\end{split}
\end{equation*}
Now suppose the $\{r_i\}$ are pairwise coprime; then we can determine $\tilde{\sigma}_f$ (equiv., the form of the MHS on $V_f$) as follows. One easily shows that no two exponents occurring in the $\{F_{\hat{J}}\}$ differ by an integer. Hence the rank $\mu^0_f$ of the primitive vanishing cohomology $P_f$ (i.e. $\text{coker}(N)$) is bounded below by $c:=\sum_{\ell=2}^{n+1}\sum_{|J|=\ell}\prod_{j\notin J}(r_j - 1)$, with equality if and only if
\begin{equation} \label{eq5.2*}
V_f \otimes \RR \cong W_{n-1}(-1) \oplus \bigoplus_{\ell=2}^{n} \mathscr{P}_{n-\ell}\otimes W_{\ell-2}(-1)
\end{equation}
($W_k = \oplus_{j=0}^k \RR(-j)$ the $N$-string of length $k$, and $\mathscr{P}_k$ of weight and level $k$). On the other hand, $(f,\{\partial_{z_i} f\}_i)$ contains $\{z_i^{r_i}\}_i$, $\prod_{i=1}^{n+1} z_i$, and $\{\prod_{j\neq i} z_j \}_i$, so that $\mu^0_f \leq c$, yielding \eqref{eq5.2*}.
\end{example}

\begin{rem}\label{rem5.12}
The computer algebra system SINGULAR may be used to calculate weighted spectra of \emph{individual} polynomials defining isolated singularities.  The ``sppairs'' procedure implements an algorithm of Schulze \cite{Schulze} computing the action of $t$ on $\Omega_f$.  One should be aware of the different normalization when using this system:  an ``sppairs'' output of $(\hat{\alpha},\hat{w})$ (i.e., the Steenbrink spectrum) means $\left( (\alpha,w)=\right) (n-\hat{\alpha},\hat{w}+\langle\hat{\alpha}\rangle)$ in our notation.
\end{rem}

\section{Sebastiani--Thom formula\label{S2.6}}
The purpose of this section is to review the so-called Sebastiani--Thom formulas from the perspective of our paper. Namely, once one understands the vanishing cohomology and spectrum of lower dimensional hypersurface singularities $V(f)$ (say by using $\S$\ref{S2.2} and $\S$\ref{S2.5}), one can produce interesting new higher dimensional singularities by considering the ``join'' $V(f(\underline{x})+g(\underline{y}))$ where $f$ and $g$ are in distinct sets of variables. The topology of the resulting hypersurface was first studied by  Sebastiani--Thom \cite{ST}. This was then enhanced at the level of MHS and spectrum by Scherk--Steenbrink \cite{ScSt}. In the interest of completeness and coherence, we give a brief proof of the  Sebastiani--Thom (ST) formula, and  correct in passing some inaccuracies in \cite{ScSt} (see Remark \ref{rem-SSerr} below). Returning to the theme of our paper, we then note that ST formula can be used to produce examples of $k$-log canonical singularities via double-suspensions (cf. \S\ref{sec-ex-klog} below). 

As in $\S$\ref{S2.5}, we shall take $f\colon \CC^{m+1}\to \CC$ and $g\colon \CC^{n+1}\to \CC$ to be polynomials with isolated singularities at $\uo$.  Let $\VV_f := \{\tilde{H}^m(\mathfrak{F}_{f,s},\ZZ)\}_{s\neq 0}$ and $\VV_g :=\{ \tilde{H}^n(\mathfrak{F}_{g,t},\ZZ)\}_{t\neq 0}$ be the corresponding local systems on $\disc^*$, and $\tilde{\sigma}_f ,\tilde{\sigma}_g \in \ZZ[\QQ\times \ZZ]$ be the weighted spectra. We are interested in the weighted spectrum of the \emph{join} $f\oplus g\colon\CC^{m+n+2}\to \CC^2$ of $f$ and $g$, given by $(f\oplus g)(z_0,\ldots,z_{m+n+1}):= f(z_0,\ldots,z_m)+g(z_{m+1},\ldots ,z_{m+n+1})$.

Define a binary operation on $\ZZ[\QQ\times\ZZ]$ by bilinear extension of
\begin{equation}\label{eq6.1}
(\alpha,w)*(\beta,\omega):=(\alpha+\beta,w+\omega+\langle\alpha\mid \beta \rangle )\;,
\end{equation}
where
\begin{equation}\label{eq6.2}
\langle\alpha\mid\beta\rangle := 1+\langle \alpha+\beta\rangle -\langle\alpha\rangle - \langle \beta \rangle = \left\{ \begin{array}{cc} 0 & \alpha\text{ or }\beta\in \ZZ \\ 1 & \alpha,\beta,\alpha+\beta\notin \ZZ \\ 2 & \alpha,\beta\notin\ZZ,\, \alpha+\beta\in\ZZ. \end{array} \right.
\end{equation}
The main result of this section will be the
\begin{thm}[ST formula] \label{t6}
$\tilde{\sigma}_{f\oplus g}= \tilde{\sigma}_f * \tilde{\sigma}_g .$
\end{thm}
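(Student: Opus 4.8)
The plan is to combine the classical (topological) Sebastiani--Thom theorem, which identifies the vanishing cohomology and monodromy of a join, with the Scherk--Steenbrink refinement controlling the Hodge filtration (equivalently, the $\cv$-filtered Brieskorn lattice of \S\ref{S5.2}), and then to reduce the statement to a short bookkeeping computation with the centering of monodromy weight filtrations. First I would recall that the Milnor fibre of $f\oplus g$ at $\uo$ is homotopy equivalent to the join of the Milnor fibres $\mathfrak{F}_{f,s}$ and $\mathfrak{F}_{g,t}$; by the join formula (a shifted K\"unneth), $\VV_{f\oplus g}\cong \VV_f\otimes\VV_g$ as local systems on $\disc^*$, compatibly with monodromy, so $T_{f\oplus g}=T_f\otimes T_g$. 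Taking Jordan decompositions gives $T^{\text{ss}}_{f\oplus g}=T^{\text{ss}}_f\otimes T^{\text{ss}}_g$ and $N_{f\oplus g}=N_f\otimes\mathrm{id}+\mathrm{id}\otimes N_g$; since $T^{\text{ss}}_f\otimes T^{\text{ss}}_g$ has eigenvalue $\mathsf{e}(\lambda_1+\lambda_2)$ on $V_{f,\lambda_1}\otimes V_{g,\lambda_2}$, its $\mathsf{e}(\lambda)$-eigenspace is the direct sum, over $\lambda_1,\lambda_2\in[0,1)\cap\QQ$ with $\lambda_1+\lambda_2\equiv\lambda\ (\mathrm{mod}\ \ZZ)$, of the $N_{f\oplus g}$-invariant subspaces $V_{f,\lambda_1}\otimes V_{g,\lambda_2}$.

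\textbf{Hodge/spectral-number input.} Next I would invoke the Thom--Sebastiani theorem for the Hodge filtration in the form of \S\ref{S5.2}: under the obvious isomorphism $\Omega_{f\oplus g}\cong\Omega_f\otimes_\CC\Omega_g$ of Milnor algebras, the $\cv$-filtration on $\Omega_{f\oplus g}$ is --- up to the normalization shift by $1$ that makes spectral numbers add --- the convolution of the $\cv$-filtrations on the two factors (this is the content of \cite{ScSt}; in the convenient nondegenerate case one may alternatively use Steenbrink's Newton-filtration computation of \S\ref{S5.2}, since $f\oplus g$ remains convenient and nondegenerate with Newton polytope determined by those of $f$ and $g$), while multiplication by $f\oplus g$ corresponds to $(f\cdot)\otimes\mathrm{id}+\mathrm{id}\otimes(g\cdot)$ on $\Omega_f\otimes\Omega_g$. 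By Theorem \ref{t5.2a} this yields, on the one hand, that spectral numbers add, $m_\gamma(f\oplus g)=\sum_{\alpha+\beta=\gamma}m_\alpha(f)\,m_\beta(g)$ --- equivalently that $V_{f,\lambda_1}^{p_1,q_1}\otimes V_{g,\lambda_2}^{p_2,q_2}$ contributes to $\gr_F^p V_{f\oplus g,\lambda}$ with $p+\lambda=(p_1+\lambda_1)+(p_2+\lambda_2)$ --- and, on the other, via part (c) and the Clebsch--Gordan rule for a nilpotent operator of the shape $A\otimes\mathrm{id}+\mathrm{id}\otimes B$, that the Jordan type of $N_{f\oplus g}$ on $V_{f\oplus g}$ is the tensor of those of $N_f$ and $N_g$, as found in the first step.

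\textbf{Weight bookkeeping (the main point).} It remains to pin down the $w$-coordinate, i.e. the weight of a Hodge--Deligne piece $V_{f,\lambda_1}^{p_1,q_1}\otimes V_{g,\lambda_2}^{p_2,q_2}$ of $V_{f\oplus g}$. By Prop. \ref{prop1.2} the weight filtration on $V_{f,\lambda_1}$ is the monodromy weight filtration of $N_f$ centered at $c_1:=m$ if $\lambda_1\neq 0$ and at $m+1$ if $\lambda_1=0$; likewise with $n$ in place of $m$ for $g$, and with $m+n+1$ resp. $m+n+2$ for $f\oplus g$ (call this center $c$). Since the monodromy weight filtration of a nilpotent operator respects direct-sum decompositions into invariant subspaces, the weight filtration of $V_{f\oplus g,\lambda}$ restricts on the summand $V_{f,\lambda_1}\otimes V_{g,\lambda_2}$ to the monodromy weight filtration of $N_{f,\lambda_1}\otimes\mathrm{id}+\mathrm{id}\otimes N_{g,\lambda_2}$ centered at $c$; by the elementary lemma identifying the monodromy weight filtration of $A\otimes\mathrm{id}+\mathrm{id}\otimes B$ centered at $c_1+c_2$ with the convolution of the weight filtrations of $A$ and $B$, this is the convolution of the factor filtrations re-centered from $c_1+c_2$ to $c$. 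Hence the weight of $V_{f,\lambda_1}^{p_1,q_1}\otimes V_{g,\lambda_2}^{p_2,q_2}$ in $V_{f\oplus g}$ is $(p_1+q_1)+(p_2+q_2)+(c-c_1-c_2)$. A short case check --- (a) one of $\alpha_1:=p_1+\lambda_1$, $\alpha_2:=p_2+\lambda_2$ lies in $\ZZ$; (b) neither lies in $\ZZ$ but $\alpha_1+\alpha_2\notin\ZZ$; (c) neither lies in $\ZZ$ and $\alpha_1+\alpha_2\in\ZZ$ --- gives $c-c_1-c_2=0,1,2$ respectively, i.e. $c-c_1-c_2=\langle\alpha_1\mid\alpha_2\rangle$ as in \eqref{eq6.2}. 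Combining with the previous step, the terms $[(p_1+\lambda_1,\,p_1+q_1)]$ of $\tilde\sigma_f$ and $[(p_2+\lambda_2,\,p_2+q_2)]$ of $\tilde\sigma_g$ together contribute $[(\,(p_1+\lambda_1)+(p_2+\lambda_2),\,(p_1+q_1)+(p_2+q_2)+\langle\alpha_1\mid\alpha_2\rangle)]$ to $\tilde\sigma_{f\oplus g}$, which is precisely \eqref{eq6.1}; summing over all Hodge--Deligne pieces yields $\tilde\sigma_{f\oplus g}=\tilde\sigma_f*\tilde\sigma_g$, and as a sanity check one verifies this is $\iota_{m+n+1}$-symmetric given that $\tilde\sigma_f$ and $\tilde\sigma_g$ are $\iota_m$- resp. $\iota_n$-symmetric.

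\textbf{Main obstacle.} The delicate part is the weight bookkeeping: the topological isomorphism is only one of $\langle T\rangle$-modules, so the mixed Hodge structure on $V_{f\oplus g}$ --- and in particular how its weight centering varies across the $T^{\text{ss}}$-eigenspaces --- must be tracked separately, and the three-case evaluation of $c-c_1-c_2=\langle\alpha\mid\beta\rangle$ must be carried out with the correct conventions (e.g. for how an $A_1$-type, $\lambda=\tfrac12$, factor contributes). This is exactly where \cite{ScSt} (and Kulikov \cite{Kul}) require correction, and pinning \eqref{eq6.2} down exactly --- rather than being off by a Tate twist in some of the cases --- is the whole content of the theorem.
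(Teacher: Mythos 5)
Your proposal is correct, but it takes a genuinely different route from the paper's. You treat the theorem as three independent inputs glued by linear algebra: the topological join formula ($V_{f\oplus g}\cong V_f\otimes V_g$ with $T_{f\oplus g}=T_f\otimes T_g$), the Scherk--Steenbrink/Saito Thom--Sebastiani theorem for the $\cv$-filtered Brieskorn lattice (giving the Hodge filtration of the join as the shift-twisted tensor-product filtration under that isomorphism), and the centering of the relative monodromy weight filtrations from Prop.~\ref{prop1.2}(ii) applied to $f$, $g$ and $f\oplus g$, combined via the standard lemma that the monodromy weight filtration of $N_1\otimes\mathrm{id}+\mathrm{id}\otimes N_2$ centered at $c_1+c_2$ is the convolution of the factors' filtrations; your three-case evaluation of $c-c_1-c_2$ does reproduce \eqref{eq6.2} exactly, including the case $\alpha\in\ZZ$, $\beta\notin\ZZ$ where \cite{ScSt} errs. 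The paper instead rebuilds the MHS on $V_{f\oplus g}$ from scratch in the mixed Hodge module formalism: Verdier specialization reduces to the mixed nilpotent orbit of $V_f\otimes V_g$ over $\Delta^2$, a blow-up reduces the computation to $H^1(\bar{D},\jmath_!^{0,\infty}R\jmath_*^1\VV)$ on $\PP^1$, and the weight supplements are read off from $H^0/H^1$ of rank-one Deligne extensions. As to what each approach buys: yours is shorter and more elementary, but it outsources the hardest analytic input to the very reference whose Theorem 8.11 this section is correcting, and it is essential that you invoke the \emph{subspace-level} form of that input (the Hodge filtration of the join \emph{is} the tensor-product filtration, compatibly with the topological isomorphism), as you do --- the merely numerical statement that spectral numbers add would not suffice, since the $\gr_F$-dimensions and $\gr^W$-dimensions of a MHS do not jointly determine its Hodge--Deligne numbers (one $N$-string of length $2$ plus one of length $0$ with total $F$-degrees $\{3,2,1,0\}$ already admits two consistent diamonds). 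The paper's proof is independent of \cite{ScSt} and, via the Lemma in $\S$\ref{S7.2}, serves as the template for the non-isolated SSS formula of Theorem \ref{th-sss}, which your route does not obviously reach.
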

\begin{rem}\label{rem-SSerr}
A similar result of Scherk and Steenbrink (cf. \cite[Thm. 8.11]{ScSt} or \cite[Thm. 8.7.8]{Kul}) is incorrect in the case when $\alpha\in\ZZ$ and $\beta \notin\ZZ$ or vice versa (where they would have $\langle \alpha \mid \beta \rangle =1$). On the other hand, the more general result of Steenbrink and Nemethi \cite[Thm. 11.2]{StN} specializes to the correct statement in this situation. \end{rem}

In the original (topological) version due to Sebastiani and Thom \cite{ST}, the Theorem simply says that 
\begin{equation}\label{eq6.3}
H^{n+m+1}(\mathfrak{F}_{f\oplus g})\cong \tilde{H}^m(\mathfrak{F}_f)\otimes \tilde{H}^n(\mathfrak{F}_g)\, ,
\end{equation}
with monodromy given by $T_f \otimes T_g$.  Indeed, $\mathfrak{F}_{f\oplus g,\epsilon}\overset{\pi_{\mathsf{D}}}{\longrightarrow}\mathsf{D}$ is the restriction of $f\boxtimes g\colon \CC^{m+n+2}\to \CC^2$ to a disk $\mathsf{D}\subset \{s+t=\epsilon\}\subset \CC^2$ intersecting the axes in $p_1,p_2$.  Writing $j^{\cdots}\colon \mathsf{D}^{\setminus \cdots} \hookrightarrow \mathsf{D}$ and $\mathsf{V}_{f,g} := \left. \VV_f \otimes \VV_g \right|_{\mathsf{D}^{\setminus \{p_1,p_2\}}}$, since $\tilde{H}^m(\mathfrak{F}_{f,s})$ [resp. $\tilde{H}^n(\mathfrak{F}_{g,t})$] actually \emph{vanishes} at $s=0$ [resp. $t=0$], we have
\begin{equation}\label{eq6.4}
R\pi^{\mathsf{D}}_* \ZZ \simeq \jmath_!^{p_1,p_2} \mathsf{V}_{f,g}[-n-m] \oplus \jmath^{p_1}_! \VV_f [-m] \oplus \jmath_!^{p_2} \VV_g [-n] \oplus \ZZ.
\end{equation}
Applying $\HH^{n+m+1}(\mathsf{D},\text{---})$ to \eqref{eq6.4} gives $H^{n+m+1}(\mathfrak{F}_{f\oplus g,\epsilon},\ZZ)\cong H^1(\mathsf{D},\jmath_!^{p_1,p_2}\mathsf{V}_{f,g})$, which is isomorphic to a fiber $v$ of $\mathsf{V}_{f,g}$ by Mayer-Vietoris.

Owing to the lack of a canonical MHS on $H^1(\mathsf{D},\mathsf{V}_{f,g})$, in order to recover $\tilde{\sigma}_{f\oplus g}$ we must both \textbf{(a)} replace $\mathsf{D}$ by the exceptional divisor $\bar{D}\cong \PP^1$ of the blow-up $\rho \colon \widetilde{\CC^2}\twoheadrightarrow \CC^2$ at $\uo$, and \textbf{(b)} promote the local system $\mathsf{V}_{f,g}$ to a mixed Hodge module $\jmath_!^{0,\infty} R j_*^1 \VV$ on $\bar{D}$.  Its underlying variation of MHS $\VV$ over $D = {\PP^1 }^{\setminus \{0,1,\infty\} }$ has fibers isomorphic to $V=V_f\otimes V_g$; see the proof below.  Although it may look more opaque, the heuristic idea of how a weight-graded piece $U=\gr^W_{w+\omega} V$ contributes to weights of $V_{f\oplus g}\cong \HH^1(\bar{D},\jmath_!^{0,\infty} R j_*^1 \VV) \cong H^1(\PP^1\setminus \{1\},\{0,\infty\};\VV)$ is uncomplicated.  The monodromies about $0$ and $\infty$ are given by the (finite-order) semisimple parts $\mathsf{T}_0 = T^{\text{ss}}_g$ and $\mathsf{T}_{\infty} = T^{\text{ss}}_f$.  These produce a filtration
\begin{equation}\label{eq6.5}
U^{T_0} + U^{T_{\infty}} \subset (\mathsf{T}_1 - I)U + U^{\mathsf{T}_0} + U^{\mathsf{T}_{\infty}} \subset U
\end{equation}
whose graded pieces contribute to $\gr^W_{w+\omega}$, $\gr^W_{w+\omega+1}$, resp. $\gr^W_{w+\omega+2}$ of $V_{f\oplus g}$.  The reader will readily observe that the $\gr_{0,1,2}$ of \eqref{eq6.5} match the three cases of \eqref{eq6.2}.

\begin{proof}[Proof of Theorem \ref{t6}]
The following notion of \emph{mixed (quasi-)nilpotent orbit} will be used in what follows.  Let $(H,F^{\bullet},W_{\bullet}, T_1,T_2)$ be a MHS with two automorphisms.  Then we define a VMHS $H^{\text{nilp}}$ on $\disc^*_s \times \disc^*_t$ (with local system monodromies $T_i$) by $F_{s,t}^{\bullet}:= T_1^{-\ell(s)}T_2^{-\ell(t)}F^{\bullet}$ together with \emph{the same} $W_{\bullet}$.  Conversely, given an admissible VMHS $\mathscr{H}$ on $\disc^* \times \disc^*$, we write $\mathscr{H}_{\lm} = \psi_s \psi_t \mathscr{H}$; note that $(H^{\text{nilp}})_{\lm}\cong H$.

$\underline{\text{Step 1}}$: \emph{Reduction to the mixed nilpotent orbit of $V_f \otimes V_g$}.
Let $\pi_f \colon \cx_f \to \disc_s$ and $\pi_g \colon \cx_g \to \disc_t$ be projective fibrations with unique singularity at $p_f$ resp. $p_g$, locally analytically isomorphic to $f$ resp. $g$.  By \cite{Scherk} we may assume the canonical maps $H^m_{\lm}(X_{f,s})\to V_f$ and $H^n_{\lm}(X_{g,t})\to V_g$ are surjective. Write $p=(p_f,p_g)\underset{\imath_p}{\hookrightarrow} \cx = \cx_f \times \cx_g$, $\pi = \pi_f \boxtimes \pi_g \colon \cx \to \disc_{s,t}^2$, $\varepsilon = s+t \colon \disc^2_{s,t}\to \disc_z$, $\mathbf{H}:= R\pi_*\QQ_{\cx}[n+m+2]$, and $\mathbf{H}^{i}:=R^{i}\pi_*\mathbb{Q}_{\cx}[-i+m+n+2]$. We aim to compute
\begin{equation} \label{eq6.6}
V_{f\oplus g} :=\imath_p^*\, \pphi_{\varepsilon \circ \pi}\QQ_{\cx}[m+n+2] \cong \imath_{\uo}^* \,\pphi_{s+t}\mathbf{H}\cong \imath_{\uo}^* \,\pphi_{s+t}\mathbf{H}^{m+n},
\end{equation}
where the last isomorphism follows essentially from \eqref{eq6.4}.\footnote{The underlying (shifted) perverse sheaves for $\{\mathbf{H}^i\}_{i\neq m+n}$ are locally constant at $\uo$ in $s$ or $t$. By a change of coordinates we can thus view them as locally constant in $s+t$, whence applying $\pphi_{s+t}$ yields zero.}  Here $\mathbf{H}^{m+n}\in\mathrm{MHM}(\Delta^2)$, and we write $\mathbb{H}^{m+n}$ for the underlying perverse sheaf.

Let $\jmath \colon (\disc^*)^2 \hookrightarrow \disc^2$ denote the inclusion, $\mathscr{H}^{m+n}:=\jmath^*\mathbf{H}^{m+n}[-2]$ (a VHS on $(\Delta^*)^2$), and $T_1:=T_f\otimes I$, $T_2:=I\otimes T_g$ the monodromy operators on $\mathbf{H}^{m+n}$ (and $\mathscr{H}^{m+n}_{\text{lim}}$). Writing $(-)^T$ to denote $\ker(T_1-I)+\ker(T_2-I)$, we have short-exact sequences of MHS
\begin{equation}\label{eq6.6i}
0\to (\mathscr{H}^{m+n}_{\text{lim}})^T \to \mathscr{H}^{m+n}_{\text{lim}}\to V_f\otimes V_g\to 0
\end{equation}
and of perverse sheaves on $\Delta^2$
\begin{equation}\label{eq6.6ii}
0\to (\mathbb{H}^{m+n})^T \to \mathbb{H}^{m+n}\to \jmath_!(\VV_f\otimes \VV_g)[2]\to 0.
\end{equation}

Now by an argument of M. Saito \cite[p.~713]{Sa-steen} we may replace $\mathbf{H}^{m+n}$ by its \emph{Verdier specialization} $\hat{\mathbf{H}}$ at $\uo$ [op.~cit., p.~708].  This is an object of $\mathrm{MHM}(\CC^2)$ with\footnote{We use $\jmath$ also for the inclusion $(\CC^*)^2\hookrightarrow \CC^2$, as well as $s,t$ for coordinates.}
$\jmath^*\hat{\mathbf{H}}\cong (\mathscr{H}_{\text{lim}}^{m+n})^{\text{nilp}}[2]$, constructed via deformation to the normal cone $C_{\uo}\Delta^2\cong \CC^2$.  Writing $\hat{\mathbf{V}}:=\hat{\mathbf{H}}/(\hat{\mathbf{H}})^T$, we have $\jmath^*\hat{\mathbf{V}}\cong (V_f\otimes V_g)^{\text{nilp}}[2]$ by [op.~cit., (3.8)] and \eqref{eq6.6i}.  Moreover, since the Verdier specialization at $\uo$ commutes with restriction to the coordinate axes, \eqref{eq6.6ii} implies
\begin{equation}\label{eq6.6iii}
\hat{\mathbf{V}}\cong\jmath_!(\mathbb{V}_f\otimes \mathbb{V}_g)[2]
\end{equation}
as a perverse sheaf.\footnote{We may view $\VV_f\otimes \VV_g$ as a local system on $(\CC^*)^2$ in the obvious way.} It follows that in $\mathrm{MHM}(\CC^2)$ we have
\begin{equation}\label{eq6.6iv}
\hat{\mathbf{V}}\cong \jmath_!(V_f\otimes V_g)^{\text{nilp}}[2].
\end{equation}
Finally, $(\hat{\mathbf{H}})^T$ is sum of objects constant in $s$ and $t$, which (via change of coordinates) we may view as constant in $s+t$.  It follows that $\pphi_{s+t}(\hat{\mathbf{H}})^T=\{0\}$, and so
\begin{equation}\label{eq6.7}
V_{f\oplus g} \cong \imath_{\uo}^* \,\pphi_{s+t} \hat{\mathbf{H}} \cong \imath^*_{\uo} \,\pphi_{s+t} \hat{\mathbf{V}}.
\end{equation}

$\underline{\text{Step 2}}$:  \emph{Reduction to $H^1$ of a MHM on $\PP^1$}.
By \eqref{eq6.6iv}, $\hat{\mathbf{V}}$ has trivial ``stalk'' at $\uo$, and so its pullback $\tilde{\mathbf{V}}:=\rho^* \hat{\mathbf{V}}$ under the blow-up satisfies $R\rho_* \tilde{\mathbf{V}} \cong \hat{\mathbf{V}}$ and $\imath_{\bar{D}}^* \tilde{\mathbf{V}}=0$.  Writing $u:=\rho^*(s+t)$, and $C_1$ [resp. $C_0$, $C_{\infty}$] for the proper transform of $s+t=0$ [resp. $t=0$, $s=0$], we have $(u)=\bar{D}+C_1$.  Hence $\pphi_u \tilde{\mathbf{V}}$ has support in $\bar{D}\cup C_1$ (but is in fact $0$ off $\bar{D}$), and 
\begin{equation}\label{eq6.8}
V_{f\oplus g} \cong \imath_{\uo}^* R \rho_* \pphi_u \tilde{\mathbf{V}} \cong R \Gamma_{\bar{D}}\imath_{\bar{D}}^* \pphi_u \tilde{\mathbf{V}} \cong R\Gamma_{\bar{D}}\imath^*_{\bar{D}}\ppsi_u\tilde{\mathbf{V}}
\end{equation}
with the last isomorphism because $\imath_{\bar{D}}^* \tilde{\mathbf{V}}=0$.

Let $w$ be a coordinate on $\bar{D}\cong \PP^1$ with $w|_{\bar{D}\cap C_{\alpha}}=\alpha$, and write (by abuse of notation) $\jmath^S \colon \bar{D}^{\setminus S\cup S'} \hookrightarrow \bar{D}^{\setminus S'}$ for any $S,S'\subset \bar{D}$ finite.  By \cite[Lemma 4.7]{Sa-steen}, $\imath_{\bar{D}}^* \ppsi_u \tilde{\mathbf{V}} \cong R \jmath_*^1 \imath_{\bar{D}^{\setminus \{1\} }}^* \ppsi_u \tilde{\mathbf{V}}$; and evidently $\imath^*_{\bar{D}^{\setminus\{1\}}} \ppsi_u \tilde{\mathbf{V}}\cong \jmath^{0,\infty}_! \imath_D^* \ppsi_u \tilde{\mathbf{V}}$, where $\mathscr{V} := (\imath^*_D \ppsi_u \tilde{\mathbf{V}})[-1]$ is a VMHS with fiber $V_f \otimes V_g$ and monodromies $I\otimes T_g$, $T_f^{-1}\otimes T_g^{-1}$, $T_f\otimes I$ about $w=0,1,\infty$ respectively. Conclude that 
\begin{equation}\label{eq6.9}
V_{f\oplus g}\cong R\Gamma_{\bar{D}}\jmath_!^{0,\infty} R \jmath_*^1 \mathscr{V}[1] \cong H^1(\bar{D},\jmath_!^{0,\infty} R\jmath_*^1 \mathscr{V}).
\end{equation}

$\underline{\text{Step 3}}$:  \emph{Calculation of the $(p,q)$-types}.
For this purpose we may replace $\mathscr{V}$ in \eqref{eq6.9} by $\gr^W \mathscr{V}$, and thus $T_f ,T_g$ by $T_f^{\text{ss}},T_g^{\text{ss}}$.  This yields a decomposition into rank-1 subobjects
\begin{equation}\label{eq6.10}
\gr^W \mathscr{V}_{\CC} \cong \oplus (\underset{\chi^{\alpha,\beta}_{w,\omega}}{\underbrace{\chi_w^{\alpha}\otimes \chi^{\beta}_{\omega}}})^{\oplus m_{\alpha,w}m_{\beta,\omega}}
\end{equation}
with Hodge type $(p_0,q_0)=\left(\lfloor\alpha\rfloor + \lfloor \beta\rfloor,w+\omega-(\lfloor \alpha\rfloor + \lfloor\beta \rfloor)\right)$ and monodromies $\mathbf{e}(\beta),\mathbf{e}(-\alpha-\beta),\mathbf{e}(\alpha)$ about $0,1,\infty$.

Write $\lambda_{\alpha}=\{\alpha\},\, \lambda_{\beta}=\{\beta\}\in [0,1)$, and $\Sigma = \{0,1,\infty\}$.  We must compute the $(p,q)$-types of the $H^1(\bar{D},\jmath^{0,\infty}_! R\jmath_*^1 \chi^{\alpha,\beta}_{w,\omega})$, or equivalently the ``supplement'' $(p-p_0,q-q_0)$ given by the type of 
\begin{equation}\label{eq6.11}
V_{\lambda_{\alpha},\lambda_{\beta}} := H^1(\bar{D},\jmath_!^{0,\infty} R \jmath_*^1 \chi^{\lambda_{\alpha},\lambda_{\beta}}_{0,0})\cong \HH^1(\bar{D},\Omega^{\bullet}\langle\Sigma\rangle\otimes \mathcal{L}).
\end{equation}
Here $(\mathcal{L},\nabla)$ is the extension of the line bundle underlying $\chi^{\lambda_{\alpha},\lambda_{\beta}}_{0,0}$ to $\bar{D}$ in such a way that the eigenvalues of $\mathrm{Res}(\nabla)$ are in $[0,1)$ at $w=1$ and $(0,1]$ at $w=0,\infty$, while being $\equiv -\lambda_{\alpha},\lambda_{\alpha}+\lambda_{\beta},-\lambda_{\beta}$ mod $\ZZ$ at $0,1,\infty$.  Heuristically we may write this as\footnote{as a $\mathcal{D}$-module, i.e. keeping track of the (fractional) residues of $\nabla$} $\mathcal{L}\cong \co_{\bar{D}}(-(1-\lambda_{\beta})[0]-\{\lambda_{\alpha}+\lambda_{\beta}\}[1]-(1-\lambda_{\alpha})[\infty])$; since $\{\lambda_{\alpha}+\lambda_{\beta}\}=\lambda_{\alpha}+\lambda_{\beta}-\epsilon$ with $\epsilon = 0$ or $1$, we obtain\footnote{as a line bundle} $\mathcal{L}\cong \co_{\bar{D}}(\epsilon -2)=\co_{\bar{D}}(-2)$ resp. $\co_{\bar{D}}(-1)$.  Accordingly, \eqref{eq6.11} becomes
\begin{equation}\label{eq6.12}
\left\{ 
\begin{array}{cc}
\mathbf{(i)} & H^1(\co_{\bar{D}}(\mathcal{L}))\cong H^1\left(\co_{\bar{D}}(-(1-\lambda_{\beta})[0]-(\lambda_{\alpha}+\lambda_{\beta})[1]-(1-\lambda_{\alpha})[\infty])\right) \\
\;\;\text{resp.} & \\
\mathbf{(ii)} & H^0(\Omega^1_{\bar{D}}\langle\Sigma\rangle(\mathcal{L}))\cong H^0\left( \Omega^1_{\bar{D}}(\lambda_{\beta}[0]+(1-\lambda_{\alpha}-\lambda_{\beta})[1]+\lambda_{\alpha}[\infty]) \right).
\end{array}
\right.
\end{equation}
In case \textbf{(i)}, we have $\lambda_{\alpha}+\lambda_{\beta}<1$, and the space has Hodge type $(0,1)$ unless $\lambda_{\alpha}$ or $\lambda_{\beta} =0$, in which case the type is $(0,0)$ (because \textbf{(i)} will then be the image of $H^0(\co_p)$ for $p=0$ or $\infty$).  In case \textbf{(ii)}, we have $\lambda_{\alpha}+\lambda_{\beta}\geq 1$, and $\lambda_{\alpha},\lambda_{\beta}>0$.  If $\lambda_{\alpha}+\lambda_{\beta}=1$ then \textbf{(ii)} is isomorphic to $H^0(\co_{\{1\}})(-1)$ by a residue map, and the type is $(1,1)$; otherwise, it is $(1,0)$.  This matches up with the rules \eqref{eq6.1}-\eqref{eq6.2} and completes the proof.
\end{proof}

\subsection{Application to $k$-log-canonical singularities}\label{sec-ex-klog} Turning to a few simple applications, let $f\colon \CC^{m+1}\to \CC$ be as above, with $\tilde{\sigma}_f = \sum_{\alpha,w} m_{\alpha,w} [(\alpha,w)]$, and take $n=0$ and $g(z):=z^r$.  Writing $\Sigma f := f\oplus z^2$ [resp. $\Sigma_{_{(r)}} f := f \oplus z^r$] for the [\emph{generalized}] \emph{suspension} of $f$, and noting that $\tilde{\sigma}_{z^r} = \sum_{i=1}^{r-1} [(\tfrac{i}{r},0)]$, Theorem \ref{t6} gives
\begin{equation}\label{eq6.13}
\tilde{\sigma}_{\Sigma_{_{(r)}}f} = \sum_{\alpha,w}\sum_{i=1}^{r-1} m_{\alpha,w}[(\alpha+\tfrac{i}{r},w+\langle \alpha \mid \tfrac{i}{r}\rangle )]
\end{equation}
where $$\langle \alpha \mid \tfrac{i}{r}\rangle = \left\{ \begin{array}{cc} 0 & \alpha \in \ZZ \\ 2 & \alpha \in \ZZ - \tfrac{i}{r} \\ 1 & \text{otherwise} . \end{array} \right.$$
In particular, this yields $\sigma_{\Sigma_{_{(r)}}f}^{\text{min}} = \sigma_f^{\text{min}} + \tfrac{1}{r},$ so that (for isolated hypersurface singularities) we have:
\begin{itemize}
\item $\Sigma_{_{(r)}}$ (any $r$) sends log-canonical singularities to rational ones;
\item any double suspension $\Sigma^2 f$ gives a rational singularity; and
\item $\Sigma^2 := \Sigma \circ \Sigma$ sends $k$-log-canonical singularities to $(k+1)$-log-canonical ones.
\end{itemize}
There are also easy implications for cyclic base-change, related to some of the discussion in $\S\S$7-9 of Part I.  If $f\colon \cx \to \disc$ has an isolated singularity at a smooth point $p$ of $\cx$, and $\fx$ is its base-change by $t\mapsto t^r$, then:
\begin{itemize}
\item if $(X_0,p)$ is \emph{not} log-canonical, then $(\fx,p)$ is rational [resp. log-canonical] if and only if $r$ is less than [or equal to] $\tfrac{1}{1-\sigma_f^{\text{min}}}$; while
\item if $(X_0,p)$ \emph{is} ($k$-)log-canonical, then $(\fx,p)$ is ($k$-)rational (cf. Cor. \ref{cor2.4a}) for any $r$.
\end{itemize}
Finally, for double suspensions, there is the very simplest consequence of all:  since $\tilde{\sigma}_{x^2+y^2}= [(1,2)]$, Theorem \ref{t6} yields $$\tilde{\sigma}_{\Sigma^2 f} =\sum_{\alpha,w}m_{\alpha,w} [(\alpha+1,w+2)],$$ which is to say that
\begin{equation} \label{eq6.14}
V_{\Sigma^2 f} \cong V_f (-1)
\end{equation}
as MHS.
\begin{example}\label{ex6.1}
Together with Example \ref{ex5.1}, this gives for $F=z_1^4 z_2 + z_1 z_2^4 + z_1^2 z_2^2 + z_3 ^2 + z_4^2$
that
\[ \tilde{\sigma}_f = \underset{V_f^{1,1}}{\underbrace{[(\tfrac{3}{2},2)]}}+\underset{V_f^{1,2}}{\underbrace{2[(\tfrac{5}{3},3)]+2[(\tfrac{11}{6},3)]}}+\underset{V_f^{2,2}}{\underbrace{3[(2,4)]+[(\tfrac{5}{2},4)]}}+\underset{V_f^{2,1}}{\underbrace{2[(\tfrac{13}{6},3)]+2[(\tfrac{7}{3},3)]}}, \]
which justifies the assertion in Example I.9.10.
\end{example}

\begin{rem}
In the event that $|\sigma_f| \cap \tfrac{1}{2}\ZZ = \emptyset$, the vanishing cohomology $V_{\Sigma f}$ of the suspension may be regarded as a \emph{half-twist} of $V_f$ in the sense of van Geemen \cite{vG}.  (This is consistent with the full Tate-twist in \eqref{eq6.14}.)  Without loss of generality we may assume that all $\alpha \in |\sigma_f|$ have denominator $r$ when written in lowest form; then $V_f$ is (via the action of $T^{\text{ss}}$) a $\QQ(\zeta_{\ell})$-vector space.  On each $V_{f,\frac{k}{\ell}}$ in the decomposition $V_{f,\CC}=\oplus_{\lambda \in (0,1)}V_{f,\lambda}$, $\QQ(\zeta_{\ell})$ acts through one of its complex embeddings, and we choose the CM-type $\Phi$ on $\QQ(\zeta_{\ell})$ that makes $V_{f,\Phi}=\oplus_{\lambda\in (\frac{1}{2},1)} V_{f,\lambda}$ and $V_{f,\bar{\Phi}}=\oplus_{\lambda\in(0,\frac{1}{2})}V_{f,\lambda}$.  The corresponding half-twist is defined by $(V_f)^{p,q}_{\Phi,-\frac{1}{2}} = V_{f,\Phi}^{p-1,q}\oplus V_{f,\bar{\Phi}}^{p,q-1}$, and one readily checks that $V_{\Sigma f}\cong (V_f)_{\Phi,-\frac{1}{2}}$.
\end{rem}

\bibliography{csseq2}
 
\end{document}